\numberwithin{equation}{section} 
\newtheorem{introtheorem}{Theorem}
\theoremstyle{plain}
\newtheorem{theorem}{Theorem}[section]
\newtheorem{proposition}[theorem]{Proposition}
\newtheorem{lemma}[theorem]{Lemma}
\newtheorem{corollary}[theorem]{Corollary}
\theoremstyle{definition}
\newtheorem{remark}[theorem]{Remark}
\def\C{\mathbb{C}}
\def\R{\mathbb{R}}
\def\P{\mathbb{P}}
\def\Q{\mathbb{Q}}
\def\N{\mathbb{N}}
\def\L{\mathbb{L}}
\def\Z{\mathbb{Z}}
\def\D{\mathbb{D}}
\def\QS{\Q / \Z}
\def\CP2{{\mathbb{CP}^2}}
\DeclareMathAlphabet{\pazocal}{OMS}{zplm}{m}{n}
\def\ccS{{\mathcal{S}}}
\def\ccS{{\mathcal{S}}}
\def\cE{{\pazocal{E}}}
\def\cG{{\pazocal{G}}}
\def\cH{{\pazocal{H}}}
\def\cK{{\pazocal{K}}}
\def\cL{{\pazocal{L}}}
\def\cM{{\pazocal{M}}}
\def\cS{{\pazocal{S}}}
\def\cX{{\pazocal{X}}}
\def\oC{\overline{\C}}
\def\omegap{{\omega^\prime}}
\def\tvarphi{\tilde{\varphi}}
\def\trz{\tilde{z}}
\def\poneberk{\mathbb{P}^{1, {an}}}
\def\pone{\mathbb{P}^1}
\def\diam{\mathrm{diam}}
\newcommand{\val}[1]{\ensuremath{\left| #1 \right|_{\mathrm{o}}}}
\newcommand{\nor}[1]{\ensuremath{\left\| #1 \right\|}}
\def\per{\operatorname{Per}}
\def\ocS{\overline{\cS}}
\def\ord{\operatorname{ord}}
\def\dist{\operatorname{dist}}
\def\rd{\operatorname{red}}
\def\tgamma{{\tilde{\gamma}}}
\def\laurent{\C((t))}
\def\laurentm{\C((t^{1/m}))}
\def\puiseux{\C \langle\langle t \rangle\rangle }
\def\ponel{\P^1_\L}
\def\ponec{\P^1_\C}
\def\lval{\left|}
\def\rval{\right|_\mathrm{o}}
\begin{document}

\title[Euler Characteristic of Periodic Curves]{Asymptotics of Transversality
  in Periodic Curves of
  Quadratic Rational Maps}
\author{Jan Kiwi}
\thanks{The author acknowledges the support of ANID/FONDECYT Regular
1240508.} 
\address{Facultad de Matem\'aticas, Pontificia Universidad Cat\'olica de Chile} 
\email{jkiwi@uc.cl}

\maketitle
\begin{abstract}
  We compute the Euler characteristic of the moduli space of quadratic
  rational maps with a periodic marked critical point of a given period. 
\end{abstract}

%\tableofcontents
\section{Introduction}
% The dynamical properties of a rational map $f: \pone_\C \to \pone_\C$ are determined, to a large extent, by the behavior of its critical orbits.
% Therefore, it is natural to explore parameter spaces
% by analyzing maps with prescribed critical orbit behavior.
% Here we focus on quadratic rational maps.
% In this context, the families of quadratic maps having one periodic critical
% point of a given period have captured some attention in the literature.
% These families represent, in a certain sense, one of the simplest one-parameter
% families of rational maps yet not completely understood.
% We will be mainly concerned with the global topology of these families.

The dynamical properties of a rational map $f: \pone_\C \to \pone_\C$ are determined, to a large extent, by its critical orbits.
Therefore, parameter spaces are frequently explored by studying the
loci of maps  with a prescribed critical orbit behavior.
Here we focus on quadratic rational maps.
Specifically, our interest is on the moduli spaces $\cS_p$  of quadratic maps with a marked periodic critical point of period, for any $p \ge 1$.
These moduli spaces  are punctured Riemann surfaces, often called
\emph{periodic curves}.
Periodic curves and related spaces, including critical orbit relation curves  (COR curves), have received considerable
attention in the literature (see e.g. 
\cite{buff2022prefixed,demarco2015bifurcation,FirsovaKahnSelinger,HironakaKoch,MilnorQuadratic,RamadasSilversmithEquations,RamadasGleason,Stimson,ReesII,ReesAsterisque}). 
However, for general $p$, basic questions about the global topology of $\cS_p$ remain open: Is $\cS_p$ connected? If so,  what is the genus $g_p$ of $\cS_p$? 
What is the number of punctures $N_p$ of $\cS_p$?
Our main result is a formula for the Euler characteristic of $\cS_p$. In particular, provided that $\cS_p$ is connected, this establishes the equivalence 
between computing $g_p$ and computing $N_p$.
Our result arises from a precise control of the dynamical and parameter space
scales near the punctures, with the aid of non-Archimedean dynamics.

In order to state the main result, we need to introduce two numbers: $\eta'(p)$
and $\eta'_{II}(p)$. The first one, $\eta'(p)$,  is the number of parameters in the quadratic family $Q_c(z):=z^2 + c$ such that the critical point $z=0$  is periodic of exact period $p$. The second one, $\eta'_{II}(p)$, is the number
of parameters in $\cS_p$ such that the two critical points lie in the same
periodic orbit; that is, the number of \emph{type II or bitransitive centers} in $\cS_p$.
A formula for $\eta'(p)$ follows from well known results about the quadratic family, due to
Douady, Hubbard and, Gleason~\cite{OrsayNotes}. In~\cite{KiwiRees}, with M. Rees, we obtained a formula for $\eta'_{II}(p)$. These formulas are discussed in
\S \ref{counting-polynomial-ss} and \S \ref{counting-rational-ss}.

\begin{introtheorem}
  \label{A}
  Let $\chi(\cS_p)$ denote the Euler characteristic of $\cS_p$.
  Then, for all $p \ge 3$, 
  $$\chi(\cS_p) = \dfrac{ 2 \eta'(p)}{3} - \eta'_{II} (p).$$ 
\end{introtheorem}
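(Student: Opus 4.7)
My plan is to realize a natural triple cover $\widetilde{\cS}_p \to \cS_p$ as a branched cover of $\pone$ via a dynamically meaningful function, combine Riemann--Hurwitz with a non-Archimedean analysis of the punctures, and identify the resulting local corrections with $\eta'_{II}(p)$. The factor $2/3$ in the formula strongly suggests passing through the triple cover $\widetilde{\cS}_p \to \cS_p$ obtained by marking one of the three fixed points of $f$. On $\widetilde{\cS}_p$ the multiplier $\lambda$ of the marked fixed point gives a natural algebraic function $\lambda : \widetilde{\cS}_p \to \C$. Since for $p\ge 3$ the marked critical point is not fixed, the fiber $\lambda^{-1}(0)$ consists precisely of those maps where the \emph{second} critical point is the marked fixed point; super-attractingness then forces $f$ to be a polynomial with the marked fixed point at infinity, so $\lambda^{-1}(0)$ is in bijection with polynomial centers of period $p$ and $\deg \lambda = \eta'(p)$.

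Compactifying to $\overline{\lambda}: \overline{\widetilde{\cS}_p} \to \pone$ and applying Riemann--Hurwitz yields $\chi(\overline{\widetilde{\cS}_p}) = 2\eta'(p) - R$, where $R$ is the total ramification of $\overline{\lambda}$. Subtracting the number $N$ of punctures gives $\chi(\widetilde{\cS}_p) = 2\eta'(p) - R - N$. On the other hand, Riemann--Hurwitz applied to the triple cover $\widetilde{\cS}_p \to \cS_p$ gives $\chi(\widetilde{\cS}_p) = 3\chi(\cS_p) - R'$, where $R'$ measures ramification over parameters in $\cS_p$ at which two fixed points coincide (a finite parabolic locus). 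Comparing the two expressions, the theorem reduces to establishing the global-to-local identity $R + N - R' = 3\eta'_{II}(p)$.

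This identity is the heart of the proof, and this is where non-Archimedean dynamics enters. For each puncture of $\widetilde{\cS}_p$, a sequence of quadratic rational maps approaching it yields a quadratic map over the Puiseux series field $\puiseux$ whose marked critical point is still periodic of period $p$. The combinatorial data of the non-Archimedean limit -- its reduction, the associated Berkovich tree, and the joint scales of the critical orbit and of the fixed-point multiplier -- determines precisely how each puncture contributes both to $R$ (via the order of pole of $\lambda$) and to $N$. The bitransitive parameters, where both critical points lie in the same super-attracting cycle, should emerge as the unique global source of the net defect $3\eta'_{II}(p)$ once the local contributions at all punctures are matched against the count of hyperbolic components of $\cS_p$.

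The main obstacle is precisely this classification and accounting step: one must enumerate all non-Archimedean quadratic limits compatible with a period-$p$ marked critical point, compute how each contributes to $R$ and to $N$, and show that the combined local corrections collapse cleanly to $3\eta'_{II}(p)$. This is precisely the fine control of dynamical and parameter scales near the punctures announced in the abstract.
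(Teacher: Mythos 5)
Your proposal takes a genuinely different route from the paper. The paper introduces a meromorphic one-form $d\tau$ on $\widehat{\cS_p}$ built from $\partial_b\omega_p$, observes that the zeros of the phase-space derivative $G(a,b)=\frac{df^{\circ p-1}_{a,b}}{dz}(\infty)$ in $\cS_p$ are exactly the bitransitive centers (with multiplicity one, by transversality), and then uses the Main Lemma to relate the orders of $\partial_b\omega_p$ and $G$ at every puncture. Because $G$ is a global rational function on a compact curve, $\sum_x \ord_x G + \eta'_{II}(p) = 0$ supplies the global-to-local bridge for free, and the remaining bookkeeping is just B\'ezout for $\deg\ocS_p$. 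Your proposal instead builds a degree-$3$ fixed-point cover $\widetilde{\cS}_p\to\cS_p$, uses the multiplier $\lambda$ as an $\eta'(p)$-to-one map to $\pone$, and applies Riemann--Hurwitz twice. The arithmetic reducing Theorem~\ref{A} to $R + N - R' = 3\eta'_{II}(p)$ is correct, and the identification $\deg\overline{\lambda}=\eta'(p)$ is plausible since $\lambda$ is zero-free at the punctures (the limiting multipliers are $\{\infty,e^{\pm 2\pi i\theta}\}$) and the $\eta'(p)$ polynomial centers are simple zeros by Gleason.

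The genuine gap is the identity $R + N - R' = 3\eta'_{II}(p)$, and I do not think the route you propose can close it as stated. The quantity $\eta'_{II}(p)$ counts \emph{interior} points of $\cS_p$ (parameters at which the free critical point lands on the orbit of $\omega$), whereas your proposed mechanism -- ``enumerate all non-Archimedean quadratic limits\ldots\ and show the combined local corrections collapse cleanly to $3\eta'_{II}(p)$'' -- is entirely puncture-local. Puncture analysis alone cannot produce a count of interior points without a global function or differential that trades one for the other; in the paper's argument, the rational function $G$ plays exactly this role, because its interior zeros are tautologically the bitransitive centers. In your setup, the multiplier $\lambda$ has no manifest relation to the bitransitive centers: a bitransitive center has a super-attracting period-$p$ cycle, which says nothing about the derivative of $\lambda$ at a fixed point. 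Moreover $R$ is the \emph{total} ramification of $\overline{\lambda}$, which includes interior ramification over $1$ and other roots of unity (on the parabolic locus $\per_1^{cm}(1)\cap\cS_p$ and its cousins) and also ramification at punctures where $\lambda$ tends to a root of unity rather than to $\infty$; your proposal tracks only the order of pole at punctures. Finally, your ``Riemann--Hurwitz for the open triple cover'' needs care: the correct statement is $\chi(\overline{\widetilde{\cS}_p}) = 3\chi(\widehat{\cS_p}) - R'_{\mathrm{tot}}$ where $R'_{\mathrm{tot}}$ includes ramification over the punctures of $\cS_p$ (which do occur when the limiting multiplier pair is $\{-1,-1\}$, i.e.\ $q=2$), and then one must subtract $N$ and $N_p$ separately; these accountings do not obviously reduce to a single correction term $R'$ in the way you wrote. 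To rescue your approach you would need either (a) a proof that $\lambda$ is unramified over $\C^\times\setminus\{\text{roots of unity}\}$ on $\widetilde{\cS}_p$ together with an explicit calculation of the parabolic ramification, or (b) a different choice of globally defined function whose interior zeros \emph{are} the bitransitive centers -- which is precisely what the paper's $G$ supplies.
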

The curves $\cS_1$ and $\cS_2$ are isomorphic to $\C$ and $\C^* =\C\setminus \{0\}$, respectively.

Denote by $\widehat{\cS_p}$ the smooth compactification of $\cS_p$ obtained by
filling each puncture with an ideal point.  Then, for $p \ge 3$,
$$\chi(\widehat{\cS_p}) = N_p + \dfrac{ 2 \eta'(p)}{3} - \eta'_{II} (p).$$
As mentioned above, we lack of formulas and general methods to compute $N_p$.

There is a strong analogy between the study of cubic polynomials and
quadratic rational maps. In both cases, two critical points are relevant
in phase space and moduli spaces are complex two-dimensional.
However, substantially more is known about cubic polynomials.
In fact, with Arfeux, we proved that the moduli space $\ccS_p$ of cubic polynomials
with a marked periodic critical point, of period $p$, is
connected~\cite{arfeux2023irreducibility}. With Bonifant and Milnor, we obtained a formula for the Euler characteristic of $\ccS_p$ in~\cite{AKMCubic}. Schiff and De Marco~\cite{DeMarcoSchiff} produced an algorithm to determine the number of punctures of $\ccS_p$, but no general formula is known.

The strategy to compute the Euler characteristic of $\cS_p$ is similar to the one employed for cubic polynomials in~\cite{AKMCubic}.
It consists on computing the degree of a naturally defined meromorphic one-form $d \tau$ on $\widehat{\cS_p}$ with all its zeros and poles located at the punctures of $\cS_p$.
Near the punctures, elements of $\cS_p$ correspond to conjugacy classes
of maps close to infinity in the moduli space of all quadratic rational maps. 
Our computation of the order of $d \tau$ at the punctures of $\cS_p$
relies on the study of degenerate rational maps with tools from
non-Archimedean dynamics. We reinterpret the formulas for
the order of $d \tau$ at the punctures, obtained in \cite{AKMCubic}.
Our contribution is to show that the underlying principle is that the order at the punctures
of a  parameter space derivative 
and a dynamical space derivative agree,
modulo a minor correction factor (see Lemma~\ref{l-main}).
This may be interpreted as an equation for the asymptotics (at the punctures)
of the transversality responsible for the smoothness of $\cS_p$. 
It would be interesting to investigate
the extent up to which similar equations for the \emph{asymptotics of transversality} hold for general COR curves. 

% The non-Archimedean valuation in the field of
% Laurent series $\C((t))$ determined by the order at $t=0$ is
% specially convenient in this context.
% We profit from ~\cite{KiwiPuiseuxQuadratic} which contains a
% full description of the phase space of quadratic rational maps
% acting on the completion $\L$ of the algebraic closure of $\C((t))$.
% The formula relating the order of $d \tau$ and the order of $G$
% at a puncture is the Main Lemma~\ref{l-main} to prove Theorem~\ref{A}. 
The paper is organized as follows:

In \S \ref{s:preliminaries}, we introduce the basic notation and background
results regarding quadratic rational maps and the curves $\cS_p$.

In \S \ref{s:asymptotics}, we introduce the meromorphic one form used
to compute the Euler characteristic of $\cS_p$. 
Our Main Lemma \ref{l-main} is stated. However, its proof is postponed until \S \ref{s:puiseux}.
Assuming the Main Lemma, we prove Theorem~\ref{A}.

In \S \ref{s:parabolic}, we summarize some results
regarding the punctures of $\cS_p$. The
emphasis is on parabolic rescaling limits,
following the works of Stimson, Epstein, Rees, and De Marco~\cite{Stimson,EpsteinBounded,ReesAsterisque,DeMarcoQuadratic}.

In \S \ref{s:puiseux}, we prove the Main Lemma. We start translating
it into a lemma regarding quadratic rational dynamics over
the field of Puiseux series. To prove it,
we abuse of the detailed knowledge
of quadratic rational dynamics over this field obtained in \cite{KiwiPuiseuxQuadratic}. Dynamics over non-Archimedean fields is better understood
in the Berkovich projective line. However, since the Main Lemma involves
only quantities related to the (classical) projective line, for simplicity,
we have chosen to do most of our work without reference to Berkovich spaces.
Nevertheless, at some points, we have to translate results from
\cite{KiwiPuiseuxQuadratic}, written in the language of Berkovich spaces,
to our context. 
As a byproduct of the proof of the Main Lemma,
we obtain  Mandelbrot tori embedded 
in quadratic rational moduli space 
reminiscent of the ones obtained by Branner and Hubbard \cite{BrannerHubbardCubicI} for cubic polynomials
(see Corollary~\ref{c:mandelbrot}).

{\subsection*{Acknowledgments}
  Part of this work was done when the author was visiting  the Institute for Mathematical Sciences (IMS) at Stony Brook University and
  the Institut de Math\'ematiques de Toulouse (IMT). The author
  is grateful to the IMS and IMT for their hospitality.

\section{Preliminaries}
\label{s:preliminaries}

\subsection{Moduli space}

Denote the moduli space of quadratic rational maps with a marked critical point by $\cM_2^{cm}$. More precisely, consider dynamical pairs $(f,\omega)$ where $f: \pone_\C \to \pone_\C$ is a quadratic rational map and $f'(\omega)=0$. We say that  two such pairs $(f,\omega)$ and  $(g,\omega')$ are conjugate if there exists a M\"oebius transformation $\gamma$ such that $\gamma \circ g =  f \circ \gamma$ and $\gamma(\omega') = \omega$.
According to Lemma~6.1 in \cite{MilnorQuadratic}, the moduli space of critically marked quadratic rational maps $\cM_2^{cm}$ is an algebraic surface with a unique singular point at the conjugacy class of  $(z \mapsto z^{-2},\omega=0)$.
We refer to the elements of $\cM^{cm}_2$ simply as maps.
Also, $\cS_p$ is an algebraic subvariety of $\cM_2^{cm}$, for all $p\ge 1$.
The curves $\cS_p$ are known to be smoothly embedded in $\cM_2^{cm}$
with the sole exception of $\cS_2$ at the singular point of $\cM_2^{cm}$ (e.g. see~\cite{LWVcor,EpsteinTransversality}c.f.~\cite[Theorem~2.5]{KiwiRees}). Conjecturally $\cS_p$ is irreducible for all $p\ge 1$ (c.f. \cite{RamadasGleason}).

\subsection{Counting in the Quadratic Family}
\label{counting-polynomial-ss}
Abstractly, for $p \ge 1$, the numbers $\eta'(p) \in \N$ are defined by:
$$2^{p-1} = \sum_{d|p} \eta'(d).$$
Therefore, $$\eta'(p) = \dfrac{1}{2} \sum_{d|p} \mu(d)\cdot 2^{p/d},$$ 
where $\mu$ is the M\"obius function.

It is however convenient to relate $\eta'(p)$ with quadratic polynomial dynamics, as in the introduction. Recall that $Q_c(z) := z^2 +c$ denotes the quadratic family.
Then, 
\begin{equation}
  \label{eq:eta}
  \eta'(p) = |\{ c \in \C : z=0 \text{ has period } p \text{ under } Q_c \}|.
\end{equation}
In fact, according to Gleason (e.g. see~\cite[Expos\'e XIX]{OrsayNotes}), all the roots of
the degree $2^{p-1}$ polynomial $Q_c^{\circ p-1}(c)$ are simple.
Thus, both sides of \eqref{eq:eta} must be equal.

\bigskip
In order to write a formula for $\eta'_{II}(p)$, we need to define $\nu_q(k)$, for
$1<k\le q$.
Let $0 \le r < q$ be such that $p = r \mod q$.
Then
$$\nu_q (k) :=
\begin{cases}
  \dfrac{1}{2} \cdot \dfrac{2^k-2^r}{2^q-1} & \text{ if } r \neq 0, \\
  &\\
  \dfrac{1}{2} + \dfrac{1}{2} \cdot \dfrac{2^k-1}{2^q-1}& \text{ if } r = 0.
\end{cases}
$$
For example,

\begin{equation}
  \label{eq:nu2}
\nu_2(p) := \begin{cases}
        \dfrac{2^n-2}6 & \text{ if } n \text{ is odd,}\\ \\
        \dfrac{2^n - 4}6 & \text{ if } n \text{ is even.}
      \end{cases}
    \end{equation}

      The numbers $\nu_q(k)$ are also closely related to the quadratic family.
      More precisely, $\nu_q(k)$ is the number of parameters $c$ in
      the $1/q$-limb of the Mandelbrot set 
      for which the critical point
      $c=0$ is periodic of period, at least $3$, dividing $k$ (e.g. see~\cite{MilnorPOP} for the definition of limbs).

      In order to employ some of the results in~\cite{KiwiRees},
      our next lemma records the simple relation between
      the numbers $\nu_2(p)$ and $\eta'(p)$:

      \begin{lemma}
        \label{etap-l}
        Let $\nu_2'(p)$ be such that $$\nu_2 (p) = \sum_{3 \le d |p} \nu_2'(d).$$
        Then, $\eta'(p) = 3 \nu'_2(p)$.
      \end{lemma}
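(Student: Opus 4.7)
The plan is to prove the identity by Möbius inversion and then reduce it to a manageable sum. First I would extend $\nu_2'$ by declaring $\nu_2'(1)=\nu_2'(2)=0$, which is consistent because $\nu_2(1)=(2-2)/6=0$ and $\nu_2(2)=(4-4)/6=0$ from \eqref{eq:nu2}. With this convention, the defining relation becomes the standard Dirichlet convolution $\nu_2(p)=\sum_{d\mid p}\nu_2'(d)$, and Möbius inversion yields
$$\nu_2'(p) = \sum_{d\mid p} \mu(p/d)\,\nu_2(d) \qquad \text{for } p\ge 3.$$
On the other hand, the definition $2^{p-1}=\sum_{d\mid p}\eta'(d)$ gives $\eta'(p)=\tfrac12\sum_{d\mid p}\mu(p/d)\,2^d$.

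Next I would combine the two formulas: the identity $\eta'(p)=3\nu_2'(p)$ is equivalent, after multiplying through by $6$, to
$$\sum_{d\mid p} \mu(p/d)\bigl(6\nu_2(d)-2^d\bigr)=0 \qquad \text{for all } p\ge 3.$$
The point is that the quantity $6\nu_2(d)-2^d$ is exceptionally simple: using \eqref{eq:nu2}, a direct computation gives $6\nu_2(d)-2^d=-2$ when $d$ is odd and $-4$ when $d$ is even. Hence the sum splits as
$$-2\sum_{d\mid p}\mu(p/d)\;-\;2\sum_{\substack{d\mid p\\ d\text{ even}}}\mu(p/d).$$

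I would then evaluate each piece with standard Möbius identities. The first sum vanishes for any $p>1$. For the second, if $p$ is odd there are no even divisors and the sum is empty; if $p$ is even, the substitution $d=2d'$ converts it into $\sum_{d'\mid p/2}\mu((p/2)/d')$, which vanishes as soon as $p/2>1$, i.e. for $p\ge 4$. Since we are assuming $p\ge 3$, both sums vanish, establishing the desired identity.

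The main obstacle is purely bookkeeping: making sure the excluded indices $d=1,2$ in the defining sum for $\nu_2'$ are handled correctly, which is why I extend $\nu_2'$ by zero on $\{1,2\}$ at the outset and restrict the conclusion to $p\ge 3$. Once that is set up, the proof is a short computation with no further subtleties.
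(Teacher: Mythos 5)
Your proof is correct. The argument is sound throughout: the consistency check $\nu_2(1)=\nu_2(2)=0$ justifies extending $\nu_2'$ by zero, the explicit M\"obius inversion formulas for $\eta'$ and $\nu_2'$ are right, the computation $6\nu_2(d)-2^d=-2$ (for $d$ odd) or $-4$ (for $d$ even) is correct from \eqref{eq:nu2}, and the final reduction to the two standard vanishing M\"obius sums $\sum_{d\mid p}\mu(p/d)$ and $\sum_{d\mid p,\,d\text{ even}}\mu(p/d)=\sum_{d'\mid p/2}\mu\bigl((p/2)/d'\bigr)$ is handled properly (including the edge cases $p$ odd and $p\ge 4$ even). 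The only blemish is that ``multiplying through by $6$'' should really be ``clearing denominators'' (one multiplies the relation $\tfrac12\sum\mu(p/d)2^d=3\sum\mu(p/d)\nu_2(d)$ by $2$), but this is a cosmetic slip, not a gap.

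The route you take is genuinely different from the paper's. The paper avoids unpacking the M\"obius function at all: it observes that since $\eta'$ and $3\nu_2'$ satisfy the same recursion over divisors $d\ge 3$, it suffices to match the cumulative sums, and then verifies $\sum_{3\le d\mid p}\eta'(d)=2^{p-1}-\eta'(1)-\eta'(2)=3\nu_2(p)$ by a two-line calculation from \eqref{eq:nu2}. You instead invert both sides explicitly and then show that the resulting M\"obius sum vanishes termwise. Your version is a bit longer and requires a couple of standard M\"obius identities, but it has the advantage of being entirely self-contained and of exhibiting the precise arithmetic input ($6\nu_2(d)-2^d$ depends only on the parity of $d$). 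The paper's version is shorter and keeps the inversion implicit. Both are valid; they differ in where the (trivial) number theory is localized.
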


      \begin{proof}
        It is sufficient to prove that, for all $p \ge 3$,
        we have $$\sum_{3 \le d |p} \eta'(d) = 3 \nu_2(p).$$
        Indeed, from \eqref{eq:nu2}, for $p$ even,
        $$ \sum_{3 \le d |p} \eta'(d) = 2^{p-1} - \eta'(1) - \eta'(2) = 2^{p-1}-2 = 3 \nu_2(p).$$
        A similar identity holds for $p$ odd. 
      \end{proof}

      \subsection{Counting in $\cS_p$}
      \label{counting-rational-ss}
% Given a quadratic rational map $f$ in $\cM_2^{cm}$, we denote by $\omega(f)$ its
% marked critical point and by $\omega'(f)$ the other (free) critical point of $f$. 
A map $f \in \cM_2^{cm}$ is \emph{hyperbolic} if both of its critical orbits are in the basin(s) of some attracting cycle(s). A connected component of hyperbolic maps is called a \emph{hyperbolic component}.
Following Rees~\cite{rees1990components}, hyperbolic components are classified into four types according to the relative position of the critical points in the attracting basins. Here we are mainly concerned with type II components, also known as \emph{bitransitive or type B components}.
A \emph{type II hyperbolic component} consists of maps having a unique attracting cycle of period $p \ge 2$ such that both critical points belong
to periodic Fatou components. These  Fatou components are necessarily distinct. Each type II hyperbolic component $\cH$  of period $p$ contains a unique postcritically finite map $f_0$ called the \emph{center} of $\cH$.
The critical points of $f_0$ lie in the same orbit of period $p$. In particular, $f_0 \in \cS_p$.

In~\cite{KiwiRees} we obtained a formula for the number $\eta_{II}'(p)$ of type II hyperbolic components in $\cM_2^{cm}$ of exact period $p$. It is not difficult to check that the
formula written below is equivalent to the one obtained in ~\cite[Theorem 1.2]{KiwiRees}.

\begin{theorem}
  For $p \ge 3$, $$ \sum_{3 \le d|p} \dfrac{\eta'_{II} (d)}{d}= \dfrac{\nu_2(p)}{2} - \dfrac{A(p)}{2p},$$
  where
  $$A(p) := p \cdot \sum_{q=2}^{p-1} \phi(q) \left( \sum_{k=1}^{p-1} \nu_q(k) (2^{p-k-1}- \nu_q(p-k))\right),$$
  and $\phi$ denotes the Euler totient function.
\end{theorem}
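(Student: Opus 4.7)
The plan is to verify this identity as an algebraic reformulation of~\cite[Theorem~1.2]{KiwiRees}. That reference establishes a combinatorial formula for the count $\eta'_{II}(d)$ of bitransitive centers of exact period $d$, expressed in terms of the numbers $\nu_q(k)$ and the Euler totient $\phi$. Our task here is to organize this count weighted by $1/d$ and summed over divisors $d \ge 3$ of $p$, and to compare the result with the stated right-hand side.

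First, I would substitute the expression for $\eta'_{II}(d)$ from~\cite[Theorem~1.2]{KiwiRees} into $\sum_{3 \le d \mid p} \eta'_{II}(d)/d$. After interchanging summation, the inner sums over $d$ collapse in a manner analogous to the Dirichlet--M\"obius calculation used in the proof of Lemma~\ref{etap-l}: indices $(q,k)$ contributing to type II centers whose period divides $p$ are precisely those with $2 \le q \le p-1$ and $1 \le k \le p-1$, and the weight $\phi(q)$ arises naturally from the count of rotation numbers with denominator exactly $q$ that survive the divisibility constraint.

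Next, I would identify the leading term $\nu_2(p)/2$ as the contribution from the simplest bitransitive configurations, namely those attached to the $1/2$-limb of the Mandelbrot set as described in \S\ref{counting-polynomial-ss}. The remaining contributions, indexed by $q$ with $2 \le q \le p-1$, then match $A(p)/(2p)$ by identifying each product $\nu_q(k) \cdot (2^{p-k-1} - \nu_q(p-k))$ with the combinatorial data describing a bitransitive pair whose first critical orbit visits a Fatou component of period $k$ attached to a $1/q$-sublimb and whose second orbit is a compatible mate of period $p-k$ (the factor $2^{p-k-1}$ accounting for all admissible second orbits, minus those already enumerated by $\nu_q(p-k)$).

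The main obstacle is purely bookkeeping: one must track carefully the factor $1/(2p)$ and the sign in $A(p)$, which jointly reflect the symmetry between the two critical points in a bitransitive pair, the conversion from points to orbits, and the exclusion of the degenerate periods $1$ and $2$ (which correspond to the cases $\cS_1 \cong \C$ and $\cS_2 \cong \C^\ast$ where the two critical points cannot coexist in the same bitransitive cycle). Once the dictionary between~\cite[Theorem~1.2]{KiwiRees} and the present formulation is set up, the verification reduces to elementary manipulation of the explicit definition of $\nu_q(k)$ given in \S\ref{counting-polynomial-ss}.
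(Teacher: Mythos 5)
Your proposal correctly identifies the paper's own approach, which is simply to reduce the statement to \cite[Theorem~1.2]{KiwiRees} and declare the equivalence to be an elementary algebraic rearrangement; the paper itself offers no more than the sentence ``it is not difficult to check.'' In that sense you and the paper are on the same page about the strategy.

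However, your proposal does not actually close the gap that the paper leaves open. Nowhere do you write down what \cite[Theorem~1.2]{KiwiRees} says, perform the substitution, or carry out the interchange of sums; you only announce that the sums ``collapse'' and that the terms ``match.'' That is a restatement of the claim, not a verification of it. Worse, the dynamical narrative you supply to justify the matching is speculative and, as far as I can tell, not actually the content of the cited result. For instance, you interpret $\nu_2(p)/2$ as ``the contribution from bitransitive configurations attached to the $1/2$-limb,'' but $\nu_2(p)$ counts quadratic \emph{polynomial} parameters in the $1/2$-limb of the Mandelbrot set with periodic critical point, not bitransitive rational maps, and you never account for the overall factor of $\tfrac{1}{2}$ other than by calling it ``bookkeeping.'' Likewise, the gloss of $\nu_q(k)\bigl(2^{p-k-1}-\nu_q(p-k)\bigr)$ as ``a first orbit in a $1/q$-sublimb'' paired with ``all admissible second orbits minus those already enumerated'' is a plausible-sounding story, but you give no argument connecting it to the actual combinatorics in \cite{KiwiRees}, and if the story is wrong the purported collapse of sums is unfounded. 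Since this theorem is entirely a reformulation of a known counting formula, the only acceptable ``proof'' is the explicit algebra: write the formula from \cite[Theorem~1.2]{KiwiRees}, form $\sum_{3\le d\mid p}\eta'_{II}(d)/d$, and verify term by term using the explicit piecewise definition of $\nu_q(k)$ that the result equals $\nu_2(p)/2 - A(p)/(2p)$. Your proposal defers exactly this step.
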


The M\"oebius inversion formula yields:
$$\eta'_{II} (p) = \dfrac{1}{2} \left( p \nu'_2(p) + \sum_{d|p, p/d \ge 3} d \cdot \mu(d) \cdot A \left(\dfrac{p}{d}\right) \right).$$

\subsection{Partial parametrization of  $\cM_2^{cm}$}
The complement of
$\cS_1 \cup \cS_2$ in  $\cM_2^{cm}$
is identified with $\C^* \times \C$
via the family 
$$f_{a,b}(z) = 1 + \dfrac{b}{z} + \dfrac{a}{z^2}$$
with marked critical point $\omega =0$. That is,
$$\cM_2^{cm} \setminus (\cS_1 \cup \cS_2) = \{ [(f_{a,b},0)] : (a,b) \in 
\C^* \times \C\}.$$
In fact, observe that if $[(f,\omega)] \in \cM_2^{cm} \setminus (\cS_1 \cup \cS_2)$, then $\omega, f(\omega)$ and $f^{\circ 2}(\omega)$ are distinct. Normalizing so that
$\omega =0$, $f(\omega)=\infty$ and $f^{\circ 2}(\omega)=1$, we have that $f=f_{a,b}$, for some $(a,b) \in \C^* \times \C$.

Regard the projective plane $\P^2_\C$ as
a compactification of $\cM_2^{cm} \setminus (\cS_1 \cup \cS_2)$
via the inclusion $\C^* \times \C \hookrightarrow \P^2_\C$ given by $(a,b) \mapsto [a:b:1]$. Thus, $\cM_2^{cm} \setminus (\cS_1 \cup \cS_2)$ is
identified with the complement, in $\P^2_\C$,  of the lines $$\cL_+ := \{ [a:b:c] : a=0\}$$
and $$\cL_- := \{ [a:b:c]: c = 0 \}.$$
From now on we regard $\cS_p$, for any $p \ge 3$, as a subset of $\P^2_\C$ and
denote its closure by $\ocS_p$.

Milnor~\cite{MilnorQuadratic} introduced a compactification of the moduli space
of quadratic rational maps, without critical markings, by a line at infinity
corresponding to pairs $\{\lambda,\lambda^{-1}\}$ where $\lambda \in \oC$.
Points in moduli space close to  $\{\lambda,\lambda^{-1}\}$  have
two fixed points with multipliers close to $\lambda$ and $\lambda^{-1}$.
Silverman~\cite{SilvermanModuli}
showed that, in fact, Milnor's compactification is
the GIT-compactification of unmarked moduli space.
Maps close to  $\cL_-$ and $\cL_+$ in marked moduli space,
after forgetting the marking, become maps in unmarked moduli space close to infinity.
The line $\cL_-$ should be regarded as a blow up of $\{-1,-1\}$.
More precisely,
as $[a:b:1]$ approaches $[1:m:0] \in \cL_-$ with $m \neq 0$, the multipliers of
two fixed points of $f_{a,b}$ converge to $-1$. Also, as
$[a:b:1]$ approaches $[0:m:1] \in \cL_+$ with $m\neq 0$, the multipliers of
two fixed points of $f_{a,b}$ converge to reciprocal complex  numbers
in $\C \setminus \{0,-1\}$.

\subsection{Degree of $\cS_p$}
% Let $\nu_2'(p)$ be the number of hyperbolic components of period exactly $p$ in the $1/2$-limb of the Mandelbrot set. That is, 

% $$\sum_{3\le d|p} \nu_2'(p) = \nu_2 (p) = \begin{cases}
%         \dfrac{2^n-2}6 & \text{ if } n \text{ is odd,}\\ \\
%         \dfrac{2^n - 4}6 & \text{ if } n \text{ is even.}
%       \end{cases}
%       $$     
%       Equivalently, by the M\"oebius inversion formula, 
%       $$\nu_2'(p) = \sum_{d|p, p/d \ge 3} \mu(d) \cdot \nu_2 \left(\dfrac{p}{d}\right).$$

      \begin{lemma}[{\cite[Lemma 3.1]{KiwiRees}}]
        \label{l-degree}
        For all $p \ge 3$,
$$\deg \ocS_p = \nu'_2(p).$$
  Moreover, none of the points $[1:0:0],[0:1:0],[0:0:1]$ belongs to $\ocS_p$.
\end{lemma}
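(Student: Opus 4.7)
The plan is to compute $\deg \ocS_p$ via Bezout by intersecting $\ocS_p$ with the line $\cL_- = \{c = 0\}$. This line is the natural choice because it is the ``multiplier $-1$'' boundary at infinity: at a point $[1:m:0] \in \cL_-$ with $m \notin \{0, \infty\}$, two fixed points of $f_{a,b}$ have multipliers converging to $-1$. By the parabolic rescaling theory summarized in \S\ref{s:parabolic}, the second iterate $f_{a_n,b_n}^{\circ 2}$ of an approaching sequence renormalizes to a quadratic polynomial $Q_c$ whose parameter $c$ depends holomorphically on $m$ and lies in the closure of the $1/2$-limb of the Mandelbrot set, and the map $m \mapsto c$ is a local biholomorphism at each parameter with $0$ periodic of exact period $p$ under $Q_c$.

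Consequently, the set-theoretic intersection $\cL_- \cap \ocS_p$ is in bijection with the set of parameters $c$ in the $1/2$-limb for which $0$ has exact period $p$ under $Q_c$; by the interpretation of $\nu'_2(p)$ in \S\ref{counting-polynomial-ss}, this set has cardinality $\nu'_2(p)$. To upgrade this set-theoretic count to a degree computation, I verify that each intersection point is transverse. Gleason's theorem, recalled in \S\ref{counting-polynomial-ss}, gives that the critical-periodic parameters of the quadratic family are simple roots of $Q_c^{\circ p-1}(c) = 0$; combined with the local biholomorphic character of the rescaling $m \mapsto c$, this yields local intersection multiplicity one at each puncture of $\cS_p$ on $\cL_-$.

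The remaining task is to exclude the three coordinate points $[1:0:0]$, $[0:1:0]$, $[0:0:1]$ of $\P^2_\C$ from $\ocS_p$. The point $[0:0:1]$ corresponds to $(a,b) = (0,0)$ and to the constant map $f_{0,0}(z) \equiv 1$, which cannot be a limit of maps having a period-$p$ critical orbit for $p \ge 3$. The points $[1:0:0]$ and $[0:1:0]$ lie on $\cL_-$ (with $[0:1:0]$ also on $\cL_+$); a Puiseux-series analysis in the spirit of \S\ref{s:puiseux} rules out any approach within $\cS_p$ by showing that the rescaling dynamics degenerates to a non-admissible limit at which the exact-period-$p$ critical-orbit condition fails for $p \ge 3$.

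The hard step is the transversality in the second paragraph: matching the scheme-theoretic local intersection multiplicity of $\ocS_p \cdot \cL_-$ with the Gleason simplicity in the polynomial family. This requires a quantitative parabolic-implosion picture and is parallel in spirit to the ``asymptotics of transversality'' encoded by the paper's Main Lemma; the non-Archimedean (Puiseux) language of \S\ref{s:puiseux} is well suited to this bookkeeping.
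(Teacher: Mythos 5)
The paper does not prove this lemma; it is imported verbatim from \cite[Lemma~3.1]{KiwiRees}, where the statement is established (as the paper remarks) for the union $\overline{\cX_n}=\cup_{3\le p|n}\overline{\cS_p}$ and then specialized by M\"obius inversion. So your proposal is not a reconstruction of the paper's argument but an attempted independent proof, and it has a genuine gap.

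The central problem is the transversality claim in your second paragraph. The quantity $\deg\ocS_p$ computed via $\cL_-$ is $\sum_{x\in\cL_-}\mu_x$, where $\mu_x$ is exactly the local intersection multiplicity of the branch $\cE_x$ with $\cL_-$; this is recorded as Lemma~\ref{bezout-l}. Those multiplicities are \emph{not} generically equal to $1$: they are the integers that show up as $2\mu_x$ in the Main Lemma~\ref{l-main} and as the covering degree $\mu_x$ in \S\ref{ss:local-par}, and in the Puiseux picture they are the denominators of the fractional exponents $\rho_j$ appearing in the polynomial-rescaling analysis of \S\ref{s:primitive} (cf.\ the analogous escape-region multiplicities for cubic polynomials in \cite{AKMCubic}, which are not all $1$). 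Asserting ``each intersection point is transverse'' and then invoking Gleason simplicity would force $\mu_x=1$ for every $x\in\cL_-$, which the paper neither claims nor expects; your argument supplies no mechanism to control $\mu_x$ and hence cannot produce the degree.

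A second, related gap is the rescaling step itself. Near $[1{:}m{:}0]\in\cL_-$ the period-$2$ rescaling limit is the \emph{parabolic} map $R_m$ of the family \eqref{eq:cfamilies} (see the proposition preceding Proposition~\ref{p:stimson}), not a quadratic polynomial $Q_c$. There is no ``map $m\mapsto c$'': passing from $R_v$ to a $Q_c$-renormalization requires the further polynomial rescaling of Proposition~\ref{p:parameter-poly}, which only applies in the non-central-return case and produces critical period $p/q'$, not $p$. Moreover, even after this is set up correctly, identifying the resulting count with ``number of $c$ in the $1/2$-limb of $M$ with $0$ of exact period $p$'' is a non-trivial statement about the combinatorics of the parabolic Mandelbrot set versus the $1/2$-limb, which you cannot simply cite from \S\ref{s:parabolic}. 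Finally, note a soft circularity: the machinery you invoke from \S\ref{s:puiseux} relies on Lemma~\ref{l-first-order}, whose proof already uses the ``Moreover'' part of the present lemma ($[1{:}0{:}0]\notin\ocS_p$), so a self-contained proof along your lines would first have to re-derive that exclusion independently.
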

Lemma~\ref{etap-l} yields:
\begin{equation}
  \label{eq:degree}
  \deg \ocS_p = \dfrac{\eta'(p)}{3}.
\end{equation}

We remark that~\cite[Lemma~3.1]{KiwiRees} states an equivalent
assertion but for 
$\overline{\cX_n}:=\cup_{3 \le p|n} \overline{\cS_p}$ instead of $\ocS_p$.

\subsection{Punctures}
Recall that the smooth compactification $\widehat{\cS_p}$ of $\cS_p$ is obtained from $\cS_p$ by filling each of its punctures with an ideal point.
Equivalently, it is the normalization of the projective curve $\ocS_p$.
The punctures of
$\cS_p$ correspond to germs of branches of $\ocS_p$ at the lines
$\cL_+$ and $\cL_-$. A representative of the branch corresponding to a puncture $x$ will be denoted by $\cE_x$. A punctured neighborhood
of $x$ in  $\widehat{\cS_p}$ is  $\cE^*_x := \cE_x \setminus \cL_\pm$.

Let $\pi: \widehat{\cS_p} \to \ocS_p$ be the natural projection. It is
a bijection in $\cS_p$ and maps each puncture ${x}$ onto the intersection
$\pi(x)$ of the corresponding branch with $\cL_+$ or  $\cL_-$.
We will systematically abuse of notation and simply say that $x \in \cL_+$ or
$x \in \cL_-$. 

We define the \emph{multiplicity $\mu_x$} of a puncture $x$ as the intersection multiplicity of the branch $\cE_x$  corresponding to $x$ with the lines $\cL_+, \cL_-$.

As a direct consequence of Bezout's Theorem and Lemma~\ref{l-degree}, we have:

\begin{lemma}
  \label{bezout-l}
  For all $p \ge 3$,
  $$\dfrac{\eta'(p)}{3} = \sum_{x \in \cL_+} \mu_x = \sum_{x \in \cL_-} \mu_x,$$
  where the sums are taken over $ x \in \widehat{\cS_p} \setminus \cS_p$. 
\end{lemma}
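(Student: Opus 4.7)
The plan is to apply Bezout's theorem directly to the intersections $\ocS_p \cap \cL_\pm$ in $\P^2_\C$. By Lemma~\ref{l-degree}, together with the relation~\eqref{eq:degree}, we have $\deg \ocS_p = \eta'(p)/3$; and each line $\cL_\pm$ has degree $1$. So if $\ocS_p$ shares no common component with $\cL_\pm$, Bezout gives total intersection number $\eta'(p)/3$ on each side.

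First I would check that neither $\cL_+$ nor $\cL_-$ is an irreducible component of $\ocS_p$. If $\cL_-=\{c=0\}$ were a component, then in particular $[1:0:0]\in \cL_-$ would lie in $\ocS_p$, contradicting the last assertion of Lemma~\ref{l-degree}; similarly $[0:0:1]\in \cL_+=\{a=0\}$ rules out $\cL_+\subset \ocS_p$. Hence Bezout applies, and
$$
\sum_{P\in \ocS_p\cap \cL_-} I_P(\ocS_p,\cL_-)\;=\;\deg \ocS_p \cdot \deg \cL_-\;=\;\frac{\eta'(p)}{3},
$$
and analogously for $\cL_+$.

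Next I would translate these local intersection numbers into the sum $\sum \mu_x$. Recall that $\widehat{\cS_p}$ is the normalization of $\ocS_p$, so the points of $\widehat{\cS_p}\setminus \cS_p$ are in bijection with branches of $\ocS_p$ at points of $\cL_+\cup \cL_-$. The intersection $\cL_+\cap \cL_-=\{[0:1:0]\}$ is not in $\ocS_p$ by Lemma~\ref{l-degree}, so every puncture $x$ satisfies $\pi(x)\in \cL_+$ or $\pi(x)\in \cL_-$ but not both; in particular the two sums in the statement run over disjoint sets of punctures. Moreover, the classical local decomposition of intersection multiplicity into contributions from branches gives
$$
I_P(\ocS_p,\cL_-) \;=\; \sum_{\pi(x)=P} I_P(\cE_x,\cL_-)\;=\;\sum_{\pi(x)=P}\mu_x
$$
by the very definition of $\mu_x$. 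Summing over $P\in \cL_-$ yields $\sum_{x\in \cL_-}\mu_x = \eta'(p)/3$, and the same argument with $\cL_+$ in place of $\cL_-$ finishes the proof.

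There is essentially no obstacle; the only points requiring care are verifying non-containment of $\cL_\pm$ in $\ocS_p$ and confirming that no puncture sits at the corner $\cL_+\cap \cL_-$, both of which are immediate from the exclusion of the three coordinate points in Lemma~\ref{l-degree}.
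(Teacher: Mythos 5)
Your proof is correct and is exactly the argument the paper has in mind: the paper simply declares the lemma ``a direct consequence of Bezout's Theorem and Lemma~\ref{l-degree},'' and you have filled in the routine details (non-containment of $\cL_\pm$ in $\ocS_p$, exclusion of the corner $[0:1:0]$, and the branch-wise decomposition of local intersection numbers) in the standard way.
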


% \subsection{Near the punctures}
% In order to sketch the proof of our main result in the next section we will
% need 

% \begin{lemma}
%   Let $x \in \widehat{\cS}_p$ be a puncture.
%   \begin{enumerate}
%   \item   If $\pi(x) \in \cL_+$, then, as $(a,b) \in \cS_p$ approaches $\pi(x)$,
%   $$b = c_0 + c_1 a + o(a),$$
%   for some $c_0, c_1 \neq 0$.

% \item
%    If $\pi(x) \in \cL_-$, then, as $(a,b) \in \cS_p$ approach $\pi(x)$,
%   $$b = {c_{{-1}}}  {a^{-1}} + o(a^{-1}),$$
%   for some $c_{-1} \neq 0$.
% \end{enumerate}
% \end{lemma}

% That $b=c_0 + o(1)$ and $b=c_{-1} a^{-1} + o(a^{-1})$ for some
% non-zero complex numbers $c_0$ and $c_{-1}$ is a direct consequence of Lemma~\ref{l-degree}.
% We postpone the full proof of (1), which is a version of a well known result by
% Stimson, to ...

\section{Asymptotics of Transversality}
\label{s:asymptotics}

Following~\cite{AKMCubic},  in
\S \ref{proof-modulo-ss} we introduce a meromorphic one-form $d \tau$ on $\widehat{\cS_p}$ which is holomorphic in $\cS_p$. The Euler characteristic of $\widehat{\cS_p}$ is the difference between the number of poles and zeros of $d \tau$, counted with multiplicities. The key to compute this difference will be our Main Lemma  stated in \S \ref{main-lemma-ss}.
Then we prove Theorem A in
\S \ref{proof-modulo-ss}, assuming the Main Lemma.

\subsection{Statement of the Main Lemma}
\label{main-lemma-ss}
For $(a,b) \in \C^* \times \C$, and for all $n \ge 0$, let
$$\omega_n (a,b) : = f^{\circ n}_{a,b} (0).$$
Partial derivatives will be denoted by
$\partial_a \omega_n$ and $\partial_b \omega_n$.
 In dynamical space, we are concerned with the behavior of
$$ \dfrac{d f^{\circ p-1}_{a,b}}{dz} (\infty),$$
under the agreement that the above quantity is the derivative of
$f^{\circ p-1}_{a,b} (1/z)$ at $z=0$.
Then, $\partial_b \omega_p$ and $ \dfrac{d f^{\circ p-1}_{a,b}}{dz} (\infty)$
are meromorphic (rational) functions from $\widehat{\cS}_p$ to $\oC$.
Given a meromorphic function $h: \widehat{\cS}_p \to \oC$,
we denote by $\ord_y h \in \Z$ its \emph{order} at $y \in 
\widehat{\cS}_p$. The order at a zero of $h$
is positive and at a pole is negative.

The key to compute the Euler characteristic
is to understand the asymptotics
of $\partial_b \omega_p$ at the punctures. 

\begin{lemma}[Main]
  \label{l-main}
  Let $x \in \widehat{\cS_p}$ be a puncture of $\cS_p$.
  Then:
  \begin{equation}
    \label{eq:main-l}
    \ord_x \partial_b \omega_p  =      \ord_x \dfrac{d f^{\circ p-1}_{a,b}}{dz} (\infty) + \begin{cases}
    0  & \text{ if } x \in \cL_+, \\
    2 \mu_x  & \text{ if } x \in \cL_-.\\
  \end{cases} 
\end{equation}
\end{lemma}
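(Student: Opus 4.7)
The plan is to translate the statement into a valuation identity in the Puiseux field $\L$, via a Puiseux parametrization of the branch $\cE_x$, and then deduce it from two elementary chain rules combined with the description of critical orbits over $\L$ provided by \cite{KiwiPuiseuxQuadratic}. I begin by recording the chain-rule formulas. Since $\omega_1 = f_{a,b}(0) = \infty$ and $\omega_2 = f_{a,b}(\infty) = 1$ do not depend on $(a,b)$, differentiating $\omega_{n+1} = f_{a,b}(\omega_n)$ in $b$ and using $\partial_b f_{a,b}(z) = 1/z$ gives the recursion $\partial_b \omega_{n+1} = f'_{a,b}(\omega_n)\,\partial_b\omega_n + 1/\omega_n$, whose solution starting from $\partial_b\omega_2 = 0$ is
$$\partial_b \omega_p \;=\; \sum_{j=2}^{p-1} \Big( \prod_{k=j+1}^{p-1} f'_{a,b}(\omega_k)\Big) \frac{1}{\omega_j}.$$
On the dynamical side, $f_{a,b}(1/w) = 1 + bw + aw^2$ shows that the derivative of $f_{a,b}$ at $\omega_1 = \infty$, in the coordinate $w = 1/z$ at the source and $z$ at the target, equals $b$; the chain rule along the orbit $\omega_1 \to \omega_2 \to \cdots \to \omega_p = 0$ then gives
$$\frac{df^{\circ p-1}_{a,b}}{dz}(\infty) \;=\; b \cdot \prod_{k=2}^{p-1} f'_{a,b}(\omega_k).$$

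Taking the quotient, the Main Lemma becomes equivalent to
$$\ord_x \Big( \frac{1}{b} \sum_{j=2}^{p-1} \frac{1}{\omega_j \prod_{k=2}^{j} f'_{a,b}(\omega_k)} \Big) \;=\; \begin{cases} 0, & x \in \cL_+, \\ 2\mu_x, & x \in \cL_-. \end{cases}$$
To isolate the scalar factor, I choose a local uniformizer $t$ of $\widehat{\cS_p}$ at $x$ and a Puiseux parametrization $(a(t), b(t)) \in \L \times \L$ of $\cE_x$; for any meromorphic function $h$, $\ord_x h$ equals the $t$-adic valuation of $h(a(t),b(t))$. The defining equations of $\cL_\pm$ force $\ord_x a = \mu_x$, $\ord_x b = 0$ at $\cL_+$, and $\ord_x a = \ord_x b = -\mu_x$ at $\cL_-$. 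Hence $\ord_x(1/b)$ already accounts for $0$ or $\mu_x$ in the two cases respectively, and the lemma reduces to proving
$$\ord_x \Big(\sum_{j=2}^{p-1} \frac{1}{\omega_j \prod_{k=2}^{j} f'_{a,b}(\omega_k)}\Big) \;=\; \begin{cases} 0, & x \in \cL_+, \\ \mu_x, & x \in \cL_-. \end{cases}$$

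To evaluate this remaining valuation I view $f_{a(t),b(t)}$ as a quadratic rational map over $\L$ whose marked critical point is periodic of period $p$, with critical orbit $0 \to \infty \to \omega_2 \to \cdots \to \omega_{p-1} \to 0$. The description of such orbits in \cite{KiwiPuiseuxQuadratic}, together with the parabolic rescaling classification recalled in \S\ref{s:parabolic}, supplies the Puiseux valuations of each $\omega_j$ and $f'_{a,b}(\omega_k)$ puncture by puncture. At $\cL_+$ the reduction of $f_{a,b}$ is the M\"obius transformation $1 + b/z$, the orbit stays in a bounded non-Archimedean disk, and a single summand (e.g.\ the $j=2$ term $-1/(b+2a)$) dominates with valuation $0$. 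At $\cL_-$ the map degenerates via a parabolic rescaling limit where $a$ and $b$ both blow up, and the summand analysis produces the extra $\mu_x$. The principal obstacle is precisely this last step: several summands may share the same minimal valuation, and one has to rule out cancellations between their leading Puiseux coefficients. This is where the non-Archimedean machinery of \cite{KiwiPuiseuxQuadratic}---reductions on the Berkovich skeleton, combinatorics of the critical orbit, and the explicit forms of the rescaling limits at the punctures of $\cS_p$---becomes indispensable, whereas the chain-rule reductions above are purely formal.
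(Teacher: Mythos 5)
Your setup is correct and closely parallels the paper's: passing to a Puiseux parametrization of the branch at $x$ and reading off $\ord_x$ as a $t$-adic valuation is exactly the translation carried out in \S\ref{s:puiseux2complex}, and your explicit chain-rule expressions for $\partial_b\omega_p$ and for $\frac{d f^{\circ p-1}_{a,b}}{dz}(\infty)$ as a product along the critical orbit are valid. The bookkeeping that the factor $1/b$ supplies the correction $0$ or $\mu_x$ is also sound.

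However, the heart of the lemma is precisely the step you leave unresolved, and your proposed route for it is not what the machinery actually delivers. You reduce the problem to computing the valuation of
$\sum_{j=2}^{p-1}\bigl(\omega_j\prod_{k=2}^{j}f'_{a,b}(\omega_k)\bigr)^{-1}$
and assert that one can identify a dominant summand and rule out cancellation by ``reductions on the Berkovich skeleton, combinatorics of the critical orbit, and the explicit forms of the rescaling limits.'' In a non-Archimedean field the strong triangle inequality only pins down the valuation of a sum when there is a \emph{unique} term of minimal valuation; when several terms tie (which happens routinely for parameters with non-central returns, where the orbit revisits the smallest scale over and over), no amount of reduction data about the \emph{individual} terms settles the valuation of their sum. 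Your side remark that at $\cL_+$ the $j=2$ summand $-1/(b+2a)$ ``dominates'' is already false for satellite parameters in a limb of order $q\ge 3$: several factors $f'_{a,b}(\omega_k)$ along one period have spherical norm $1$ but classical valuation of size $|\omega_k|^{-1}$ or $|\omega_{k+1}|/|\omega_k|$, and multiple summands share the minimal valuation.

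The paper sidesteps summand-by-summand estimation entirely. It proves the spherical-norm identity
$\|\partial_\beta\upomega_n(\beta_0)\|=\|\partial_z\varphi_{\beta_0}^{\circ n-1}(\infty)\|\,|\tau|^2$
by induction along the iterates $n$. Non-cancellation is injected at two specific places via \emph{complex} transversality (Lemma~\ref{l:transversality}): first for the parabolic rescaling $R_v$ at the first return (Lemma~\ref{l:satellite}), by observing that $\rd$ of a suitable rescaling of $\partial_\beta\upomega_{q\ell_0}$ equals $\frac{d}{d\tgamma}R^{\circ\ell_0}_{v(\tgamma)}(0)\neq 0$; and second for the quadratic polynomial rescaling $Q$ on the primitive renormalization ball $X_0$ (Proposition~\ref{p:parameter-poly} and \S\ref{s:primitive-r}), where Gleason's theorem gives $\frac{d}{d\tgamma}Q^{\circ p/q'}_{\tgamma}(0)\neq 0$. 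Between these anchors, the inductive step (Lemma~\ref{l:polynomial}) uses the Schwarz Lemma and the ball-size estimates of Lemma~\ref{l:smallest-ball} to show that in the chain-rule recursion $\partial_\beta\upomega_{n+1}=\partial_\beta\varphi_\beta(\upomega_n)+\partial_z\varphi_\beta(\upomega_n)\partial_\beta\upomega_n$ the second term strictly dominates in norm, so the valuation of the sum is determined term by term with no cancellation issue. Without an argument of this kind — or some other mechanism forcing a unique dominant term in your expansion — the valuation of your sum is genuinely undetermined, so the proposal as written has a gap at its central step.
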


The lemma is proven in \S \ref{s:puiseux}.

\subsection{Meromorphic One-form and Euler Characteristic}
\label{proof-modulo-ss}

The curve $\cS_p$ is contained in the vanishing locus
of $\omega_p(a,b)$. The gradient of $\omega_p$ is non-vanishing
in $\cS_p$ (see \cite{LWVcor} \cite{EpsteinTransversality} ). Thus, the differential of $\omega_p$ yields a
meromorphic one-form in $\widehat{\cS_p}$ as follows.

\begin{proposition}
  For all $(a,b) \in \cS_p$,
  $$(\partial_a \omega_p (a,b), \partial_b \omega_p (a,b)) \neq (0,0).$$
  Moreover,
  $$ d \tau :=
  \begin{cases}
    \dfrac{da}{\partial_b \omega_p (a,b)} & \text{ if } \partial_b \omega_p (a,b) \neq 0, \\ \\
    - \dfrac{d b}{\partial_a \omega_p (a,b)} & \text{ if } \partial_a \omega_p (a,b) \neq 0,
  \end{cases}$$
  is a well defined non-vanishing holomorphic one-form in $\cS_p$ which extends to a meromorphic one-form in $\widehat{\cS_p}$.
\end{proposition}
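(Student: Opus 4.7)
The plan is to verify the three assertions in turn — non-vanishing of the gradient on $\cS_p$, consistency of the two local formulas, and meromorphic extension to the punctures — using only the smoothness of $\cS_p$ in $\cM_2^{cm}$ cited from \cite{LWVcor, EpsteinTransversality} plus standard facts about meromorphic forms on normalizations.

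For the first assertion, I would recall that $\cS_p$ (with $p \ge 3$) is contained in $\cM_2^{cm} \setminus (\cS_1 \cup \cS_2) \cong \C^* \times \C$, and that on this affine chart $\cS_p$ is scheme-theoretically cut out by the equation $\omega_p(a,b) = 0$ (together with the exact-period condition removing the lower-$q$ loci for $q \mid p$, $q < p$). Smoothness of $\cS_p$ at every one of its points, which is exactly the content of the references, is equivalent to the non-vanishing of the differential $d\omega_p = \partial_a \omega_p \, da + \partial_b \omega_p \, db$ at each such point, so $(\partial_a \omega_p, \partial_b \omega_p) \neq (0,0)$ throughout $\cS_p$.

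For the consistency of the two local formulas, I would differentiate the identity $\omega_p \equiv 0$ on $\cS_p$: the pullback of $d\omega_p$ to $\cS_p$ vanishes, giving
\[
\partial_a \omega_p \, da + \partial_b \omega_p \, db = 0
\]
as a relation among holomorphic one-forms on $\cS_p$. Wherever both partial derivatives are nonzero this rearranges exactly to $da/\partial_b \omega_p = -db/\partial_a \omega_p$, so the two expressions patch to a single one-form on $\cS_p$. At any $x \in \cS_p$ the first part produces a nonzero partial derivative; by the implicit function theorem the corresponding coordinate ($a$ or $b$) serves as a local uniformizer at $x$, and the appropriate formula then expresses $d\tau$ near $x$ as a local coordinate differential divided by a nonvanishing holomorphic function. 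Hence $d\tau$ is holomorphic and nowhere vanishing on $\cS_p$.

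Finally, to extend $d\tau$ meromorphically to each puncture, I would work on the normalization. For $x \in \widehat{\cS_p} \setminus \cS_p$, pick a local uniformizer $t$; since $a$ and $b$ are restrictions to $\cS_p$ of rational functions on $\P^2_\C$, they pull back through the normalization $\pi \colon \widehat{\cS_p} \to \ocS_p$ to meromorphic functions of $t$ near $x$. Consequently $da$, $db$ are meromorphic one-forms in $t$, and $\partial_a \omega_p$, $\partial_b \omega_p$, being rational functions on $\ocS_p$, are meromorphic in $t$ as well; each candidate formula for $d\tau$ is therefore a ratio of meromorphic objects in $t$, and by the previous paragraph they agree on the punctured neighborhood, defining the required meromorphic extension. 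The main substantive input is the smoothness of $\cS_p$ provided by the cited transversality results; beyond that, the proof is bookkeeping on the normalization, and no delicate step should arise.
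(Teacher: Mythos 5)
Your overall structure is the right one (and it is what the paper intends, since the paper omits the proof as a ``direct consequence of the transversality results cited above''). However, there is a genuine logical misstep in the first paragraph that needs repair.

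You write that smoothness of $\cS_p$ is \emph{equivalent} to the non-vanishing of $d\omega_p$ along $\cS_p$. The forward implication (non-vanishing gradient $\Rightarrow$ smooth zero locus) is the implicit function theorem and is fine. The reverse implication, which is the one you actually need, is false in general: a function can vanish to order $\ge 2$ along a perfectly smooth curve (consider $f^2$ for $f$ with smooth zero set), in which case the gradient vanishes identically on that curve. To get from ``$\cS_p$ is a smooth curve'' to ``$\nabla\omega_p\neq 0$ on $\cS_p$'' you additionally need that $\omega_p$ cuts out $\cS_p$ \emph{reducedly} (i.e.\ to order one), and that statement is precisely the transversality theorem — so your phrasing implicitly assumes what you are trying to derive. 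The fix is simple and brings you in line with what the paper actually invokes in \S 3.2: the cited references \cite{LWVcor,EpsteinTransversality} prove the stronger transversality statement directly, namely that $\nabla\omega_p$ is nowhere zero on the exact-period-$p$ locus; this should be quoted as the input, rather than ``smoothness of $\cS_p$''. With that substitution, the first assertion follows immediately. The remaining two paragraphs — differentiating $\omega_p\equiv 0$ along $\cS_p$ to obtain the compatibility relation $\partial_a\omega_p\,da+\partial_b\omega_p\,db=0$, using the implicit function theorem to exhibit $a$ or $b$ as a local uniformizer so that $d\tau$ is locally a coordinate differential over a nonvanishing holomorphic function, and extending meromorphically over the punctures by observing that $a$, $b$, $\partial_a\omega_p$, $\partial_b\omega_p$ are all meromorphic in a local uniformizer on the normalization $\widehat{\cS_p}$ — are correct as written and complete the argument.
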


We omit the proof of the proposition which is a direct consequence of the transversality results cited above.

\begin{proof}[Proof of Theorem~\ref{A}, assuming Lemma~\ref{l-main}]
  We prove the formula for the Euler characteristic.
  To simplify notation
  let $$G (a,b) := \dfrac{d f^{\circ p-1}_{a,b}}{dz}(\infty).$$
  The number of zeros and  poles of
  the rational map $G:\widehat{\cS}_p \to \oC$ agree, taking into account multiplicities.
By the chain rule, the zeros of $G$ in $\cS_p$ are located at the parameters where the free critical point
is in the orbit of $\omega=0$.
That  is, the zeros of $G$ are the centers of type II components of period $p$.
We postpone proving that 
all the zeros of $G$ are simple to the lemma below.
Since the rest of the poles and zeros of $G$ are at the punctures, we have:
  $$\sum_{x \in \widehat{\cS}_p \setminus \cS_p} \ord_x G + \eta_{II}'(p) = 0.$$
  
  Now we compute the total degree of the one-form $d \tau$. Let $x$ be a puncture of multiplicity $\mu_x$. 
  If   $x \in \cL_\pm$, then there exists a local uniformizing parameter $s$  such that $s^{\pm \mu_x} =a$. Therefore,
  $$da = \pm \mu_x {s^{\pm \mu_x-1}}ds.$$
  Hence,
  $$\ord_x d\tau = -1 \pm \mu_x - \ord_x \partial_b \omega_p,$$
  In view of Lemma~\ref{l-main},
\begin{eqnarray*}
  \sum_{x \in \cL_+} \ord_x d\tau & = & -\sum_{x \in \cL_+} 1 + \sum_{x \in \cL_+} \mu_x - \sum_{x \in \cL_+} \ord_x G, \\
  \sum_{x \in \cL_-} \ord_x d\tau & = & -\sum_{x \in \cL_-} 1 - \sum_{x \in \cL_-} \mu_x - \sum_{x \in \cL_-} \ord_x G - 2 \sum_{x \in \cL_-} \mu_x.
\end{eqnarray*}

From Lemma~\ref{bezout-l}, the sum of the multiplicities of the punctures in each line is
$\eta'(p)/3$ and we obtain:
$$-\chi(\widehat{\cS_p})= \sum_x \ord_x d \tau = -N_p -\dfrac{2 \eta'(p)}{3}+ \eta_{II}'(p).$$
The desired formula for $\chi(\cS_p)$ follows.
\end{proof}

\begin{lemma}
  Let $G: \cS_p \to \oC$ be as above.
  Then, the zeros of $G$ are simple.  
\end{lemma}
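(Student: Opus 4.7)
The plan is to use the chain rule to factor $G$ locally at each of its zeros, and then reduce simpleness to an instance of the transversality principle for postcritically finite maps.

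First I would expand $G$ via the chain rule. Writing $g(z) := f^{\circ p-1}_{a,b}(1/z)$, one has $g = f^{\circ p-2}_{a,b}\circ h$ with $h(z) := f_{a,b}(1/z) = 1 + bz + az^2$, so that $h(0)=1$ and $h'(0) = b$. Unfolding the chain rule gives
$$G(a,b) = g'(0) = b \cdot \prod_{k=1}^{p-2} f'_{a,b}\bigl(f^{\circ k}_{a,b}(\infty)\bigr).$$
Now $f'_{a,b}(z) = -(bz+2a)/z^3 = -b(z - z_\star)/z^3$ where $z_\star := -2a/b$ is the free critical point, and since $\infty = f_{a,b}(0)$ lies in the period-$p$ orbit of $0$, the iterates $f^{\circ k}_{a,b}(\infty)$ for $1 \le k \le p-2$ are distinct from $0$ and $\infty$. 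Consequently a zero of $G$ on $\cS_p$ arises either when $b_0 = 0$ (so that $\infty$ itself is critical) or when there is a unique $k_0 \in \{1,\ldots,p-2\}$ with $f^{\circ k_0}_{a_0,b_0}(\infty) = z_\star$; in either case $(a_0,b_0)$ is a type II center, as expected.

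Next I would factor out the vanishing term. For $b_0 \neq 0$, collecting the nonvanishing pieces near $(a_0,b_0)$ yields
$$G(a,b) = U(a,b)\cdot\bigl(f^{\circ k_0}_{a,b}(\infty) - z_\star(a,b)\bigr),$$
with $U$ holomorphic and nonvanishing. For $b_0 = 0$, an analogous calculation gives $G = U \cdot b$ with $U$ nonvanishing. Thus $\ord_{(a_0,b_0)} G|_{\cS_p}$ equals the order of vanishing on $\cS_p$ of the critical-orbit-relation function $H := f^{\circ k_0}(\infty) - z_\star$ (resp.\ $H := b$).

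Finally I would invoke transversality. Each equation $H=0$ cuts out the algebraic curve in parameter space expressing the critical orbit relation ``the free critical point occupies position $k_0$ in the forward orbit of $0$'' (the case $k_0 = 0$ corresponding to the line $\{b=0\}$, where the free critical point is $\infty = f(0)$). The Thurston/Epstein transversality principle for postcritically finite rational maps (see \cite{EpsteinTransversality,LWVcor}) implies that this curve crosses $\cS_p$ transversally at the type II center; this is the same transversality that underlies the count of $\eta'_{II}(p)$ as a sum of reduced intersection multiplicities in \cite[Theorem~1.2]{KiwiRees}. Therefore $H|_{\cS_p}$ has a simple zero at the center, and so does $G|_{\cS_p}$. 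The main difficulty will be ensuring that the transversality principle applies uniformly, particularly in the degenerate case $b_0 = 0$, where the orbit-relation curve degenerates to the coordinate line $\{b = 0\}$ and one must verify directly that $\cS_p$ is transverse to it at $(a_0, 0)$.
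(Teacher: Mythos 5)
Your proposal is correct and takes essentially the same approach as the paper's own proof: both factor $G$ by the chain rule so that the vanishing is carried by the single factor $f'_{a,b}(\omega_j)$ (equivalently $\omega_j - \nu$, or $b$ when $j=1$), then invoke the transversality of the critical-orbit-relation curve with $\cS_p$ from \cite{LWVcor}/\cite{EpsteinTransversality} to conclude that this factor has a simple zero on $\cS_p$. Your more explicit unwinding of the chain rule and the separate treatment of $b_0=0$ are expository variants of the paper's statement ``there exists a non-vanishing $u$ with $G = u \cdot f'_{a,b}(\omega_j)$'' (which subsumes $j=1$, i.e.\ $\nu = \omega_1 = \infty$, once $f'$ at $\infty$ is read in the chart $w = 1/z$, giving $f'(\infty)=b$); your closing caveat about uniformity is not a genuine obstruction, since transversality of the algebraic curve $\{\omega_1 = \nu\} = \{b=0\}$ with $\cS_p$ is exactly one instance of the same cited result.
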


\begin{proof}
  Let $\nu=\nu(a,b):=-2a/b$ be the free critical point of $f_{a,b}$
  and  $\nu_j := f^{\circ j}_{a,b} (\nu).$
  Given a zero $c=(a,b)$ of $G$, there exists
  $1 \le j < p$ such that
  $\omega_j(a,b) = \nu(a,b)$.
  Close to $c$, there exists a non-vanishing  holomorphic function
  $u(a,b)$ such that $$G(a,b)=u(a,b) \cdot f'_{a,b}(\omega_j(a,b)).$$
  Since $\nu(c)$ is a simple critical point of $f_c$, 
  the multiplicities of $c \in \cS_p$ as a zero of $\omega_j - \nu:\cS_p \to \C$
  and as a zero of $G$, coincide. According to~\cite{LWVcor}, the intersection
  at $c$ of the curves  
  $\omega_j-\nu=0$ and $\cS_p$ (i.e. $\omega_p-\omega_0=0$) is transversal.
  Hence, $c \in \cS_p$ is a simple zero of $\omega_j-\nu:\cS_p \to \C$, and therefore, it is a simple zero of $G$. 

\end{proof}

\section{Parabolic rescalings}
\label{s:parabolic}
Our asymptotics of transversality lemma
is closely related to \emph{rescaling limits}.
Following Rees and Stimson~\cite{Stimson}, as one approaches a puncture along $\cS_p$, an
appropriate iterate in appropriate coordinates, converges to a parabolic
quadratic rational map. This phenomenon was notably exploited by Epstein in~\cite{EpsteinBounded}, and
further studied by De Marco~\cite{DeMarcoQuadratic},
Nie and Pilgrim~\cite{NiePilgrimBounded}, Luo~\cite{LuoTree}, and the author~\cite{KiwiRescaling}, among others.
The aim of this section is to state related results,
all of them  well known, which will be used
to prove the Main Lemma.

Consider
\begin{eqnarray}
  \label{eq:cfamilies}
  R_v(z) &:=& v + \dfrac{z^2}{z+1}, \quad v \in \C.\\
  Q_c(z) &:=& z^2 +c, \quad c \in \C.
\end{eqnarray}
The family $\{R_v\}$ is a parametrization of $\per_1^{cm}(1) \subset \cM_2^{cm}$,
the curve formed by critically marked
quadratic rational maps with a multiple fixed point.
The maps $R_v$ are normalized so that the multiple fixed point is  $z=\infty$ and $z=-1$ maps to $z=\infty$. We regard $\omega=0$ as the marked critical point. Note that the other critical point
of $R_v$ is $\omegap:=-2$.

The quadratic family $\{Q_c\}$ is $\per_1^{cm}(0)$, the curve formed by critically marked quadratic rational maps with a super-attracting fixed point. For $Q_c$, the superattracting fixed point is  at $\infty$ and
the marked critical point $\omega=0$.

The Main Lemma is, in a certain sense, a manifestation of well known
transversality results for the families $\{R_v\}$ and $\{Q_c\}$.
This manifestation is through (parameter) rescaling limits. 
A version of the following result is 
contained in Stimson's thesis~\cite{Stimson}, its proof is also sketched in
\cite[Section 7.4]{ReesAsterisque} (c.f. ~\cite{EpsteinBounded,DeMarcoQuadratic}). 

\begin{theorem}
  Consider a puncture $x$ of $\cS_p $.
  Then there exist $q_x \ge 2$, $v_x \in \C$
  and a holomorphic family of M\"obius transformation $\{M_f\}$ parametrized by $f \in \cE_x^*$ such that  $M_f(\omega)=0$ and
  $$M_f^{-1} \circ f^{\circ q_x} \circ M_f \to R_{v_x}, \text{ as } f \to x,$$
  uniformly in compact subsets of $\C \setminus \{-1\}$.
    Moreover, under iterations of $R_{v_x}$, one of the following holds:
  \begin{itemize}
  \item the critical point $\omega=0$ of $R_{v_x}$ has period $p/q_x$.
  \item the critical point $\omega=0$ of $R_{v_x}$ eventually maps onto the multiple fixed point $\infty$.
  \end{itemize}
\end{theorem}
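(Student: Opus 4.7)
The plan is to extract the rescaling limit using the non-Archimedean dynamical framework from \cite{KiwiPuiseuxQuadratic}. First I would parameterize a punctured neighborhood $\cE_x^* \subset \widehat{\cS_p}$ by a punctured disk $\{0 < |t| < \epsilon\}$ via a local uniformizer $t$, producing a holomorphic family $\{f_t\}$ converging to $x$ at $t = 0$. I would view this family as a single map $f \in \Ratdc(\L)$ over the field $\L$ of Laurent (or Puiseux) series in $t$; the puncture then corresponds to analyzing reductions on the Berkovich projective line $\poneberk_\L$, and the hypothesis $f \in \cS_p$ translates into the statement that $\omega = 0 \in \L$ is $f$-periodic of exact period $p$ with orbit $\omega_0 = 0, \omega_1, \ldots, \omega_{p-1}$.

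Next I would extract the rescaling data from a distinguished periodic type-II vertex $\xi \in \poneberk_\L$: namely, the type-II ancestor of the orbit of $\omega$ at which the reduction of some iterate $f^{\circ q}$ is a nonconstant quadratic rational map with a multiple fixed point. Its period under $f$ is the desired $q_x$; it divides $p$ because $\omega$ is $p$-periodic, and it is at least $2$ because, from the multiplier information recalled in Section~\ref{s:preliminaries}, the fixed points of $f$ itself degenerate at $x$ to multipliers $(-1,-1)$ along $\cL_-$ or to a reciprocal pair in $\C \setminus \{0,-1\}$ along $\cL_+$, neither compatible with the $(1,1,\mu)$ pattern of maps in the $R_v$ family. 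I would then take $M_f = M_t$ to be the Möbius family over $\L$ carrying the Gauss point of $\poneberk_\L$ to $\xi$, normalized so that $M_t(0) = 0$ (using that $\omega = 0$ projects through $\xi$ to $0$ at the Gauss point) and $M_t(-1)$ is the distinguished $f^{\circ q_x}$-preimage of $M_t(\infty)$. The reduction of $M^{-1} \circ f^{\circ q_x} \circ M$ at the Gauss point is then the desired map $R_{v_x}$, and uniform convergence on compacts of $\C \setminus \{-1\}$ follows from continuity of reduction away from the sole pole at $z = -1$.

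For the ``moreover'' part, I would trace the orbit of $\omega = 0$ under $R_{v_x}$: the $k$-th iterate is the reduction of $M_t^{-1}(\omega_{k q_x}(t))$ as $t \to 0$. Since $\omega_p \equiv 0$ in $\L$, the orbit must close up after at most $p/q_x$ steps, and there are only two ways for this to happen: either every intermediate $M_t^{-1}(\omega_{k q_x})$ for $1 \leq k < p/q_x$ has a finite reduction distinct from $0$ (so $\omega$ has period exactly $p/q_x$ under $R_{v_x}$), or some $\omega_{k q_x}(t)$ approaches $M_t(\infty)$ in $\L$ so that its reduction equals $\infty$ (landing on the multiple fixed point and being absorbed there for all subsequent iterates).

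The main obstacle is establishing existence and essential uniqueness of the distinguished type-II vertex $\xi$ with a multiple-fixed-point reduction: this is the parabolic rescaling theorem of Stimson, with related treatments by Epstein, Rees, and De Marco cited in the text. In the Puiseux-series framework, it reduces to the structural analysis of the finite invariant simplicial tree of $f$ in $\poneberk_\L$ and the classification of reductions at its periodic vertices, exactly as carried out in \cite{KiwiPuiseuxQuadratic}; the subtle point is to rule out alternative reductions (for instance, into the family $\{Q_c\}$) by exploiting the exact-period-$p$ constraint imposed by $\cS_p$.
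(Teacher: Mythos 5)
The paper does not actually prove this theorem: it is attributed to Stimson's thesis, with a sketch in Rees's \cite{ReesAsterisque} and related treatments by Epstein \cite{EpsteinBounded} and De~Marco \cite{DeMarcoQuadratic}, all of which are classical complex-analytic arguments (normal families, M\"obius degenerations). Your proposal is therefore a genuinely different route --- a self-contained non-Archimedean proof built on the structure theory of \cite{KiwiPuiseuxQuadratic}. The paper itself acknowledges this route is available (``Although one could implement a non-Archimedean version of the computations leading to Proposition~\ref{p:stimson}, for simplicity, we prefer to employ this complex dynamics result'') but opts not to pursue it. Worth flagging: the paper's Puiseux-series formulation of the parabolic rescaling (Lemma~\ref{l:parabolic-family}) is \emph{derived from} the complex-analytic Proposition~\ref{p:stimson}, so carrying through your plan would reverse that logical dependence. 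This is consistent and even desirable, provided the structure theory you lean on from \cite{KiwiPuiseuxQuadratic} is itself independent of Stimson's complex result (it is: that paper works directly over $\mathbb{L}$).

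Two places where your outline could be tightened. First, your argument that $q_x \geq 2$ via ``fixed-point multiplier patterns'' is loose: the fixed points of $f$ and the fixed points of the reduction of $f^{\circ q_x}$ at $\xi$ live at different scales, so the comparison is not immediate. The cleaner argument, implicit in the paper's development, is via limbs: by Lemmas~\ref{l-first-order} and \ref{l:periodic-correspond}, any Puiseux parameter for a branch of $\cS_p$ ($p\geq 3$) lies in an order-$q$ limb $\cL_\theta$ with $q\geq 2$, and Proposition~\ref{p:level0} then hands you a period-$q$ cycle of closed balls $B_0,\dots,B_{q-1}$ whose boundary point $\partial\overline{B_0}$ is your distinguished vertex $\xi$; thus $q_x=q\geq 2$ by construction. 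Second, the dichotomy in the ``moreover'' part needs that $q_x$ divides $p$ and that once an intermediate $\upomega_{kq_x}$ falls into the direction of $\infty$ at $\xi$, it stays there under $R_{v_x}$; the first point follows because $X=B_0$ is a $\varphi$-periodic ball containing the $p$-periodic point $\upomega$, and the second because $\infty$ is a (multiple) fixed point of $R_{v_x}$. These are exactly the ingredients of the paper's Corollary~\ref{c:minimal}, so your sketch is consistent with the machinery the paper builds --- but to be a genuine proof of the stated theorem you would need to first establish Proposition~\ref{p:level0} (or the corresponding portion of \cite{KiwiPuiseuxQuadratic}) without reference to Stimson's complex-analytic rescaling.
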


% We say that $x$ as in the theorem is in an \emph{order $q_x$ limb},
% that $R_{v_x}$ is the \emph{parabolic rescaling limit at $x$},
% and, if it is the case, $Q_{c_x}$ is the \emph{polynomial rescaling limit at $x$}.

% As mentioned above, the existence of a parabolic rescaling is contained in
% the work of Stimson (compare with Epstein). The polynomial rescaling follows
% from work by the author. In Section, we show how to deduce the above statement
% from .

The aim of this section is to state the well known parameter space
version of the above result, also contained
in Stimson's thesis. However, for the purpose of our calculations,
we need to explicitly write the dependence of $M_f$ and of $v_x$ on
the asymptotics of $\cE_x^*$ at $\cL_\pm$.
There is no claim to originality of these results.

A detailed study of the case in which $\omega$ is eventually
fixed by $R_{v_x}$ will be discussed in \S \ref{puiseux-dynamics-ss}. It is closely related
to rescaling limits which are quadratic polynomials $Q_c$
for which $\omega=0$ is periodic.

\subsection{Fixed Point Multipliers}
Generically a quadratic rational map has $3$ fixed points.
According to Epstein~\cite[Proposition~1]{EpsteinBounded} (
c.f \cite[Lemma 4.1]{MilnorQuadratic}), as $f \in \cS_p$ converges to a puncture,
one of the fixed point multipliers converges to $\infty$, the other
two converge to conjugate roots of unity of order $q \ge 2$.
Observe that as $(a,b) \equiv [a:b:1]$ converges to $[1:\gamma: 0]\in \cL_-$,
we have convergence of  $a^{-1/2}f_{a,b}(a^{1/2} z)$ to $1/z$,
uniformly in $\C^*$.
Therefore, $f_{a,b}$ has a pair of fixed point multipliers converging to $-1$.
As $(a,b) \to (0,b) \neq (0,0)$, we have convergence of $f_{a,b}$ to
$1 +b/z$, uniformly in  $\C^*$.
Thus, the fixed point multipliers of $f$
converge to $\infty$ and to the fixed point multipliers 
of $1+b/z$. If the map $z \mapsto 1+b/z$ has finite order, then its order is
at least $3$.
A straightforward calculation, (e.g. see \cite[Lemma~3.4]{KiwiRees}) yields the following:

\begin{lemma}
  \label{l:ctheta}
  Consider $\theta \in \QS$ of order $q \ge 3$.
  The fixed point multipliers of
  $z \mapsto 1 +b/z$ are $\exp(\pm 2 \pi i \theta)$ if
  and only if  $$b =-(2 \cos \pi \theta)^{-2}.$$
\end{lemma}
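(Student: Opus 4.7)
The map $g(z) := 1 + b/z = (z+b)/z$ is a M\"obius transformation, so its two fixed points carry reciprocal multipliers. The plan is to compute the sum and product of these multipliers as rational functions of $b$, and then impose that they equal $2\cos(2\pi\theta)$ and $1$, respectively.

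First I would locate the fixed points: they satisfy $z^2 - z - b = 0$, so $z_+ + z_- = 1$ and $z_+ z_- = -b$. The derivative is $g'(z) = -b/z^2$, whence the two multipliers are $\lambda_\pm = -b/z_\pm^2$. Their product is automatically
\[
\lambda_+ \lambda_- \;=\; \frac{b^2}{(z_+ z_-)^2} \;=\; 1,
\]
confirming the M\"obius constraint (and matching $e^{2\pi i \theta}\cdot e^{-2\pi i \theta} = 1$). For the sum, a direct computation using $z_+^2 + z_-^2 = (z_+ + z_-)^2 - 2 z_+ z_- = 1 + 2b$ gives
\[
\lambda_+ + \lambda_- \;=\; -b\cdot\frac{z_+^2 + z_-^2}{(z_+ z_-)^2} \;=\; -\frac{1+2b}{b}.
\]

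Next I would use the assumption on $\theta$: since the multipliers are $e^{\pm 2\pi i \theta}$, their sum equals $2\cos(2\pi\theta) = 2(2\cos^2(\pi\theta) - 1)$. Equating this to $-(1+2b)/b$ and simplifying:
\[
-(1+2b) \;=\; 2b\bigl(2\cos^2(\pi\theta) - 1\bigr) \quad\Longleftrightarrow\quad -1 \;=\; 4b\cos^2(\pi\theta),
\]
which rearranges to $b = -(2\cos\pi\theta)^{-2}$, as required. Conversely, substituting $b = -(2\cos\pi\theta)^{-2}$ into $-(1+2b)/b$ recovers $2\cos(2\pi\theta)$, so the multipliers are the two roots of $x^2 - 2\cos(2\pi\theta)\, x + 1 = 0$, namely $e^{\pm 2\pi i \theta}$.

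The only point requiring caution is the hypothesis that the order of $\theta$ is at least $3$, which rules out $\theta = 1/2$ and hence ensures $\cos(\pi\theta) \neq 0$, so that the expression $-(2\cos\pi\theta)^{-2}$ is well defined. I do not expect a serious obstacle here: the entire argument is a direct calculation using the M\"obius fixed-point formalism, and the equivalence in the statement follows from the fact that two complex numbers are determined (up to order) by their sum and product.
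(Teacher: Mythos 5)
Your proof is correct, and it is the same elementary computation the paper omits (the paper cites \cite[Lemma~3.4]{KiwiRees} and calls it a ``straightforward calculation''). You correctly reduce the problem to matching the sum and product of the multipliers of the two affine fixed points of $z\mapsto(z+b)/z$ against $2\cos(2\pi\theta)$ and $1$, and the restriction to order $q\ge 3$ is exactly what keeps $\cos(\pi\theta)\neq 0$ so the formula is well defined.
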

For $\theta \in \Q/\Z$ of order at least $3$, let
$$c_\theta := -(2 \cos \pi \theta)^{-2}.$$
 Every point in $\ocS_p \cap \cL_+$  is of the form $[0:c_\theta:1]$ for
some $\theta \in \QS$ of order at least~$3$.

\subsection{Parabolic Parameter Rescalings}
Loosely, the term ``parameter rescalings'' refers to families of maps that arise in suitable partial compactifications of moduli space.
For example, De Marco~\cite{DeMarcoQuadratic} has shown that
the resolution of moduli space iterate maps (for quadratic rational maps) is achieved exactly in the compactification of moduli space by
all \emph{parabolic} rescaling limits. 
A nice illustration of this phenomena comes from a straightforward computation which we leave to the reader:

\begin{proposition}
  Assume that $(a,b) \equiv [a:b:1]$ converges to $[1:\gamma: 0]\in \cL_-$
  with $\gamma \neq 0$, then
  $$\gamma \cdot f^{\circ 2}_{a,b} ({z}/{\gamma}) \to R_{\gamma},$$
  uniformly in compact subsets of $\C \setminus \{-1\}$. 
\end{proposition}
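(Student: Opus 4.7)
The plan is a direct two-step computation, using that the projective convergence $[a:b:1]\to [1:\gamma:0]$ with $\gamma\neq 0$ translates into $a\to\infty$ together with $b/a\to\gamma$; equivalently $b=\gamma a + o(a)$. All of the analysis takes place on a compact subset $K\subset \C\setminus\{-1\}$, with a fixed neighborhood of $-1$ deleted.

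First I would compute the inner iterate. Substituting gives
$$f_{a,b}(z/\gamma) \;=\; 1+\frac{b\gamma}{z}+\frac{a\gamma^{2}}{z^{2}} \;=\; \frac{a\gamma^{2}}{z^{2}}\Bigl(1+z\Bigr)\;+\;\Bigl(1+\bigl(\tfrac{b}{a}-\gamma\bigr)\tfrac{a\gamma}{z}\Bigr),$$
where the bracketed remainder is $o(a)$ uniformly on $K$. Since $1+z$ is bounded away from $0$ on $K$, the intermediate value $w_{1}:=f_{a,b}(z/\gamma)$ tends to $\infty$ of order $a$, uniformly in $z\in K$.

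Next I would apply $f_{a,b}$ to $w_{1}$. Because $w_{1}\to\infty$ uniformly, one can use the expansion at infinity: $f_{a,b}(w_{1}) = 1 + b/w_{1} + a/w_{1}^{2}$. The last term is of order $1/a$ and drops out. For the middle term,
$$\frac{b}{w_{1}} \;\sim\; \frac{\gamma a}{\,a\gamma^{2}(1+z)/z^{2}\,} \;=\; \frac{z^{2}}{\gamma(1+z)},$$
again uniformly on $K$. Hence $f^{\circ 2}_{a,b}(z/\gamma)\to 1 + z^{2}/(\gamma(1+z))$, and multiplying by $\gamma$ produces $\gamma + z^{2}/(z+1) = R_{\gamma}(z)$.

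The only delicate point is the exclusion of $z=-1$: there the dominant coefficient $1+z$ vanishes, so $w_{1}$ no longer escapes to infinity and the simple asymptotic expansion of $f_{a,b}$ near $\infty$ is not applicable. Restricting to compact subsets of $\C\setminus\{-1\}$ keeps $1+z$ uniformly bounded away from $0$, which is what makes both approximations above uniform and yields the claimed convergence. No genuine obstacle arises; the computation is mechanical once the rescaling is set up.
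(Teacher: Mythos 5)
The paper explicitly leaves this as ``a straightforward computation which we leave to the reader,'' so there is no author argument to compare against; your two-step rescaling computation is correct and is the expected one.

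One small cleanup: the parenthetical claim that the additive remainder $1 + (b/a - \gamma)\,a\gamma/z$ is $o(a)$ uniformly on $K$ is not literally true if $0 \in K$, since the $1/z$ factor is unbounded there (for instance, take $|z| \sim 1/a$ and $b/a - \gamma \sim a^{-1/2}$). What is true uniformly --- and is all the argument actually uses --- is that this remainder is small \emph{relative to} the main term $a\gamma^{2}(1+z)/z^{2}$, whose $1/z^{2}$ singularity dominates the remainder's $1/z$ near the origin while $1+z$ stays bounded away from $0$ on $K$. This gives $w_{1}\to\infty$ uniformly on $K$, and your subsequent ratio estimates, which only need $z^{2}/(1+z)$ to be bounded on $K$, then close the argument as written.
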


That is, $\cL_-$ \emph{is} identified with $\per_1^{cm}(1) \setminus \cS_1$
via a period $2$ rescaling limit.

The corresponding result for $\cL_+$ can be roughly described as follows. Let $b_0 \in \C$  be such that $z \mapsto 1+b_0/z$
has order $q \ge 3$. If we blow up $\P^2_\C$ once at $[0:b_0:1]$, then the exceptional divisor is naturally identified with  $\per_1^{cm}(1)$ via a period $q_0$ rescaling limit.

\begin{proposition}
  \label{p:stimson}
  Consider $\theta \in \QS$ of order $q \ge 3$.
  Let
  \begin{eqnarray*}
    \beta_\gamma (a) &=& c_\theta + \gamma a + o(a), \quad \text{ as } a \to 0,\\
     M_a (z) &=& a^{-1} c_\theta z.
  \end{eqnarray*}
  Then there exists an invertible affine function
  $v(\gamma)=A\gamma +B$ such that, for all $\gamma \in \C$, as $a \to 0$,
  $$M_a \circ f^{\circ q}_{a,\beta_\gamma (a)} \circ M^{-1}_a \to  R_{v(\gamma)},$$
uniformly in compact subsets of $\C \setminus \{-1\}$.
\end{proposition}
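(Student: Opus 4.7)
The central structural input is Lemma~\ref{l:ctheta}: the M\"obius map $f_{0,c_\theta}(z) = 1 + c_\theta/z$ has fixed-point multipliers $e^{\pm 2\pi i \theta}$, and since $\theta$ has order $q$ this forces $f_{0,c_\theta}^{\circ q} = \id$. With this in hand, I would track the orbit $z_k := f_{a,\beta_\gamma(a)}^{\circ k}(M_a^{-1}(w))$ for $k = 0,1,\ldots,q$ as $a\to 0$ on compact subsets of $\C\setminus\{-1\}$, and extract the limit $R_{v(\gamma)}(w)$ from the order-$a$ coefficient of $z_q$ using $M_a(z_q) = c_\theta z_q/a$.

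A direct substitution yields the three phases of the orbit: $z_0 = aw/c_\theta$ is small, then $z_1 = (c_\theta^2/a)(w+1)/w^2\cdot(1+O(a))$ is of order $1/a$, and a further substitution gives $z_2 = 1 + (a/c_\theta)\,w^2/(w+1) + O(a^2)$. Set $\alpha_k := f_{0,c_\theta}^{\circ k}(0)$, so $\alpha_2=1$, $\alpha_q=0$, and $\alpha_3,\ldots,\alpha_{q-1}$ are finite and nonzero. For $k = 2,\ldots,q-1$, Taylor expansion of $f_{a,\beta_\gamma(a)}$ at the regular point $\alpha_k$ gives $z_k = \alpha_k + a\,\eta_k(w,\gamma) + O(a^2)$ with the affine first-order recursion
\[
  \eta_{k+1} \;=\; f_{0,c_\theta}'(\alpha_k)\,\eta_k \;+\; \frac{\gamma}{\alpha_k} \;+\; \frac{1}{\alpha_k^2},
\]
and initial datum $\eta_2 = w^2/(c_\theta(w+1))$. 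Since $\alpha_q = 0$, one concludes $M_a(z_q) \to c_\theta\,\eta_q(w,\gamma)$ as $a\to 0$.

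Solving the linear recursion separates $\eta_q$ into a $w$-dependent part $(f_{0,c_\theta}^{\circ q-2})'(1)\cdot \eta_2(w)$ and a $w$-independent remainder depending affinely on $\gamma$. A short direct computation gives $f_{0,c_\theta}^{\circ 2}(z) = ((1+c_\theta)z + c_\theta)/(z+c_\theta)$ and hence $(f_{0,c_\theta}^{\circ 2})'(0) = 1$; differentiating $f_{0,c_\theta}^{\circ q} = \id$ at $z=1$ via the chain rule then yields $(f_{0,c_\theta}^{\circ q-2})'(1) = 1$. Therefore the $w$-dependent part reduces to exactly $w^2/(w+1)$, and the limit has the form $R_{v(\gamma)}(w) = v(\gamma) + w^2/(w+1)$ with $v(\gamma) = A\gamma + B$ affine in $\gamma$. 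Uniform convergence on compact subsets of $\C\setminus\{-1\}$ then follows phase by phase, with the excluded point $w=-1$ matching the pole of $R_v$ at $-1$ (where the leading term of $z_1$ vanishes).

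The main obstacle I expect is verifying that $A\neq 0$, so that $v$ is genuinely invertible. I plan to obtain this from the (known) transversality of the $q$-th parabolic multiplier map along $\cL_+$ at $[0:c_\theta:1]$: if $A$ vanished, the rescaling limit would depend only on $B$ and not on the approach direction $\gamma$, contradicting the existence of distinct rescaling limits $R_{v_x}$ at the various punctures of $\cS_p$ lying above this point. Equivalently, the explicit formula $A = c_\theta\sum_{k=2}^{q-1}\alpha_k^{-1}\prod_{j=k+1}^{q-1}f_{0,c_\theta}'(\alpha_j)$ can be identified, up to a nonzero chain-rule factor along the $f_{0,c_\theta}$-orbit of $0$, with the derivative of the parabolic cycle multiplier in the normal direction to $\cL_+$, which is classical.
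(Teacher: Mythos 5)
Your direct computation is correct and is a genuinely different route from the paper's. Let me first confirm the mechanics, then flag the gap.

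The orbit-tracking is sound: with $z_0 = aw/c_\theta$ one does get $z_1 = (c_\theta^2/a)(w+1)/w^2 + O(1)$, then $z_2 = 1 + (a/c_\theta)\,w^2/(w+1) + O(a^2)$, and the first-order affine recursion
$\eta_{k+1} = f_{0,c_\theta}'(\alpha_k)\eta_k + \gamma/\alpha_k + 1/\alpha_k^2$
for $2 \le k \le q-1$ follows from a Taylor expansion at the regular points $\alpha_k \notin \{0,\infty\}$ (and these are indeed regular since the $f_{0,c_\theta}$-orbit of $0$ has exact period $q$). Your identity $(f_{0,c_\theta}^{\circ q-2})'(1) = 1$ is a clean consequence of $f_{0,c_\theta}^{\circ q} = \id$ together with the computation $(f_{0,c_\theta}^{\circ 2})'(0) = c_\theta^2/(0+c_\theta)^2 = 1$, so the $w$-dependent part of $c_\theta\eta_q$ collapses to $w^2/(w+1)$ exactly, and the limit has the form $v(\gamma) + w^2/(w+1)$ with $v(\gamma) = A\gamma + B$ affine. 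Up to this point the argument is correct, elementary, and more self-contained than the paper's, which instead changes coordinates to the totally marked family $h_{\zeta,\rho}$ (fixed point at $0$ with multiplier $\zeta$) via a biholomorphism $H$, and then quotes Stimson's explicit limit $(h_{\zeta,\rho}^{\circ q}(1+z\rho)-1)/\rho \to qm + z + 1/(4(z-1/2))$ when $\zeta = e^{2\pi i\theta}(1+m\rho)+o(\rho)$.

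The genuine gap is exactly where you locate it: you do not prove $A \neq 0$. Your explicit formula $A = c_\theta\sum_{k=2}^{q-1}\alpha_k^{-1}\prod_{j=k+1}^{q-1}f_{0,c_\theta}'(\alpha_j)$ is correct, but neither of your two sketched closures actually establishes nonvanishing. The first (appeal to distinct rescaling limits $R_{v_x}$ at the various punctures above $[0:c_\theta:1]$) is circular in this paper's logic: Lemma~\ref{l:parabolic-family} (which is what one ultimately wants) is precisely the statement that $\gamma \mapsto v_\theta(\gamma)$ is an affine isomorphism, and it is that isomorphism which then separates punctures through their $v_x$; you cannot use the separation to prove the isomorphism. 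The second ("identify $A$ with the derivative of the parabolic cycle multiplier, which is classical") is the right idea morally — it is essentially what the paper's proof accomplishes — but it is not carried out: what multiplier, of what cycle, why the chain-rule factor is nonzero, and why the resulting classical nonvanishing translates into $A\neq 0$, all need to be made precise. The paper sidesteps this entirely: in multiplier coordinates, Stimson's formula exposes the $m$-dependence as the term $qm$ with coefficient $q \neq 0$, and the affine invertibility of $\gamma \mapsto m(\gamma)$ comes from the biholomorphism $H$; so invertibility of $v(\gamma) = -2qm(\gamma)+1$ is immediate. To complete your elementary argument, you would need to supply an independent proof that $A\neq 0$ (for instance, by relating $c_\theta\eta_q(0,\gamma)$ to $\partial_b\omega_q$ along $b = \beta_\gamma(a)$ as $a\to 0$ and comparing with a known transversality of the fixed-point-multiplier function near $\cL_+$), or you should simply invoke Stimson's theorem for the invertibility as the paper does.
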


This proposition is a result by Stimson, but in different coordinates.
The required change of coordinates is discussed in \cite{KiwiRees}.
We deduce the proposition from the results in \cite{KiwiRees}.
Equivalent discussions are included in~\cite{DeMarcoQuadratic,EpsteinBounded}.

\begin{proof}
Consider the family
of (totally marked) quadratic rational maps $h_{\rho,\zeta}$, defined for $(\zeta,\rho) \in \C^* \times \C \setminus \{0,-1,-2\}$ by:
\begin{eqnarray*} h_{\zeta, \rho}(z) & :=\zeta z\left(1-\dfrac{2+\rho}{2(1+\rho)} z\right)\left(1-\dfrac{2}{2+\rho} z\right)^{-1} \\ & =\zeta z\left(1-\dfrac{\rho^2 z}{4(1+\rho)\left(1+\frac{1}{2} \rho-z\right)}\right).
\end{eqnarray*}
Here, critical points and fixed points are marked.
Indeed, $h_{\zeta,\rho}$ has a  
fixed
point at $z=0$ with multiplier $\zeta$,
another fixed point at $z=\infty$, a critical point at
$1$, and another critical point at
$1+\rho$. For $\rho$ small, the third fixed point is close to $z=1+\rho/2$. Note that $h_{\zeta,\rho} \to \zeta z$, as $\rho \to 0$, uniformly in
compact subsets of $\oC \setminus \{1\}$.

There exists a sufficiently small a neighborhood $U \subset \C^2$ of $(\exp(2\pi i \theta),0)$ such
that, for all $(\zeta,\rho) \in U$ with
$\rho \neq 0$, we have that $1,  h_{\zeta,\rho}(1)$
and $h^{\circ 2}_{\zeta,\rho}(1)$ are pairwise distinct.
Thus, for all $(\zeta,\rho) \in U$
with $\rho \neq 0$, there exist  unique 
$a=a(\zeta,\rho)$ and $ b=b(\zeta,\rho)$
such that $h_{\zeta,\rho}$ is conjugate to $f_{a,b}$,
via the M\"oebius conjugacy mapping $1,  h_{\zeta,\rho}(1)$
and $h^{\circ 2}_{\zeta,\rho}(1)$ to $0,\infty,1$, respectively.
It is not difficult to conclude that 
the map $(\zeta,\rho) \mapsto (a(\zeta,\rho),b(\zeta,\rho))$ extends to a biholomorphic map $H$
from $U$ onto a neighborhood $U'$ of $(0,c_\theta)$ (e.g. see the proof Lemma~3.7 in \cite{KiwiRees}).
Under $H$, the line $\rho=0$ maps into $a=0$.

According to Stimson (Theorem~3.6 in \cite{KiwiRees}), for any $m \in \C$, if
$$\zeta = \exp(2 \pi i \theta) (1 + m \rho) + o(\rho).$$
then
$$\dfrac{h^{\circ q}_{\zeta, \rho} (1+z\rho)-1}{\rho} \to qm + z + \dfrac{1}{4(z-1/2)}$$
uniformly in compact subsets of $\C \setminus \{1/2\}$.
Note that the limit is conjugate to $R_v(w)$ with
$v=-2 q m + 1$ via the change of coordinates $z=w/2$.
Using the derivative of the biholomorphic map $H$, given $\gamma$,
there exists an invertible 
map $m(\gamma):=\alpha \gamma +\beta$ such that
 if
$$b=c_\theta + \gamma a +o(a),$$
then
$$\zeta(a,b) = \exp(2 \pi i \theta) (1 + m(\gamma) \rho(a,b)) + o(\rho(a,b)),$$
as $a \to 0$.
Hence,  there exists $M_{a,b}$ such that
$$M_{a,b} \circ f_{a,b}^{\circ q} \circ M_{a,b}^{-1} \to R_{v(\gamma)}$$
as $a \to 0$ 
where $$v(\gamma):=-2 q m(\gamma) +1.$$
Moreover, note that  $M_{a,b}$ can be chosen 
such that $M_{a,b}(0)= 0$,
$M_{a,b} (-2a/c_\theta) = -2$, and $M_{a,b}(\infty) = \infty$.
Thus, we may choose $M_{a,b} (z)=a^{-1}c_\theta z=M_a(z)$.
\end{proof}

\subsection{Local parametrization}
\label{ss:local-par}
To compute the order of various quantities at a puncture $x$ it
is convenient to work with a local parametrization of $\cE_x$.

Denote by $\pi_a: \C^* \times \C \to \C$
the projection onto the first coordinate. Then the multiplicity of $x$, denoted $\mu_x$, is the degree of the \emph{covering} $\pi_a : \cE_x^* \to D^*$ where
$D^*$ is a sufficiently small punctured disk neighborhood of $0$ or $\infty$ and, $\cE_x$ is a suitable representative of the branch corresponding to $x$.
Note that $D^*$ is a neighborhood of $0$ if $x \in \cL_+$, and
of $\infty$ if $x \in \cL_-$.
Since all degree $\mu_x$ coverings of a punctured disk are equivalent, it follows that there exists $\varepsilon >0$ and a power series:
$$b_x(s):=\sum_{j \ge j_0} c_j s^{j},$$
convergent in $\D_\varepsilon^*$ such that
$$\begin{array}{ccc}
  \D_\varepsilon^*&\to &\cE_x^*\\
  s &\to &  (s^{\pm \mu_x},  b_x(s))
\end{array}
$$
is a conformal isomorphism, for a choice of $\cE_x$.
The series $b_x(s)$ is unique up to a $\mu_x$-root of unity $\eta$. That is, up to replacement of $c_j$ by $\eta^{\pm j} c_j$, for all $j$.
We say that $(s^{\pm \mu_x}, b_x(s))$ is a \emph{local parametrization of $\cS_p$ at $x$}, and sometimes we will just say that $b_x$ is a local parametrization.

For future reference, we record a direct consequence of
Lemmas~\ref{l-degree} and~\ref{l:ctheta}:

\begin{lemma}
  \label{l-first-order}
  If $(s^{-\mu},b(s))$ is a local parametrization of a branch at $\cL_-$,
  then, as $s \to 0$,  $$b(s) = \dfrac{c}{s^{\mu}} + o(s^{-\mu}).$$

  If $(s^{\mu},b(s))$ is a local parametrization of a branch at $\cL_+$,
  then, as $s \to 0$,
  $$b(s) = c_\theta + c {s^{\mu}} + o(s^{\mu}).$$
\end{lemma}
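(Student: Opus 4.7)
The plan is to handle the two cases separately; in each I would pass to projective coordinates and invoke Lemma~\ref{l-degree} to rule out forbidden limit points on $\cL_\pm$.

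For a puncture at $\cL_-$, I would rewrite the parametrization $(s^{-\mu}, b(s))$ in homogeneous coordinates as
$$[s^{-\mu}: b(s): 1] = [1: s^\mu b(s): s^\mu].$$
As $s \to 0$ this tends to a point $[1: c: 0] \in \cL_-$ where $c := \lim_{s \to 0} s^\mu b(s)$. Lemma~\ref{l-degree} rules out $[1:0:0]$ and $[0:1:0]$ from $\ocS_p$, which forces $c \neq 0$ (otherwise the limit would be $[1:0:0]$) and $c \neq \infty$ (otherwise, after rescaling, the limit would be $[0:1:0]$). Thus $c \in \C^*$, and this is exactly the asserted $b(s) = c/s^\mu + o(s^{-\mu})$.

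For a puncture at $\cL_+$, the parametrization $(s^\mu, b(s))$ approaches $[0: b(0): 1]$. By Lemma~\ref{l-degree}, $[0:0:1] \notin \ocS_p$, so $b(0) \neq 0$. Next, I would note that as $(a,b) \to (0, b(0))$ the map $f_{a,b}$ converges uniformly on compact subsets of $\C^*$ to the M\"obius transformation $z \mapsto 1 + b(0)/z$, so two of the three fixed-point multipliers of $f_{a,b}$ converge to the fixed-point multipliers of the limit map. By Epstein's description of fixed-point multipliers at punctures (recalled at the start of \S~\ref{s:parabolic}), these limits are conjugate roots of unity, and as observed in the discussion preceding Lemma~\ref{l:ctheta} the limiting M\"obius map must have order at least $3$. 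Applying Lemma~\ref{l:ctheta} then yields $b(0) = c_\theta$ for the corresponding $\theta \in \Q/\Z$ of order $\geq 3$, and one writes $b(s) = c_\theta + c s^\mu + o(s^\mu)$ to record the leading non-constant term of the expansion.

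The $\cL_-$ case is essentially immediate from the exclusions in Lemma~\ref{l-degree}. The only additional ingredient in the $\cL_+$ case is promoting ``a M\"obius limit of $f_{a,b}$ exists'' to ``its multipliers form a conjugate pair of roots of unity of order $\geq 3$'', which is done by citing Epstein's multiplier result; at that point Lemma~\ref{l:ctheta} identifies the constant as $c_\theta$. I do not expect any real obstacle: the proof is a direct combination of the two cited lemmas.
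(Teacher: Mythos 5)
Your $\cL_-$ argument is the same as the paper's (multiply through by $s^\mu$ and use the exclusions $[1:0:0],[0:1:0]\notin\ocS_p$), and your identification of the constant $b(0)=c_\theta$ via Epstein's multiplier result together with Lemma~\ref{l:ctheta} is a natural fleshing out of the paper's terse remark that ``the second case is similar.'' However, your last step in the $\cL_+$ case — writing $b(s)=c_\theta+cs^\mu+o(s^\mu)$ ``to record the leading non-constant term'' — silently assumes that the leading non-constant term of $b(s)$ has order at least $\mu$. That is not a notational convenience: when $\mu_x>1$ nothing in the notion of local parametrization forces $\ord_s(b-c_\theta)\ge\mu$ (a germ $(s^\mu,\,c_\theta+s+\cdots)$ is a perfectly good branch of a plane curve), and Lemma~\ref{l-degree} only forbids specific limit \emph{points}, not tangent \emph{directions}. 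Geometrically, $b(s)-c_\theta=O(s^\mu)$ is the assertion that the branch $\cE_x$ is not tangent to $\cL_+$; analytically it is exactly what is used later to place the Puiseux series $\beta_0$ inside the limb $\cL_\theta$, which has radius $|t|$ rather than $1$. The paper's own one-line proof is equally silent on this point, so you are consistent with the source, but a complete argument appears to require the dynamical input that the tangent direction of $\cE_x$ at the puncture corresponds to a \emph{finite} value of $v$ in the parabolic rescaling limit $R_v$ (Proposition~\ref{p:stimson} and its refinements); the two elementary lemmas you cite do not rule out tangency to $\cL_+$, so the concluding assessment ``I do not expect any real obstacle'' is too optimistic.
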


\begin{proof}
  In the first case $[s^{-\mu}:b(s):1] = [1: s^{\mu} b(s) : s^\mu] \subset \cS_p$
  for all $s \neq 0$. Hence, as $s\to 0$, we have that $s^{\mu} b(s) \not\to 0$,
  by Lemma~\ref{l-degree}.
  The second case is similar. 
\end{proof}

\subsection{Complex Transversality}

The Main Lemma is, in a certain sense,
a manifestation of the following transversality results for the families
$\{R_v\}$ and $\{Q_c\}$.

\begin{lemma}
  \label{l:transversality}
  Consider $c_0 \in \C$ and $q_0 \ge 1$ such that
  $Q_{c_0}^{\circ q_0} (0) =0$ and $q_0$ is the smallest positive integer with this property. Then
  $$\dfrac{d Q^{\circ q_0}_{c}(0)}{dc} (c_0) \neq 0.$$
  
  Consider $v_0 \in \C$ and $\ell_0 \ge 1$
  such that $R_{v_0}^{\circ \ell_0} (0) = 0$ or $-1$ and $\ell_0$ is the smallest positive integer
  with this property. Then
  $$\dfrac{d R^{\circ \ell_0}_{v}(0)}{dv} (v_0) \neq 0.$$
\end{lemma}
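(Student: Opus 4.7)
I would treat the two assertions separately.

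For the quadratic family, this is the classical simplicity theorem of Gleason already recalled in \S\ref{counting-polynomial-ss}. Setting $F_n(c) := Q_c^{\circ n}(0) \in \Z[c]$, the recursion $F_{n+1}(c) = F_n(c)^2 + c$ gives $F_{n+1}'(c) = 2 F_n(c) F_n'(c) + 1$, so by induction $F_n'(c) \equiv 1 \pmod 2$. Any primitive common factor $h \in \Z[c]$ of $F_n$ and $F_n'$ would satisfy $\bar h = 1$ in $\F_2[c]$ (from $\bar h \mid \bar F_n' = 1$); since $h$ also divides the monic polynomial $F_n$, its leading coefficient must be $\pm 1$, which is odd, contradicting $\bar h = 1$ unless $\deg h = 0$. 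Hence $F_{q_0}$ has only simple roots, and the first assertion follows.

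For the parabolic family $R_v$, the plan is to apply transversality of critical orbit relations in the one-dimensional slice $\per_1^{cm}(1) = \{R_v\}$ of $\cM_2^{cm}$. In the periodic case $R_{v_0}^{\circ \ell_0}(0) = 0$, the parameter $v_0$ is the center of a component $\cH \subset \{R_v\}$ where $0$ is attracted to a superattracting cycle of period $\ell_0$. The implicit function theorem applied at $(v_0, 0)$ to $R_v^{\circ \ell_0}(z) - z = 0$, using $(R_{v_0}^{\circ \ell_0})'(0) - 1 = -1 \neq 0$, produces a persistent periodic point $z_0(v)$ with $z_0(v_0) = 0$; differentiating $R_v^{\circ \ell_0}(z_0(v)) = z_0(v)$ at $v_0$ then yields $\partial_v R_v^{\circ \ell_0}(0)|_{v_0} = z_0'(v_0)$, and the desired non-vanishing reduces to the classical fact that the multiplier map $\lambda: \cH \to \D$ is a proper holomorphic map of degree one, with $v_0$ as its unique simple preimage of $0$. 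In the pre-parabolic case $R_{v_0}^{\circ \ell_0}(0) = -1$, the critical orbit lands on the multiple fixed point $\infty$ one iterate later; this is a Misiurewicz-type condition in $\per_1^{cm}(1)$, and the required transversality follows from Epstein's quadratic-differential techniques for $1$-parameter families with a parabolic fixed point (cf.\ \cite{EpsteinBounded, EpsteinTransversality}).

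The step I expect to be most delicate is the pre-parabolic case. A naive algebraic mimicry of Gleason's argument---writing $R_v^{\circ n}(0) = A_n(v)/B_n(v)$ with $A_n, B_n \in \Z[v]$ coprime, so that $C_n := A_n + B_n$ satisfies $C_{\ell_0}(v_0) = 0$, and observing via induction modulo $2$ that $\bar A_n \equiv v$, $\bar B_n \equiv v+1$, hence $\bar C_n \equiv 1$ for $n \ge 2$---does not conclude, because these polynomials are not monic: a direct calculation shows that the leading coefficient of $A_n$ is divisible by arbitrarily high powers of small primes, so Gleason's leading-coefficient trick fails. The analytic route via Epstein's parabolic transversality therefore seems unavoidable for the second case.
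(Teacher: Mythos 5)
Your treatment of the quadratic family is correct and matches the paper's cited route: Gleason's mod-$2$ argument with $F_n(c)=Q_c^{\circ n}(0)\in\Z[c]$ and $F_n'\equiv 1\ (\mathrm{mod}\ 2)$ gives simplicity of the roots. Your treatment of the periodic case $R_{v_0}^{\circ\ell_0}(0)=0$ via the degree-one multiplier parametrization of the hyperbolic component is also what the paper does. The issue is the pre-parabolic case $R_{v_0}^{\circ\ell_0}(0)=-1$, which you correctly flag as the delicate step, but do not actually prove. Observing that the Gleason-style arithmetic fails because the numerator/denominator polynomials are not monic is a useful sanity check, but then pointing to ``Epstein's quadratic-differential techniques'' is a citation, not an argument: Epstein's infinitesimal-rigidity machinery needs extra care precisely in one-parameter families with a persistently parabolic fixed point (the whole family $\{R_v\}$ has a multiple fixed point at $\infty$), and you have not verified that the relevant nondegeneracy statement is actually a consequence of the sources you cite. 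As written, the pre-parabolic case has a gap.

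The paper's argument for that case is genuinely different and worth comparing. It does not stay inside the one-dimensional slice $\{R_v\}$. Instead it works in the two-dimensional moduli space $\cM_2^{cm}$: it takes the critical-orbit-relation curve $\cX$ of maps whose marked critical point lands on a fixed point in $\ell_0+1$ iterates, invokes Levin--Shen--van Strien (\cite{LWVcor}) to get smoothness of $\cX$ near $R_{v_0}$, and then proves transversality of the intersection $\cX\cap\per_1^{cm}(1)$ by contradiction: if not transversal, then for $\lambda<1$ close to $1$ there would be two distinct points of $\cX\cap\per_1^{cm}(\lambda)$ near $R_{v_0}$, and the Ha\"issinsky--Tan pinching theorem (\cite{HaTanPinching}) forces these two to be hybrid equivalent and hence equal, a contradiction. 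Since the COR condition $R_v^{\circ\ell_0+1}(0)=\infty$ is equivalent to $R_v^{\circ\ell_0}(0)=-1$ on $\{R_v\}$ (because $-1$ is the only finite preimage of $\infty$), this transversality yields exactly the nonvanishing of $dR_v^{\circ\ell_0}(0)/dv$ at $v_0$. To close your gap, you should either carry out the Epstein argument carefully in the parabolic setting or switch to this two-dimensional transversality-plus-pinching route.
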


\begin{proof}
For the quadratic family $\{Q_c\}$, the lemma follows from a result by Gleason or by Douady and Hubbard's parametrization of hyperbolic components  (e.g. see~\cite[Expos\'e XIX]{OrsayNotes}).
For the parabolic family $\{ R_v\}$, if $\omega=0$ is
periodic under $R_{v_0}$, then Douady and Hubbard's parametrization of hyperbolic
components applies.
If $R_{v_0}^{\circ \ell_0} (0) = -1$, then we have
the critical orbit relation
$R_{v_0}^{\circ \ell_0+2} (0)= R_{v_0}^{\circ \ell_0+1} (0)$.
Consider the critical orbit relation curve  $\cX$ contained in
$\cM^{cm}_2$ formed by all maps
such that the marked critical
point maps in $\ell_0+1$ to a fixed point. According to~\cite{LWVcor},
$\cX$ is smoothly embedded near $R_{v_0}$. Note that $R_{v_0} \in \operatorname{Per}^{cm}_1(1) \cap \cX$.
It is sufficient
to show that the intersection is transversal. Otherwise,
for $\lambda <1$, sufficiently close to $1$,
$\operatorname{Per}^{cm}_1(\lambda) \cap \cX$ has at least two parameters
close to $R_{v_0}$, where $\operatorname{Per}^{cm}_1(\lambda)$
is the curve of maps with fixed point of multiplier $\lambda$.
As an application of~\cite{HaTanPinching}, these two parameters are hybrid equivalent in a neighborhood
of the Julia set and conformally equivalent in the complement. Hence, these
two elements of $\cM_2^{cm}$ are in fact the same. This contradiction
shows that the intersection is transversal.
\end{proof}

% \subsection{Polynomial parameter rescalings}

% \begin{proposition}
%   Consider $x$ a puncture with a polynomial rescaling of period $p'$. 
%   Let $(s^{\mu_x},b_x(s))$ be a local parametrization of $\cS_p$ at $x$. Then, there exists $m_x, n_x \ge 1$, and an invertible affine function $v(\gamma) = A\gamma + B$,  such that 
%     for all $\gamma \in \C$, 
%   $$M_s \circ f^{p'}_{(a(s),b_\gamma(s))} \circ M_s^{-1} (z) \to z^2 + c_x(\gamma),$$
% as $s \to 0$,
% uniformly in compact subsets of $\C$, where
% \begin{eqnarray*}
%   (a(s), b_\gamma(s)) & = &(s^{\pm \mu_x}, b_x(s) + \gamma s^{m_x}),\\
%   M_s (z) &=& b_x(0) s^{n_x} z.
% \end{eqnarray*}
%  \end{proposition}

% Observe that $n_x$ is the order of the dynamical rescaling while $m_x$ is the one of the parameter rescaling. 
% Moreover: RELATION $m_x$ and $n_x$. 

\section{Puiseux Series Dynamics}
\label{puiseux-dynamics-ss}
\label{s:puiseux}

%The plane algebraic curve $\cS_p$
% is the vanishing locus of a polynomial $P(a,b) \in \C[a,b]$
% of degree $\nu'(p)/3$.

% , near $\cL_+$ (resp. $\cL_-$), consists of the solutions $(a,b)$, with $a$ (resp. $1/a$) close to $0$, 
% of the equation
% $$f_{a, b}^{\circ p}(0) =0$$
% such that $f_{a, b}^{\circ j}(a) \neq 0$ for all $3\le q|p$.
% Regarding $a$ (resp. $1/a$)
% as a transcendental element $t$ over $\C$, one may solve
% the above equation in an algebraic closure of $\C((t))$,
% known as the field of formal Puiseux series
% $\puiseux$.
% A basic result on germs of plane algebraic curves says that
% the solutions in $\puiseux$ and local paramatrizations of branches
% of $\cS_p$ at $\cL_\pm$ are in correspondence.
An algebraic closure of the field of \emph{ formal Laurent series} $\laurent$ with coefficients in $\C$ is the \emph{field of formal Puiseux series} $\puiseux$ (e.g. see~\cite{LibroCasas}). One may realize $\puiseux$ as  the injective limit, with respect to the obvious inclusions, of the fields $\laurentm$ for  $m \in \N$.
Given a non-zero element
$$z = \sum_{n \geq n_0} c_n t^{n/m} \in \puiseux,$$
its \emph{order at $t=0$} is
$$\ord_0 z := \min \{ n/m \ge n_0/m : c_n \neq 0 \}.$$
It endows $\puiseux$ with the  non-Archimedean absolute value:
$$\val{z} := \exp(-\ord_0 z).$$

Local parametrizations of  plane algebraic curve germs
are closely related to Puiseux series.
In our case, consider a puncture $x$ with multiplicity $\mu_x$ and
a local parametrization $(s^{\pm \mu_x},b_x(s))$
where
$$b_x(s)= \sum_{j\ge j_0} c_j s^j.$$
Then the series
$$\beta_x (t) := \sum_{j \ge j_0} c_j t^{j/\mu_x} \in \C((t^{1/\mu_x})) \subset \puiseux$$
is called \emph{a Puiseux series for the branch $\cE_x$}.
A Puiseux series for $\cE_x$ is unique up to multiplication by a
$\mu_x$-root of unity.
Namely, given such a root of unity $\eta$, the series with coefficients
$\eta^j c_j$ is also a Puiseux series for $\cE_x$. All the Puiseux
series for $\cE_x$ are of this form.

Regard rational functions $\varphi \in \puiseux (z)$ as dynamical
systems acting on the projective line over $\puiseux$.
The Puiseux series for branches of $\cS_p$ at $\cL_\pm$ are dynamically defined
parameters in a  family of quadratic rational
maps $\varphi_\beta \in \puiseux (z)$.
More precisely,
given $\beta \in \puiseux$,
  let 
  $$\varphi_\beta (z) :=1 + \dfrac{\beta}{z} + \dfrac{t}{z^2} \in \puiseux (z), \quad \text{ if } \val{\beta} \le 1,$$
and  
$$\varphi_\beta (z) :=1 + \dfrac{\beta}{z} + \dfrac{1}{t z^2} \in \puiseux (z),
\quad \text{ if } \val{\beta} > 1.$$

Note that $\upomega :=0$ is a critical point of $\varphi_\beta$, for all $\beta$.
The other critical point is $\upomega':= -2t/\beta$ if $\val{\beta}\le1$
and $\upomega':=-2/t\beta$,  otherwise.

\begin{lemma}
  \label{l:periodic-correspond}
  A branch of $\cS_p$ at $\cL_\pm$ has  
  Puiseux series $\beta_0$ 
    if and only if the critical point $\upomega$
   has period exactly $p$ under iterations of
   $\varphi_{\beta_0}$. 
\end{lemma}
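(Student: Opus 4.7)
The plan is to transport the dynamical meaning of $\cS_p$ (that $\upomega = 0$ have exact period $p$ under $f_{a,b}$) across the standard Puiseux correspondence for branches of plane algebraic curves. For a puncture $x$ of multiplicity $\mu := \mu_x$ with local parametrization $(s^{\pm \mu}, b_x(s))$, set $\beta_x(t) := b_x(t^{1/\mu}) \in \C((t^{1/\mu})) \subset \puiseux$. Lemma~\ref{l-first-order} yields $\val{\beta_x} = 1$ when $x \in \cL_+$ and $\val{\beta_x} > 1$ when $x \in \cL_-$, and the two formulas defining $\varphi_\beta$ are rigged precisely so that $\varphi_{\beta_x}(z)$ is the image of $f_{a, b_x(s)}(z)$ under $s \mapsto t^{1/\mu}$, with $a = t$ in the $\cL_+$ case and $a = 1/t$ in the $\cL_-$ case. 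Consequently, the iterates $\omega_n(a,b) := f^{\circ n}_{a,b}(0)$, viewed as rational functions restricted to $\cE_x^*$, pull back under $s \mapsto t^{1/\mu}$ to the Puiseux iterates $\varphi_{\beta_x}^{\circ n}(\upomega)$; iteration commutes with parameter substitution provided one works projectively so that the appearances of $\infty$ in the orbit of $0$ (starting with $f_{a,b}(0) = \infty$) cause no trouble. In particular, $\omega_n$ vanishes identically along $\cE_x^*$ if and only if $\varphi_{\beta_x}^{\circ n}(\upomega) = 0$ in $\puiseux$.

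For the forward direction, $\cE_x^* \subset \cS_p$ forces $\omega_p \equiv 0$ on $\cE_x^*$, and $\omega_d \not\equiv 0$ on $\cE_x^*$ for every proper divisor $d \mid p$ (else $\cE_x^* \subset \cS_d$, contradicting $\cS_p \cap \cS_d = \emptyset$); this is equivalent to $\upomega$ having exact period $p$ under $\varphi_{\beta_x}$. Conversely, if $\upomega$ has exact period $p$ under $\varphi_{\beta_0}$, then $\beta_0$ satisfies the polynomial equation $\omega_p(t, \beta_0) = 0$ (or $\omega_p(1/t, \beta_0) = 0$ when $\val{\beta_0} > 1$), hence is algebraic over $\C(t)$ and, by the classical Puiseux theorem, convergent. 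Letting $\mu$ be minimal with $\beta_0 \in \C((t^{1/\mu}))$ and writing $s = t^{1/\mu}$, the pair $(a(s), b(s)) := (s^{\pm \mu}, \beta_0(s^\mu))$ (sign chosen according to $\val{\beta_0} \leq 1$ or $> 1$) is an analytic parametrization of an irreducible germ through a point of $\cL_+$ or $\cL_-$; the exact-period hypothesis places this germ in $\cS_p \setminus \bigcup_{d \mid p,\, d < p} \cS_d$, exhibiting a branch of $\cS_p$ with Puiseux series $\beta_0$.

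The main obstacle is to make rigorous the commutation of substitution and iteration when orbit points pass through $\infty$; working consistently on the projective line over $\puiseux$ (rather than on $\puiseux$ itself) resolves this cleanly, after which the correspondence is essentially tautological. A secondary technical point is the convergence claim in the converse direction, which follows from Puiseux's theorem applied to the plane algebraic curve cut out by $\omega_p = 0$.
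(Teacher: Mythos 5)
Your proof is correct and takes essentially the same route as the paper: transport the equation $f_{s^{\pm\mu},b(s)}^{\circ p}(0)=0$ across the isomorphism $s\mapsto t^{1/\mu}$ to get $\varphi_{\beta_0}^{\circ p}(0)=0$, and invoke Puiseux's theorem (convergence of formal solutions of plane curve equations) for the converse. The paper reduces to the statement about ``period dividing $p$'' and handles exact period implicitly, while you spell out the exact-period matching via disjointness of the $\cS_d$; you also explicitly verify (via Lemma~\ref{l-first-order}) that $\val{\beta_x}\le 1$ or $>1$ as $x\in\cL_+$ or $\cL_-$, ensuring the sign in $t^{\pm 1}$ matches the case distinction in the definition of $\varphi_\beta$ --- a compatibility the paper takes for granted. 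These are minor additions; the argument is the same.
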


We postpone the proof of this lemma to \S \ref{s:puiseux2complex} and
proceed towards stating 
the Puiseux series version of the Main Lemma \ref{l-main}.

For a rational map $\psi \in \puiseux (z)$ such that $\psi(\infty) \neq \infty$,
by definition, the derivative of $\psi$ at $\infty$ will be the derivative
of $\psi(1/z)$ at $z=0$.

Given $\beta$, we write
$$\upomega_j (\beta):= \varphi_\beta^{\circ j} (0).$$

Different ``normalizations'' arise according to whether $\val{\beta} \le 1$
or not. It is therefore convenient to introduce:
$$ \tau(\beta) :=
\begin{cases}
  t & \text{ if } \val{\beta}>1,\\
  1 & \text{ if } \val{\beta}\le 1.
\end{cases}
$$
When clear from context, to lighten notation, we simply write $\tau$ for
$\tau(\beta)$.

Below, in \S \ref{s:puiseux2complex}, we show that our next result is equivalent to the Main Lemma~\ref{l-main}. 

\begin{lemma}[Main Lemma, {\it \'a la Puiseux}]
  \label{main-puiseux-l}
  Assume ${\beta_0} \in \puiseux$ 
  is such that $\upomega$ has period $p$ under $\varphi_{{\beta_0}}$. 
  Then,
  \begin{equation}
    \label{eq:main-puiseux-l}
    \ord_0 \dfrac{d\upomega_p}{d\beta}({\beta_0}) =
    \ord_0 \dfrac{d\varphi^{\circ p-1}_{{\beta_0}}}{dz}(\infty)+ 2 \ord_0 \tau.
    \end{equation}
\end{lemma}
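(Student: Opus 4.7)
The plan is to unfold both sides of \eqref{eq:main-puiseux-l} via the chain rule in terms of the orbit $(\upomega_k)$ and dynamical derivatives, analyze the resulting $t$-asymptotics using the parabolic rescaling limits from \S\ref{s:parabolic}, and invoke the complex transversality of Lemma~\ref{l:transversality} to rule out leading-order cancellations. Since $\upomega_0=0$, $\upomega_1=\infty$, and $\upomega_2=\varphi_\beta(\infty)=1$ are all constant in $\beta$, iterating $\upomega_{k+1}=\varphi_\beta(\upomega_k)$ together with $\partial_\beta\varphi_\beta(z)=1/z$ yields
\[
\frac{d\upomega_p}{d\beta}(\beta_0)\;=\;\sum_{m=2}^{p-1}\frac{1}{\upomega_m}\prod_{j=m+1}^{p-1}\varphi'_{\beta_0}(\upomega_j).
\]
Since $\varphi_\beta(1/w)=1+\beta w+\delta w^2$ is regular at $w=0$ with value $\upomega_2=1$ and derivative $\beta$, applying the chain rule to $\varphi^{\circ p-1}_{\beta_0}(1/w)=\varphi^{\circ p-2}_{\beta_0}(\varphi_{\beta_0}(1/w))$ at $w=0$ gives
\[
\frac{d\varphi^{\circ p-1}_{\beta_0}}{dz}(\infty)\;=\;\beta_0\prod_{j=2}^{p-1}\varphi'_{\beta_0}(\upomega_j).
\]
Setting $\phi_m:=\prod_{j=2}^{m}\varphi'_{\beta_0}(\upomega_j)$, \eqref{eq:main-puiseux-l} is thus equivalent to the statement that $\beta_0^{-1}\sum_{m=2}^{p-1}(\upomega_m\phi_m)^{-1}$ has $t$-order exactly $2\ord_0\tau$.

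Next, I would analyze the orbit using the two regimes arising from the reduction of $\beta_0$ modulo $t$. When $|\beta_0|_o\le 1$, $\bar\beta_0=c_\theta$ has order $q\ge 3$ with $q\mid p$, and the Puiseux translate of Proposition~\ref{p:stimson} supplies a conjugacy $M_t(z)=t^{-1}c_\theta z$ under which $M_t\circ\varphi^{\circ q}_{\beta_0}\circ M_t^{-1}$ reduces to an $R_v\in\C(z)$; consequently $\upomega_{kq}=(t/c_\theta)R_v^{\circ k}(0)+O(t^{1+\epsilon})$, while $\upomega_j$ has a nonzero finite residue for $j\not\equiv 0\pmod q$. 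When $|\beta_0|_o>1$, a period-$2$ rescaling by $z\mapsto z/\gamma$ (with $\gamma$ the residue of $t\beta_0$) reduces $\gamma\varphi^{\circ 2}_{\beta_0}(z/\gamma)$ to $R_\gamma$, yielding $\gamma\upomega_{2k}=R_\gamma^{\circ k}(0)+O(t)$ together with a simple pole at $t=0$ for each odd-indexed $\upomega_{2k+1}$. Plugging these $t$-orders into the expansion above, a direct count shows that every summand $\beta_0^{-1}(\upomega_m\phi_m)^{-1}$ has $t$-order at least $2\ord_0\tau$, with equality on a specific subset of ``intermediate'' indices: $m\in\{2,\ldots,q-1\}$ in the $\cL_+$ case, and all of $\{2,\ldots,p-1\}$ in the $\cL_-$ case. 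Collecting the leading residues of these terms, the sum assembles (up to a nonzero universal factor coming from $c_\theta$ or $\gamma$) into the parameter derivative $\frac{dR_v^{\circ p/q}(0)}{dv}$ (respectively $\frac{dR_\gamma^{\circ p/2}(0)}{dv}$) at the rescaling limit; by Lemma~\ref{l:transversality} this derivative is nonzero, ruling out leading-order cancellation and pinning the order at $2\ord_0\tau$.

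The principal obstacle is the case in which the critical point of the primary rescaling limit $R_v$ does not return to $0$ but lands instead on the multiple fixed point $\infty$. There, the return of $\upomega$ to $0$ under $\varphi_{\beta_0}$ occurs only through a secondary rescaling into a polynomial family $Q_c$ inside a Mandelbrot-type copy of $R_v$-parameter space, for which the relevant transversality is the $Q_c$ case of Lemma~\ref{l:transversality}. Tracking which orbit indices contribute at leading order through this two-scale cascade, and verifying that the successive normalization factors $a^{\pm 1}=\tau^{\mp 1}$ combine cleanly to produce the correction $2\ord_0\tau$, is the delicate multi-scale bookkeeping for which the full Berkovich-level classification of quadratic rational dynamics over $\puiseux$ from~\cite{KiwiPuiseuxQuadratic} is indispensable; this phenomenon is also the source of the Mandelbrot tori in Corollary~\ref{c:mandelbrot}.
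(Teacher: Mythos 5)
Your chain-rule unfolding is algebraically correct: since $\partial_\beta\varphi_\beta(z)=1/z$ and $\upomega_2=1$ is constant, one indeed gets
\[
\partial_\beta\upomega_p(\beta_0)=\sum_{m=2}^{p-1}\frac{1}{\upomega_m}\prod_{j=m+1}^{p-1}\varphi'_{\beta_0}(\upomega_j),
\qquad
\partial_z\varphi^{\circ p-1}_{\beta_0}(\infty)=\beta_0\prod_{j=2}^{p-1}\varphi'_{\beta_0}(\upomega_j),
\]
and your broad strategy (rescaling limits plus Gleason/parabolic transversality to rule out cancellation) targets the right ingredients. However, your proposed leading-order bookkeeping has a concrete problem: in the $\cL_+$ case you claim equality of orders holds only for $m\in\{2,\dots,q-1\}$, yet the transversality quantity $\frac{d}{dv}R_v^{\circ p/q}(0)$ you want the sum to assemble into is itself a chain-rule sum with $p/q$ terms, and these correspond to contributions from the whole orbit (roughly the $m$ that are multiples of $q$, not the first $q-2$ indices). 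So the contributing index set, and the asserted ``assembly,'' would need to be recomputed; as stated the assembly cannot work when $p/q\ge 2$. Moreover, you explicitly leave the hardest case --- a non-central return followed by a secondary polynomial renormalization --- as an unfinished ``obstacle,'' but this is precisely the regime where a term-by-term order count of the unfolded sum is most delicate: individual terms can change order through the cascade, and without the geometric control on the periodic ball cycle $\{X_j\}$ (and its invariance under perturbation of $\beta$) there is nothing to organize the count.

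The paper takes a genuinely different and, in practice, cleaner route: it never unfolds the derivative into a sum. Instead it proves the spherical-norm identity $\|\partial_\beta\upomega_n\|=\|\partial_z\varphi^{\circ n-1}(\infty)\|\cdot\val{\tau}^2$ inductively at a handful of stopping times --- first at $n=q\ell_0$ (or $n=p$ in the satellite case) by commuting reduction with differentiation inside $B_0$ and invoking Lemma~\ref{l:transversality} for the $R_v$ family; then forward to $n=q'$ (the period of the primitive renormalization) by a strict-domination chain-rule estimate built on the ball geometry of Lemma~\ref{l:smallest-ball} and Corollary~\ref{c:zder}; and finally to $n=p$ via the polynomial rescaling of Proposition~\ref{p:parameter-poly} and the Gleason case of Lemma~\ref{l:transversality}. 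The reduction-commutativity mechanism replaces the need to identify which summands dominate, which is exactly the step your proposal leaves incomplete.
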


We will prove \eqref{eq:main-puiseux-l} working our way towards the $p$-th iterate,
establishing a similar formula for some lower iterates.
It will require some knowledge  about the non-Archimedean dynamics of
$\varphi_{\beta_0}$, discussed in \S \ref{s:L} and \S \ref{s:q-dynamics}. 
The substantial part of the proof is contained in
\S \ref{s:bounds} through  \S \ref{s:primitive-r}.

\subsection{From Puiseux series to complex numbers}
\label{s:puiseux2complex}
Let us first prove that parameters $\beta \in \puiseux$ such that
the critical point $\upomega$ is periodic under $\varphi_\beta$ are in correspondence with parametrizations of branches of periodic curves at $\cL_\pm$.
The proof is essentially an immediate consequence of the fact that formal
solutions $\beta \in \puiseux$ to the equation  
$f_{t^{\pm 1},b}^{\circ p}(0)=0$, in the unknown $b$, are automatically
``convergent'' (e.g. see \cite[\S 7.2]{FischerPlaneCurves}). 

\begin{proof}[Proof of Lemma \ref{l:periodic-correspond}]
  It is sufficient to show that $\beta_0$ is the Puiseux series of a branch
  of a periodic curve with period dividing $p$ if and only if
  $\upomega=0$ is periodic of period dividing $p$ under $\varphi_{\beta_0}$.
  
Assume that $(s^{\pm \mu}, b(s))$ is a local parametrization of
$\cE_x$ for some puncture $x$ at $\cL_\pm$ of a periodic curve of period dividing $p$.
Then $f_{s^{\pm \mu}, b(s)}^{\circ p} (0) =0,$
  where
  $$f_{s^{\pm \mu}, b(s)} (z) = 1 + \dfrac{b(s)}{z} + \dfrac{s^{\pm \mu}}{z^2}\in \C((s))(z).$$
  Via the isomorphism $\bar{\iota}: \C((s))(z) \to \C((t^{1/\mu}))(z)$ that sends $s$ to $t^{1/\mu}$,
  we have that $f_{s^{\pm \mu}, b(s)}(z)$ becomes $\varphi_{\beta_0}(z)$,
  where $\beta_0 = b(t^{1/\mu})$. Thus, $\varphi_{\beta_0}^{\circ p} (0) =0$.
%  Moreover, $\val{\beta_0} \le 1$ (resp. $>1$) if $x$ is a puncture at
%  $\cL_+$ (resp. $\cL_-$), by Lemma~\ref{l-first-order}.
  Hence, $\upomega=0$ is periodic with period dividing $p$, under iterations of
  $\varphi_{\beta_0}$.

  Conversely,  assume that  $\upomega =0$
  has period dividing $p$, under $\varphi_{\beta_0}$, for some
  $\beta_0\in \puiseux$.
  Let $\mu \ge 1$
  be minimal such that $$\beta_0 = \sum_{j \ge j_0} c_j t^{j/\mu} \in \C((t^{1/\mu})).$$ 
   Via $\bar{\iota}^{-1}$,
   we have that $\varphi_{\beta_0}(z)$ becomes
  $f_{s^{\pm \mu},b_0(s)}(z)$
where $$b_0(s) = \sum_{j \ge j_0} c_j s^j \in \C((s)).$$
 Hence, $(s^{\pm \mu}, b_0(s))$
  is a formal solution of $f_{s^{\pm \mu},b(s)}^{\circ p}(0) =0$. Every formal
  solution is convergent, for $|s|$ sufficiently small (e.g. see \cite[\S 7.2]{FischerPlaneCurves}).
  Since $\mu$ is minimal, $(s^{\pm \mu}, b_0(s))$ is a parametrization
  of $\cE_x$ for some puncture $x$ of a periodic curve with period dividing~$p$.
\end{proof}

Now we show that the Puiseux series version of the Main Lemma (i.e. Lemma~\ref{main-puiseux-l}) is equivalent to the Main Lemma (i.e. Lemma~\ref{l-main}).
Given a puncture $x$ with local parametrization $(s^{\pm \mu}, b_x(s))$,
any meromorphic function $w:\cE_x \to \oC$ has a Laurent
series expansion in the local parameter $s$. That is,
$w$ is an element of $\C((s))$.
Under the morphism determined by
$s \mapsto t^{1/\mu}$ between $\C((s))$ and $\C((t^{1/\mu}))$,
the order at $0$ is divided by $\mu$.
This simple observation leads to the equivalence between the
two versions of the Main Lemma:

\begin{lemma}
  \label{l:puiseux-complex}
  Let $\beta_0$ be a Puiseux series for a puncture $x$.
  Then,  \eqref{eq:main-l} holds if and only if
  \eqref{eq:main-puiseux-l} holds.  
\end{lemma}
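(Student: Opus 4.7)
The plan is to realize both identities as two shadows of a single equation, obtained by pulling the branch $\cE_x$ back to a Puiseux-series parameter. The central tool is the $\C$-algebra isomorphism
\[
\iota\colon \C((s)) \longrightarrow \C((t^{1/\mu_x})) \subset \puiseux, \qquad s \longmapsto t^{1/\mu_x},
\]
whose defining property is $\ord_0 \iota(w) = \tfrac{1}{\mu_x}\,\ord_x w$ for every nonzero $w \in \C((s))$. By definition of a Puiseux series for $\cE_x$, one has $\iota(b_x(s)) = \beta_0$.

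First I would identify the dynamics on $\cE_x$ with the Puiseux family. Extending $\iota$ to rational functions of $z$ by acting trivially on $z$, and using $\iota(s^{\pm\mu_x}) = t^{\pm 1}$, the map $f_{s^{\pm\mu_x},\,b_x(s)}(z)$ is sent to
\[
1 + \frac{\beta_0}{z} + \frac{t^{\pm 1}}{z^2}.
\]
By Lemma~\ref{l-first-order}, the plus sign (i.e.\ $x\in\cL_+$) forces $\val{\beta_0}\le 1$, while the minus sign ($x\in\cL_-$) forces $\val{\beta_0}>1$; in either case the image is exactly $\varphi_{\beta_0}(z)$, matching the correct branch of the definition of $\varphi_\beta$. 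Consequently $\upomega_p(\beta) = \omega_p(t^{\pm 1}, \beta)$ as a rational function of $\beta$ over $\puiseux$, and differentiating in $\beta$ and then evaluating at $\beta_0$ gives
\[
\dfrac{d\upomega_p}{d\beta}(\beta_0) \;=\; \iota\bigl(\partial_b \omega_p(s^{\pm\mu_x}, b_x(s))\bigr),
\]
with the analogous identity relating $(d\varphi^{\circ p-1}_{\beta_0}/dz)(\infty)$ and $(d f^{\circ p-1}_{a,b}/dz)(\infty)$.

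Applying $\ord_0 \iota = \tfrac{1}{\mu_x} \ord_x$ to both sides of the identifications above and multiplying \eqref{eq:main-puiseux-l} through by $\mu_x$ transforms it into
\[
\ord_x \partial_b \omega_p \;=\; \ord_x \dfrac{d f^{\circ p-1}_{a,b}}{dz}(\infty) \;+\; 2\mu_x\,\ord_0 \tau(\beta_0).
\]
When $x \in \cL_+$ the definition of $\tau$ gives $\tau(\beta_0)=1$, so the correction vanishes; when $x \in \cL_-$ one has $\tau(\beta_0)=t$, so the correction equals $2\mu_x$. These are exactly the two cases of \eqref{eq:main-l}, and since every step is reversible, the two formulations are equivalent.

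The only real obstacle is bookkeeping in the case split: one must verify that the identification $a \leftrightarrow t^{\pm 1}$ dovetails with the branch-dependent definition of $\varphi_\beta$, and that the correction factors $2\mu_x$ and $2\,\ord_0\tau$ line up after multiplication by $\mu_x$. Both are immediate once $\iota$ is on the table, so the proof is essentially an unwinding of definitions.
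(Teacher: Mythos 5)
Your proof is correct and takes essentially the same route as the paper: both hinge on the isomorphism $\iota\colon \C((s))\to\C((t^{1/\mu_x}))$, $s\mapsto t^{1/\mu_x}$, which intertwines the two evaluations and scales orders by $1/\mu_x$, so that \eqref{eq:main-l} is literally $\mu_x$ times \eqref{eq:main-puiseux-l}. Your explicit check that $\ord_0\tau(\beta_0)$ equals $0$ or $1$ according to whether $x\in\cL_+$ or $x\in\cL_-$, and that the sign $\pm$ in $f_{s^{\pm\mu_x},b_x(s)}$ matches the branch of the definition of $\varphi_\beta$, is a useful unpacking of what the paper leaves implicit (it relies on Lemma~\ref{l-first-order}), but it is not a different argument.
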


For simplicity, we use partial derivatives notation.
In particular, the derivatives involved
in~\eqref{eq:main-puiseux-l}, will be denoted by $\partial_\beta \upomega_p (\beta_0)$ and $\partial_z \varphi^{\circ p-1}_{\beta_0} (\infty)$.

\begin{proof}
    We drop the subscript from the multiplicity 
    $\mu_x$ and simply write $\mu$.
    Denote by $\iota: \C((s)) \to \C((t^{1/\mu}))$ the isomorphism defined
  by $s \mapsto t^{1/\mu}$.
  Consider a local parametrization $(s^{\pm \mu}, b_0(s))$ at $x$ with
  associated Puiseux series $\beta_0$. 
   Then $\iota (b_0) =\beta_0$.
  Given $z \in \C((s))$ (resp. $\iota(z) \in
  \C((t^{1/\mu}))$) denote by $\ord_0 z$ (resp. $\ord_0 \iota(z)$), the order
  at $s=0$ (resp. $t=0$).
Then,
  obviously, $\ord_0 z = \mu \cdot \ord_0 \iota (z)$.

  Let $\operatorname{ev_s}: \C[a,b] \to \C((s))$ be evaluation
  at $a=s^{\pm \mu}, b =b_0(s)$ and 
  $\operatorname{ev}_t: \C[a,b] \to \puiseux$ be evaluation at
   $a=t^{\pm 1}, b = \beta_0$.
  Then, $\operatorname{ev}_t = \iota \circ \operatorname{ev}_s$.

  Now $\varphi_{\beta_0}(z) \in \puiseux (z)$
  is the image of $f_{a,b}(z) \in \C[a,b](z)$ under the morphism (induced
  by)  $\operatorname{ev}_t$. Therefore,
  $\varphi_{\beta_0}^{\circ n}(z)$ is the image of $f^{\circ n}_{a,b}(z)$,
  under $\operatorname{ev}_t$.
  Also,  $\operatorname{ev}_t$ 
  maps $\partial_b \omega_p$ and 
  $\partial_z f^{\circ p-1}_{a,b}(1/z)$
  onto $(\partial_\beta \upomega_p)(\beta_0)$ and
  $\partial_z \varphi_{\beta_0}^{\circ p-1}(1/z)$, respectively.
Similarly, $\operatorname{ev}_s$ maps $\partial_b \omega_p$ and 
$\partial_z f^{\circ p-1}_{a,b}(1/z)$ onto the Laurent series around $x$,
in the local parameter $s$, of the meromorphic functions $\partial_b \omega_p(a,b)$
and $\partial_z f^{\circ p-1}_{a,b}(1/z)$ for $(a,b) \in \overline{\cS}_p$.
Since $\operatorname{ev}_t = \iota \circ \operatorname{ev}_s$ and $\ord_0 z = \mu \cdot \ord_0 \iota (z)$, it follows that equation
 \eqref{eq:main-l} is equation
  \eqref{eq:main-puiseux-l},
multiplied by $\mu$. 
\end{proof}

\subsection{A complete field}
\label{s:L}
Recall that $\puiseux$
is endowed with the non-Archimedean absolute value
$$ \val{z} := \exp(-\ord_0 z).$$
The metric induced by $\val{z}$ in
$\puiseux$ is particularly
suited to understand the orders of dynamically defined quantities
around the punctures of $\cS_p$.
However, endowed with this metric, $\puiseux$ is not complete.
We let $\L$ be the field obtained as the completion of $\puiseux$. It follows that $\L$ is also algebraically closed (e.g. see~\cite{LibroCassels}). 
Moreover, each element $z$ of $\L$
can be represented by a series of the form:
$$z = \sum_{n \geq 0} c_n t^{\lambda_n},$$
where $\lambda_n \in \Q$ and $\lambda_n \to +\infty$ as $n \to +\infty$.
Both, $\ord_0 z$  and  $\val{z}$
naturally extend to $\L$.
The value group $\lval \L^\times \rval$ of $\L$ is $\exp(\Q)$.

We regard rational maps $\varphi \in \L(z)$ as dynamical
systems acting on the projective line $\ponel$ over $\L$, and sometimes on the Berkovich projective line $\poneberk$ over $\L$.
For our purpose here, passing from $\puiseux$ to $\L$ is not essential.
However, non-Archimedean dynamics is better understood with the aid of Berkovich
spaces over \emph{complete} fields. For background about non-Archimedean dynamics see~\cite{BakerRumelyBook,BenedettoBook}.
In fact, we intensively employ the full description for quadratic rational dynamics over $\L$ given by the author in~\cite{KiwiPuiseuxQuadratic}.
Although we summarize the main facts about quadratic dynamics in
\S \ref{s:q-dynamics}, for a detailed exposition we refer the reader to~\cite{KiwiPuiseuxQuadratic}.

In \S~\ref{s:spherical}-\S~\ref{s:action-balls}, we summarize basic
facts about non-Archimedean analysis that will be used frequently in the
rest of the paper.

\subsubsection{Spherical metric}
\label{s:spherical}
The projective line $\ponel$ is identified with
$\L \cup \{ \infty \}$ via $[z:1]\mapsto z$ and
$[1:0] \mapsto \infty$. In $\ponel$ we work
with the \emph{spherical metric} which, for $z, w \in \L$, is given by
$$\dist_s (z,w) :=
\begin{cases}
  \val{z-w} & \text{ if } \val{z},\val{w} \le 1,\\
  \val{1/z - 1/w} & \text{ if } \val{z},\val{w} > 1,\\
  1 & \text{ otherwise}.
\end{cases}
$$
It naturally extends to $z=\infty$ or $w=\infty$.

In dynamical space, we will compute the size of derivatives in the \emph{spherical metric}
and denote them by $\| \cdot \|$. For example,
$\|\partial_z \varphi_\beta (\infty)\|=\val{\beta}$, for all $\beta \neq 0$.
In parameter space, we work in the standard coordinate of $\L$.
For example, 
$\| \partial_\beta \varphi_\beta (z) \|$ is
$\val{\partial_\beta (1/\varphi_\beta(z))}$ if $\val{\varphi_\beta(z)}>1$.
In this language, the Main Lemma states that for $n=p$:
\begin{equation}
  \label{eq:nidentity}
\|\partial_\beta \upomega_n(\beta_0)\| = \|\partial_z \varphi_{\beta_0}^{\circ n-1}(\infty)\| \val{\tau}^2.  
\end{equation}
Our strategy will be to prove \eqref{eq:nidentity}
for an increasing sequence of values of $n$, finishing with $n=p$.

\subsubsection{Reduction: from Puiseux series to $\C$}
Often we will pass from dynamics over $\L$ to dynamics over $\C$ via \emph{reduction}. Intuitively, reduction 
amounts to replace $t$ by $0$.
More precisely, 
the set of elements $z= \tilde{z}+o(1) \in \L$ is \emph{the ring of integers}
$$\mathfrak{O} := \{ z \in \L : \val{z}\le 1 \}.$$
The elements of its
unique maximal ideal $$\mathfrak{M}:= \{ z \in \L : \val{z}<1 \}$$
are of the form $z=o(1)$.
The quotient $\mathfrak{O}/\mathfrak{M}$
is canonically identified with $\C$
via
$$\tilde{z} + \mathfrak{M} \mapsto \tilde{z},$$
for $\tilde{z} \in\C \subset \L$.
The quotient map naturally extends to:
$$\rd : \ponel  \to \ponec,$$
by declaring $\rd(z) =\infty$ when $\val{z} > 1$.
This map is called the \emph{reduction map}.
Note that each spherical open ball of radius $1$ maps onto a point under
reduction. The obvious inclusion $\ponec \hookrightarrow \ponel$ is a right
inverse of $\rd$. 

Although we mostly work with non-homogeneous coordinates, to understand the effect of reduction on rational maps is convenient to work in homogeneous coordinates.
Any rational map $\varphi: \ponel \to \ponec$
can be written as a quotient of degree $d$ homogeneous polynomials $P, Q$
with coefficients in $\mathfrak{O}$
and with at least one coefficient of absolute value
$1$. The polynomials $\tilde{P}, \tilde{Q}$ obtained after
reducing the coefficients of $P$ and $Q$ might have a common factor, say $H$.
Then there exist relatively prime polynomials $\widehat{P},\widehat{Q}$ with complex coefficients and  a maximal degree common divisor $H$ of $\tilde{P}, \tilde{Q}$ 
such that $\tilde{P} = H \cdot \widehat{P}$ and $\tilde{Q} = H \cdot \widehat{Q}$.
We say that:
$$
\begin{array}{rccl}
  \rd \varphi:&\ponec&\to&\ponec\\
     & [z:w]&\mapsto&[\widehat{P}(z,w),\widehat{Q}(z,w)]
\end{array}
$$
is the \emph{reduction of $\varphi$}.
When convenient we denote $\rd(z)$ by $\tilde{z}$ and
 $\rd \varphi$
by $\tvarphi$.

The reduction map semiconjugates
$\varphi: \pone_\L \to \pone_\L$ with
$\rd \varphi: \pone_\C \to \pone_\C$
for all $z \in \pone_\L$ outside
a finite collection disjoint open balls of spherical radius $1$.
In fact, 
$$(\rd \varphi)(\tilde{z}) = \rd(\varphi(z))$$
for all $z \in \ponel$ such that $H(\tilde{z}) \neq 0$.
Moreover, if $\deg \tvarphi \ge 1$, then $\varphi$ maps the ball $\rd^{-1} (\trz)$ onto the ball
$\rd^{-1}(\tvarphi(\trz))$
with degree $\deg_{\trz} \tvarphi$. 

\subsubsection{Action on Balls and Annuli}
\label{s:action-balls}
A ball $B$ in $\L$ is a set of the form
$$\{ z \in \L : \val{z-z_0} < r \} \text{ or } \{ z \in \L : \val{z-z_0} \le r \},$$
for some $z_0 \in \L$ and some $r >0$. In the first case we say that
$B$ is an \emph{open ball}, in the latter, it is a \emph{closed  ball}.
All balls are topologically open and closed. If $r \notin |\L^\times|_0 = \exp(\Q)$, then the open and closed ball of radius $r$ around $z_0$
coincide. If $r \in |\L^\times|_0$, then we say that the corresponding balls are \emph{rational open} or
\emph{rational closed} balls.

An analytic function $\varphi: \mathfrak{O} \to \mathfrak{O}$ is 
power series $\sum c_i z^i$ 
with coefficients in $\mathfrak{O}$ such that $\val{c_i} \to 0$; equivalently,
the series converges for all $z \in \mathfrak{O}$.
The \emph{reduction} $\tilde{\varphi} = \sum
\tilde{c}_i \tilde{z}^i \in \C[\tilde{z}]$ is a polynomial.
Moreover, $\deg \tvarphi \ge 1$ if and only if $\varphi$ is onto.
In this case, $\deg \varphi = \deg \tvarphi$ is the number of preimages under $\varphi$, counting multiplicities, of points in $\mathfrak{O}$.
Moreover, $\varphi$ maps the maximal open ball $\tilde{z} + \mathfrak{M}$
onto $\tvarphi(\tilde{z}) + \mathfrak{M}$
with degree $\deg_{\tilde{z}} \tvarphi$. Note that $\deg \varphi$ is the number
of critical points of $\varphi$, counting multiplicities, plus $1$. This is
true since the residue field $\mathfrak{O}/\mathfrak{M}=\C$ has characteristic
zero.

We identify $\ponel$ with $\L \cup \{\infty\}$.
If $\varphi: \ponel \to \ponel$ has no poles in a ball $B$, say of radius $r$,
then
$\varphi(B)$ is a ball of the same type (rational, open, closed), say of radius $r'$.
Moreover, the Schwarz Lemma holds; that is,  for all $z \in B$:
$$\val{\dfrac{d\varphi}{dz}(z)}\le \dfrac{r'}{r},$$ with equality if and only if $\varphi: B \to \varphi(B)$ is a bijection; equivalently, $$\val{\varphi(z)-\varphi(z')}= \dfrac{r'}{r} \val{z-z'}$$ for all $z,z' \in B$.
Furthermore, since the residue field has characteristic zero, the Mean Value Theorem also holds: if $\varphi:B \to \L$
and $\val{\varphi'(z)} \le C$ for all $z \in B$, then for all $z,z' \in B$,
$$\val{\varphi(z)-\varphi(z')} \le C \val{z-z'}.$$
In general,
a \emph{(resp. rational, open) closed ball} $B$ in $\ponel$ containing $\infty$ is the
complement of a (resp. rational, closed) open ball in $\L$. 
Given a rational map $\varphi: \ponel \to \ponel$ and a ball $B$, then
$\varphi(B)$ is a ball of the same type or $\varphi(B)=\ponel$. 
If $B$ is a ball and
$\varphi(B)$ is a ball, then $\varphi: B \to \varphi(B)$ has a well
defined degree which is the number of critical points in $B$, counting multiplicities, plus $1$. If moreover
$B$ is a closed rational ball, then each maximal open ball in $B$ maps onto a maximal open ball in $\varphi(B)$.

If $\L \supset B \supset B'$ are nested balls, 
then $A=B\setminus B'$ is an \emph{annuli}. Its \emph{modulus}
is $\operatorname{mod}(A):=\log \diam(B) - \log \diam (B')$. These definitions easily generalize to
$\ponel$. If $A$ is an annulus and $\varphi: \ponel \to \ponel$ is a rational map
such that $\varphi(A)$ is again an annulus, then $\varphi: A \to \varphi(A)$ has
a well defined degree $d_A\ge 1$ and $d_A \cdot \operatorname{mod}(A) = \operatorname{mod}(\varphi(A))$.

% In the Berkovich projective line $\poneberk$,
% the \emph{Gauss point} $x_g$ is $\partial \overline{\mathfrak{O}}$.
% It is a branch point for the tree structure of $\poneberk$.

% of radius $1$ 
% Type II points in $\poneberk$ are the orbit of $x_g$ under
% $\operatorname{PSL}(2,\L)$. 

\subsection{Quadratic rational dynamics over $\L$: limbs}
\label{s:q-dynamics}
In our family $\varphi_\beta$ we restrict our attention to certain
subsets of parameter space that we call 
\emph{limbs}, by analogy with complex dynamics. More precisely, 
given $\theta \in \Q/\Z$  of order $q \ge 2$,
we define the non-Archimedean limb $\cL_\theta \subset \L$ by:
\begin{eqnarray*}
  \cL_{1/2} &:=& \{ \beta = \gamma t^{-1} : \val{\gamma}= 1 \},\\
  \cL_\theta&:=& \{ \beta = c_\theta + \gamma t : \val{\gamma}\le 1 \}, \quad \text{ if } q \ge 3,
\end{eqnarray*}
where $c_\theta := -(2 \cos \pi \theta)^{-2}.$

By Lemmas~\ref{l-first-order} and \ref{l:periodic-correspond}, limbs contain
the parameters of our interest. Indeed, any $\beta_0$ for which $\upomega$ is periodic of period $p \ge 3$ is contained
in some limb.
In this section, we introduce basic results
about the dynamics of $\varphi_\beta$
with focus on parameters that lie in a limb.

% We refer the reader to~\cite{BenedettoBook} for general background in non-Archimedean dynamics and to~\cite{KiwiPuiseuxQuadratic} for a detailed description of quadratic rational dynamicas over $\L$.

Below, we show that all
maps $\varphi_\beta$ in a limb have an \emph{invariant Rivera domain $U_0$}. 
Although Rivera domains are easier to understand in 
Berkovich spacee, we will keep our discussion in the classical
line $\pone_\L$. In this context, an invariant Rivera domain $U_0$ is
the complement of a finitely many 
pairwise disjoint rational closed
balls such that $\varphi: U_0 \to U_0$ is a bijection and $U_0$ is maximal
with respect to these properties.

To describe the dynamics and the invariant domain $U_0$ of $\varphi_\beta$ 
when $\beta$ lies
in an order $q$ limb $\cL_\theta$,  we
define $q$ closed balls
$B_0, \dots, B_{q-1}$, subscripts mod $q$,  and a maximal open ball $D_0$ contained in $B_0$. We do not
make the dependence of $B_j$ on $\theta$ explicit.
Only $D_0$ when $q=2$ also depends on $\beta$.

Given $\beta=\gamma t^{-1} \in \cL_{1/2}$, let
\begin{eqnarray*}
  B_0 &:=& \{ z:\val{z} \le 1 \},\\
  B_1 &:=& \{ z:\val{z}  \ge \val{t}^{-1}\}, \\
  D_0 &:=& \{ z:\val{z-\gamma^{-1}}<1 \}.
\end{eqnarray*}

Given $\theta \in \QS$ of order $q \ge 3$,
consider $\beta = c_\theta + o(1) \in \cL_\theta$.
Recall that $\upomega_j(c_\theta)$ denotes
the $j$-th iterate of $\upomega=0$ under $\varphi_{c_\theta}$.
Let
\begin{eqnarray*}
  B_j &:=& \{ z: \val{z - \upomega_j(c_\theta)} \le \val{t} \}, \quad \text{ if } j=0 \text{ or } 2 \le j < q\\
  B_1 &:=& \{ z:\val{z}  \ge \val{t}^{-1}\}, \\
  D_0 &:=& \{ z: \val{z+c_{\theta}^{-1}t} < \val{t} \}.
\end{eqnarray*}
Note that the critical points of $\varphi_\beta$
lie in $B_0$ and the critical values in $B_1$.

\begin{proposition}
\label{p:level0}
  Consider a parameter $\beta$ in a limb $\cL_\theta$ of order $q$.
Let $$U_0 := \pone_\L \setminus \bigcup_{j=0}^{q-1}  B_j.$$
  Then:
  \begin{enumerate}
  \item $\varphi_\beta : U_0 \to U_0$ is a bijection.
  \item If $j\neq 0$, then $\varphi_\beta : B_j \to B_{j+1}$ is a bijection.
  \item $\varphi_\beta : B_0 \setminus D_0 \to B_1$ has degree $2$.
  \item $\varphi_\beta : D_0 \to \pone_\L \setminus B_1$ is a bijection.
  \end{enumerate}
\end{proposition}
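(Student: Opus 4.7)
The plan is to verify each of the four claims by direct non-Archimedean computation, treating the two types of limb ($q=2$ and $q\ge 3$) separately. In either case, the critical points of $\varphi_\beta$ are $\upomega=0$ (at which $\varphi_\beta$ ramifies over $\infty$) and $\upomega'$ as computed from the explicit formula; an immediate check shows both lie in $B_0$.

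For $q \ge 3$, I first reduce $\varphi_\beta$ modulo $t$ to obtain the M\"obius map $\tilde\varphi(z)=1+c_\theta/z$, whose orbit of $0$ is $q$-periodic and cycles through the points $\upomega_j(c_\theta)$ for $j=0,\ldots,q-1$. For each $j\not\equiv 0 \pmod{q}$, I Taylor-expand $\varphi_\beta$ around $\upomega_j(c_\theta)$ (or around $\infty$ in the chart $w=1/z$ for $j=1$): the derivative has constant absolute value $1$ on $B_j$, and the image lies in a closed ball of the same radius as $B_{j+1}$ centered at $\upomega_{j+1}(c_\theta)$. The Schwarz lemma then yields claim (2). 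For claim (4), I evaluate $\varphi_\beta$ and $\varphi_\beta'$ at the center $-c_\theta^{-1}t$ of $D_0$: the derivative has constant absolute value $\val{t}^{-2}$, so $\varphi_\beta(D_0)$ is an open ball of radius $\val{t}\cdot\val{t}^{-2}=\val{t}^{-1}$, which necessarily coincides with $\ponel\setminus B_1$.

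The key step for (3) is to verify that the second critical point $\upomega'=-2t/\beta$ is \emph{not} in $D_0$. Writing $\beta=c_\theta+\gamma t+o(t)$, a Taylor expansion gives $\val{\upomega'-(-c_\theta^{-1}t)}=\val{t}$, equality rather than strict inequality. Hence both critical points lie in $B_0\setminus D_0$, and since the total degree of $\varphi_\beta$ is $2$ while (4) accounts for degree $1$ on $D_0$, the restriction $\varphi_\beta:B_0\setminus D_0\to B_1$ has degree $2$, proving (3). Claim (1) follows by preimage counting: the preimage of $\bigcup_j B_j$ consists of the $B_j$'s themselves (via (2)--(3)) together with a finite collection of subdisks of $D_0$ corresponding under $\varphi_\beta|_{D_0}$ to the $B_j$ with $j\ne 1$; what remains of $D_0$ maps bijectively onto $U_0$, and a degree count forces $\varphi_\beta:U_0\to U_0$ to be a bijection as well.

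The case $q=2$ is entirely analogous but with $B_0=\mathfrak{O}$ and differently sized balls. Writing $\varphi_\beta(z)=(tz^2+\gamma z+1)/(tz^2)$ and examining residue disks of $B_0$ one at a time, the image lands on the boundary $\{\val{w}=\val{t}^{-1}\}$ of $B_1$ on every residue disk except the distinguished one inside $D_0$; on $D_0$ the analogous derivative computation yields a bijection onto $\ponel\setminus B_1$, and the remaining verifications carry over. The main obstacle throughout is the marginal estimate that places $\upomega'$ exactly on the topological boundary of $D_0$ rather than inside it, which is essential for counting both critical points in $B_0\setminus D_0$; this relies crucially on $D_0$ being defined by a strict inequality.
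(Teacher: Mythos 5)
Your proposal follows essentially the same route as the paper: exploit the reduction semiconjugacy, do explicit Taylor expansions and derivative estimates, then apply the non-Archimedean Schwarz Lemma and a degree count. The verification that $\upomega'$ lies at spherical distance exactly $\val{t}$ from the centre of $D_0$ (hence in $B_0\setminus D_0$, not in $D_0$) is correct and is indeed a point one must not miss, though calling this the ``topological boundary'' is a slight abuse in the ultrametric setting where every ball is clopen.

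The one place where your argument is genuinely looser than the paper's is claim (3). Saying that ``the total degree is $2$ and $D_0$ accounts for degree $1$, hence $B_0\setminus D_0 \to B_1$ has degree $2$'' is not yet a proof: to subtract degrees in this way you must first show that every preimage of a point of $B_1$ lies in $B_0$, i.e.\ that no preimages of $B_1$ sit in $U_0$ or in $B_j$ for $j\neq 0$. (This is true, and can be extracted from the reduction $\rd\varphi_\beta(\trz)=1+c_\theta/\trz$ being a M\"obius map that sends $\infty\mapsto 1$, so all preimages of $B_1$ must lie in the degenerate ball $\mathfrak{M}$ and then in $B_0$ by a direct estimate—but you do not supply this step.) The paper avoids the issue entirely by passing to the rescaled reduction $h(\trz)=\rd\bigl(t\,\varphi_\beta(tz)\bigr)=(c_\theta\trz+1)/\trz^2$, which is a degree-two complex rational map whose local degrees at each $\trz\neq -c_\theta^{-1}$ give both the image residue disks in $B_1$ and the degree in one stroke; (4) then falls out of the same formula at $\trz=-c_\theta^{-1}$. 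You should either import this rescaled reduction or explicitly confine the preimages of $B_1$ to $B_0$ before running the degree count.

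A second, more minor, gap: for claim (2) you assert that $\varphi_\beta(B_j)$ is a ball of radius $\val t$ ``centered at $\upomega_{j+1}(c_\theta)$,'' but a priori the centre is $\varphi_\beta(\upomega_j(c_\theta))$, and one needs $\varphi_\beta(\upomega_j(c_\theta))=\upomega_{j+1}(c_\theta)+O(t)$. This is an easy pointwise estimate (since $\varphi_\beta-\varphi_{c_\theta}=(\beta-c_\theta)/z=O(t)$ on points of absolute value $1$), but it should be stated; the paper does the equivalent bookkeeping via the inductive claim $\upomega_j(\beta)=\upomega_j(c_\theta)+O(t)$ for $2\le j\le q$. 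With these two points made explicit, your argument matches the paper's.
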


\begin{proof}
  We assume that $q \ge 3$. The case $q=2$ follows from a similar argument.

  Consider a parameter $\beta \in \cL_\theta$. To simplify notation
  write $c:=c_\theta$. Hence $\varphi_c$ is the map corresponding to $\beta =c$.
  Also, $$M(\trz):=1 + \dfrac{c}{\trz} =\rd \varphi_\beta (\trz)  $$
  is an order $q$ M\"oebius transformation.

  The semiconjugacy between $\varphi_\beta$ and $M$, furnished by reduction, holds
  for all $z \notin \mathfrak{M}$. Hence, the open ball $\trz + \mathfrak{M}$
  of radius $1$ is mapped isometrically (in the spherical metric)
  onto $M(\trz) + \mathfrak{M}$, for
  all $\trz \in \oC \setminus \{0\}$. In particular,
  $\upomega_j(\beta) \notin \mathfrak{M}$, for $1\le j< q$.
  Furthermore, a ball of spherical
  radius $\val{t}$ around $\upomega_j (\beta)$ maps onto
  a ball with the same spherical radius around  $\upomega_{j+1} (\beta)$,
  for all $1 \le j < q$. 

   We postpone the proof of (1).  To prove (2), let $\gamma \in \mathfrak{O}$ be such that
   for  $\beta = c + \gamma t \in \cL_\theta$. We claim
  that, for all $2\le j \le q$,
\begin{eqnarray}
  \label{eq:perball}
  \upomega_j (\beta) &=& \upomega_j(c) + O(t).
  \end{eqnarray}
  Indeed, this is trivially true for $j=2$ and we proceed by induction. Suppose true for some $j$ such that $2 \le j< q$. Then
  $$\val{\varphi_\beta(\upomega_j(\beta))-\varphi_\beta(\upomega_j(c))}
  = \val{\upomega_j(\beta) - \upomega_j(c)} \le \val{t},$$
  since $\varphi_\beta$ is an isometry  on open balls of radius $1$ disjoint from $\mathfrak{M}$.
  Moreover,
  $$\val{\varphi_\beta(\upomega_j(c))-\varphi_c(\upomega_j(c))}=
  \val{\dfrac{t \gamma}{\upomega_j(c)}} \le \val{t}.$$
  From the strong triangle inequality, \eqref{eq:perball} holds for $j+1$.
  Therefore, (2)  holds for $1 \le j \le q-2$.
  To prove (2) for $j=q-1$, we must show that $\upomega_q(c)=\upomega+O(t)$.
 In fact, a straightforward induction shows that for $2 \le j \le q$:
  \begin{eqnarray*}
      \upomega_j (c) &=& M^{\circ j-2} (1) + O(t).
  \end{eqnarray*}
Since $ M^{\circ q-2} (1) =0=\upomega$, assertion (2) holds.
  
  For (3) and (4), note that
  $$h(\trz):= \rd \left( {t} {\varphi_\beta (zt)} \right) = \dfrac{c\trz+1}{\trz^2}.$$
  It follows that  $D_0 = \{z: z= -c^{-1} t + o(t)\}$ maps bijectively onto
  $\mathfrak{M}$ by $t \varphi_\beta$, which proves (4).
  For all $\trz \neq -c^{-1}$, an open balls of the form $\trz t + o(t)$ is
  mapped by $\varphi_\beta$ map onto the open ball
  of spherical radius $\val{t}$ centered at $h(\trz)/t$ with
  degree $\deg_{\trz} h$, which yields (3).
  
  To finish, we deduce (1). First observe
  that every element of $\pone_\L \setminus U_0$ has
  two preimages, counting multiplicities, in $\pone_\L \setminus U_0$.
  Thus, $\varphi_\beta (U_0) \subset U_0$. By (4), a point $z \in U_0$ has  exactly one preimage 
  in $D_0$. By (2) and (3), no preimage of $z$ is contained in $B_1, \dots, B_{q-1},
  B_0\setminus D_0$. Hence, $z$ has exactly one preimage in $U_0$. That is, $\varphi_\beta (U_0) = U_0$ and
  $\varphi_\beta$ is injective in $U_0$.
\end{proof}

We collect basic consequences about the action of $\varphi_\beta$ on balls
and annuli:

\begin{corollary}
  \label{c:action-balls}
  Consider $\beta$ in an order $q$ limb. Then the following holds:
  \begin{enumerate}
  \item If $B$ is a ball properly contained in $B_j \setminus D_0$, then
    $\varphi_\beta(B)$ is a ball contained in $B_{j+1}$. Moreover,
    $\varphi_\beta: B \to \varphi_\beta(B)$ has degree $1$ if $B$ is critical point free and degree $2$, otherwise. Furthermore, every maximal open ball
    contained in $B_j$ distinct from $D_0$ maps onto a maximal open ball
    contained in $B_{j+1}$.
  \item For $2 \le j <q$,
    $$C_j := \left(\varphi_\beta|_{D_0}\right)^{-1} (B_{j+1})$$
    is a ball which maps bijectively onto $B_{j+1}$.
  \item
    If $A=B \setminus B'$ is an annulus such that $B$ is properly contained in $B_j \setminus D_0$ or in $C_j$, for some $j$, then $\varphi(A)$ is an annulus. Moreover,
    $\varphi: A \to \varphi(A)$ has degree $2$ if 
    is  $B'$ contains a critical point, otherwise it has degree $1$.
  \item $U_0$ is an invariant Rivera domain.
  \end{enumerate}
\end{corollary}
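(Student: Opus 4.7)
The plan is to derive each item of Corollary~\ref{c:action-balls} by upgrading Proposition~\ref{p:level0}, which describes the action of $\varphi_\beta$ on the coarse pieces $B_j$ and $D_0$, to an action on sub-balls and annuli via the Schwarz Lemma and reduction machinery recalled in \S\ref{s:action-balls}. The essential geometric input is that both critical points of $\varphi_\beta$, namely $\upomega=0$ and $\upomega' = -2t/\beta$ (or $-2/(t\beta)$), lie in $B_0 \setminus D_0$, and moreover in two different maximal open sub-balls of $B_0$, each distinct from $D_0$. A short computation using $\val{c_\theta}=1$ for $q \ge 3$, and the identity $-2/\gamma - \gamma^{-1} = -3/\gamma$ for $q=2$, confirms this.

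For item (1), a ball $B$ properly contained in $B_j \setminus D_0$ must lie in a maximal open sub-ball of $B_j$ different from $D_0$, since a ball meeting $D_0$ must contain its center. For $j \ne 0$, this sub-ball is free of critical points and $\varphi_\beta|_{B_j}$ is an analytic bijection onto $B_{j+1}$, so $\varphi_\beta(B)$ is a ball in $B_{j+1}$ of degree $1$. For $j = 0$, I would rescale $B_0$ and its image to $\mathfrak{O}$ (via $z \mapsto z/t$ for $q \ge 3$, as already done in the proof of Proposition~\ref{p:level0}, and via an analogous rescaling of both source and target for $q=2$) and read off from the nonconstant degree-$2$ reduced rational map the location of the critical points: exactly the reductions of $0$ and $\upomega'$. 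The degree dichotomy then follows from the formula \emph{degree} $=$ \emph{number of critical points} $+\,1$ applied to $B$, and the maximal-open-sub-ball statement is a direct consequence of the reduction being nonconstant. Item (2) is immediate from Proposition~\ref{p:level0}(4): since $\varphi_\beta|_{D_0}$ is an analytic bijection onto $\ponel \setminus B_1$ and each $B_{j+1}$ with $2 \le j < q$ is a closed rational ball inside $\ponel \setminus B_1$, its preimage $C_j$ is a closed rational ball, mapped bijectively onto $B_{j+1}$.

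Item (3) is a degree count on annuli. By items (1) and (2), both $\varphi_\beta(B)$ and $\varphi_\beta(B')$ are nested balls in the appropriate $B_{j+1}$, so $\varphi_\beta(A) = \varphi_\beta(B) \setminus \varphi_\beta(B')$ is an annulus, and the degree of $\varphi_\beta\colon A \to \varphi_\beta(A)$ equals that of $\varphi_\beta\colon B \to \varphi_\beta(B)$ once one verifies $A$ contains no critical points. Since the two critical points lie in distinct maximal open sub-balls of $B_0$ and $B$ lies in a single one, $B$ contains at most one critical point; if $B'$ contains that critical point, so does $B$ and $A$ is critical-point-free, yielding degree $2$; otherwise $B$ contains no critical point and the degree is $1$. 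Item (4) is essentially Proposition~\ref{p:level0}(1) together with maximality: no strictly larger complement of finitely many rational closed balls can be forward invariant, because each $B_j$ maps bijectively to a distinct $B_{j+1}$ and $D_0$ surjects onto $\ponel \setminus B_1$, so adjoining any of these sets to $U_0$ would destroy bijectivity.

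The main obstacle is the $j = 0$ case of item (1), where one must carry out the reduction in the correct rescaled coordinates and identify its critical points with the reductions of $0$ and $\upomega'$. For $q \ge 3$ this is the computation already done in the proof of Proposition~\ref{p:level0} via $h(\trz) = (c_\theta \trz + 1)/\trz^2$; for $q=2$ one rescales source by $z$ itself and target by $1/(t\varphi_\beta)$, obtaining the reduction $\trz^2/(\gamma \trz + 1)$ whose critical points $\trz = 0$ and $\trz = -2/\gamma$ are precisely the reductions of $0$ and $\upomega' = -2/\gamma$. Once these reductions are in hand, the remaining items of the corollary follow as formal consequences of the degree formula of \S\ref{s:action-balls}.
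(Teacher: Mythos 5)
Your argument is essentially the intended one: the corollary is an application of the ball/annulus facts of \S\ref{s:action-balls} together with Proposition~\ref{p:level0}, and the decisive extra input is exactly what you supply, namely that $\upomega$, $\upomega'$, and the center of $D_0$ lie in three \emph{distinct} maximal open sub-balls of $B_0$. For $q\ge 3$ this follows from $\val{\upomega-\upomega'}=\val{\upomega+c_\theta^{-1}t}=\val{\upomega'+c_\theta^{-1}t}=\val{t}=\diam B_0$; for $q=2$ from the analogous computation with all distances equal to $1$. Your rescaled reductions for $j=0$ (the map $h(\tilde z)=(c_\theta\tilde z+1)/\tilde z^2$ when $q\ge 3$ and $\tilde z^2/(\tilde\gamma\tilde z+1)$ when $q=2$) correctly identify the two critical maximal open balls. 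Items (1), (2), and (4) are fine.

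There is, however, a gap in your proof of item (3). You argue via the dichotomy ``either $B'$ contains the critical point (degree~$2$, $A$ critical-point-free), or $B$ contains no critical point (degree~$1$),'' but this misses the third case in which $B$ contains a critical point while $B'$ does not, so that the critical point lies in $A=B\setminus B'$. In that case $\varphi_\beta(A)$ need \emph{not} be an annulus: if $\upomega\in B$ with $\varphi_\beta|_B$ of degree $2$ and $B'$ avoids $\upomega$, the ``conjugate'' ball $B''\subset A$ with $\varphi_\beta(B'')=\varphi_\beta(B')$ forces $\varphi_\beta(A)=\varphi_\beta(B)$, a ball. Admittedly the corollary as printed carries the same implicit restriction, and every invocation of item (3) in the paper (most importantly in the proof of Lemma~\ref{l:smallest-ball}, where the annuli run along $]B_0,X_0]$ and the inner ball always contains $\upomega$) only uses the case where the critical point sits in $B'$. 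Your write-up should either add this as a hypothesis to (3) or note explicitly that the omitted case is never needed; as written, the claim ``otherwise $B$ contains no critical point'' is not a consequence of the hypotheses.
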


We omit the proof of the corollary which is an application of \S \ref{s:action-balls} and Proposition~\ref{p:level0}. To prove (4) it is not difficult
to check that $U_0$ is maximal
with respect to the property of being a complement of finitely many balls
such that $\varphi_\beta:U_0\to U_0$ is bijective.

% In Berkovich space, $\partial \overline{U_0} \subset \poneberk$ is period $q$
% repelling orbit $\cO$ of $\varphi_\beta$, the tangent action $T_\cO \varphi_\beta$
% of the return map is
% a complex quadratic rational map with a parabolic fixed point.
% The tangent space at $\partial \overline{\cL_\theta} \subset \aonean$
% is an affine line over $\C$. The map ....
\subsection{Parabolic rescaling}

The balls $B_0, \dots, B_{q-1}$ correspond to a ``parabolic rescaling limit''.
That is, according to ~\cite[Theorems~1, 2]{KiwiPuiseuxQuadratic},  after
a change of coordinates mapping $B_0$ onto the ring of integers $\mathfrak{O}$,
the map $\varphi^{\circ q}_\beta$
reduces to a complex quadratic rational map with a parabolic fixed  point.
We will need to be precise about the
dependence of the complex parabolic map on the parameter $\beta$.
Although one could implement
a non-Archimedean version of
the computations leading to Proposition~\ref{p:stimson}, for simplicity,
we prefer
to employ this  complex dynamics result.

To write down the dependence on $\beta$,
we introduce parametrizations for the limbs
and convenient changes coordinates. 
Given  $\theta \in \Q/\Z$ of order $q \ge 3$,
we let
\begin{eqnarray*}
  \lambda_\theta(\gamma) & := & c_\theta + \gamma t, \,\text{ for } |\gamma|\le 1,\\
  L_\theta (z) &:=& t^{-1} c_\theta z.
\end{eqnarray*}
Also,
\begin{eqnarray*}
%c_{1/2} & := & 0,\\
  \lambda_{1/2}(\gamma) & := &  \dfrac{\gamma}{t}, \text { for } |\gamma|=1,\\
  L_{1/2} (z)&:=& \gamma z.
\end{eqnarray*}
We abuse of notation and consider the dependence of $L_{1/2}$ on $\gamma$ implicitly. Observe that $\lambda_\theta$ is the natural parametrization of the $\theta$-limb $\cL_\theta$.

Consider
$$
\begin{array}{cccc}
  \pi_0:&B_0& \to& \ponec\\
  & z & \to & \rd(L_\theta(z)).
\end{array}
$$
Then, given $\beta \in \cL_\theta$, the map $\varphi_\beta^{\circ q} : B_0 \setminus D_0 \to B_0$ is semiconjugate, via $\pi_0$, to a complex quadratic rational map $R$ with a parabolic fixed point~\cite[Theorem~1,2]{KiwiPuiseuxQuadratic}.
Moreover, the parabolic fixed point of $R$ is at $\pi_0(U_0)=\infty$ with preimage $\pi_0(D_0)=-1$ and $R$ has a critical point in $\pi_0 (\upomega) =0$.
Hence, $R$ is an element $R_v$ of the parabolic family \eqref{eq:cfamilies} and
$$\pi_0 \circ \varphi_\beta^{\circ q} (z) = R_v \circ \pi_0(z),$$
for $z \in B_0 \setminus D_0$.
Equivalently, $$R_v = \rd(L_\theta \circ \varphi^{\circ q}_{\beta}
  \circ L^{-1}_\theta ),$$ 
where $$v := \rd (L_\theta (\upomega_q(\beta)).$$
The following lemma says that the assignment $\beta \mapsto v$ induces an isomorphism of complex affine lines:

\begin{lemma}
  \label{l:parabolic-family}
  Given $\theta \in \Q/\Z$ of order $q \ge 3$ (resp. $q=2$),
  there exists a non-constant function
  $v_\theta(\tilde{\gamma}) = A \tilde{\gamma} +B \in \C[\tilde{\gamma}]$ (resp. $v_\theta(\tilde{\gamma})=\tgamma$) such that,  for all $\gamma \in \mathfrak{O}$ (resp. $\gamma \in \mathfrak{O} \setminus\mathfrak{M}$),
  $$\rd(L_\theta \circ \varphi^{\circ q}_{\lambda_\theta (\gamma)}
  \circ L^{-1}_\theta ({z})) = R_{v(\tilde{\gamma})}(\tilde{z}) $$
  for all $z \in B_0 \setminus D_0$  or equivalently for all
  $\tilde{z} \in \C\setminus\{-1\}$.
\end{lemma}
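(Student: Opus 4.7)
The plan is to split along the two cases $q \geq 3$ and $q = 2$, reducing the first to the complex-dynamics statement of Proposition~\ref{p:stimson} via the fact that Puiseux reduction is ``evaluation at $t=0$'', and handling the second by explicit calculation.

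For $q \geq 3$, the key observation is that the rescaling $L_\theta(z)=t^{-1}c_\theta z$ is literally the Stimson rescaling $M_t(z)$ from Proposition~\ref{p:stimson}, and that the limb parametrization $\lambda_\theta(\gamma)=c_\theta+\gamma t$ agrees with the Proposition's $\beta_\gamma(t)=c_\theta+\gamma t+o(t)$ taking the $o(t)$ term to be zero. Hence
\[
\Psi(z,\gamma,t)\;:=\;L_\theta\circ\varphi_{\lambda_\theta(\gamma)}^{\circ q}\circ L_\theta^{-1}(z)\;=\;M_t\circ f_{t,c_\theta+\gamma t}^{\circ q}\circ M_t^{-1}(z).
\]
A straightforward induction on the number of iterates (using the explicit form $\varphi_\beta(w)=(w^2+\beta w+t)/w^2$, with $\beta$ linear in $\gamma$) shows that $\Psi(z,\gamma,t)$ is a rational function in $z$ whose numerator and denominator coefficients lie in $\C[\gamma][t,t^{-1}]$. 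I would then normalize numerator and denominator to live in $\C[\gamma][[t]]$ with at least one coefficient of $t$-valuation zero. By Proposition~\ref{p:stimson}, for each fixed complex $\gamma$ the limit as $t\to 0$ equals $R_{v(\gamma)}$ with $v(\gamma)=A\gamma+B$ and $A\neq 0$; since the convergence is uniform on compact subsets of $\C\setminus\{-1\}$ and $\Psi(\cdot,\gamma,t)$ is rational, this means that, after normalization, setting $t=0$ gives $R_{v(\gamma)}$ identically. Because every step of this identity is polynomial in $\gamma$, it persists under substitution of any $\gamma\in\mathfrak{O}\subset\L$, and the reduction map $\mathfrak{O}\to\C$ (being a ring homomorphism) converts the normalized form into $R_{v(\tgamma)}$. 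I then define $v_\theta:=v$.

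For $q=2$, I would compute directly with $\beta=\gamma/t$, $|\gamma|=1$, starting from
\[
\varphi_\beta(z)\;=\;\frac{tz^2+\gamma z+1}{tz^2}.
\]
For $z\in B_0\setminus D_0$, writing $w=\varphi_\beta(z)$ one checks that $1/w=t\,s$ with
\[
s\;:=\;\frac{z^2}{tz^2+\gamma z+1}\;\in\;\mathfrak{O},
\]
and then $\varphi_\beta(w)=1+\gamma s+ts^2$. Conjugating by $L_{1/2}(z)=\gamma z$, substituting $z\mapsto z/\gamma$, and reducing (setting $t=0$, $\gamma\mapsto\tgamma$) yields
\[
\rd\bigl(L_{1/2}\circ\varphi_\beta^{\circ 2}\circ L_{1/2}^{-1}\bigr)(\tilde z)\;=\;\tgamma+\frac{\tilde z^2}{\tilde z+1}\;=\;R_{\tgamma}(\tilde z),
\]
giving $v_{1/2}(\tgamma)=\tgamma$ as asserted.

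The main obstacle is the bookkeeping in the $q\geq 3$ case: one must verify that the reduction of the $\L$-rational map $\Psi(z,\gamma,t)$ equals the complex-dynamical limit of Proposition~\ref{p:stimson}. Concretely, this requires checking that the normalization extracting the correct power of $t$ from numerator and denominator is a single global normalization valid for all $\gamma\in\mathfrak{O}$ simultaneously, so that substitution $\gamma\mapsto\tgamma$ and reduction $t\mapsto 0$ commute. This commutation follows from the polynomiality of $\Psi$ in $\gamma$, from reduction being a ring homomorphism, and from the characteristic-zero residue field preventing unexpected vanishing of denominators; the existence of a clean $t\to 0$ limit is exactly what Proposition~\ref{p:stimson} supplies.
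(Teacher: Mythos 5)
Your argument differs structurally from the paper's. The paper leans on the discussion preceding the lemma, where Theorems~1 and~2 of~\cite{KiwiPuiseuxQuadratic} are invoked to establish, for \emph{every} $\beta$ in the limb, that the reduction of $L_\theta\circ\varphi_\beta^{\circ q}\circ L_\theta^{-1}$ is some $R_v$ with $v=\rd(L_\theta(\upomega_q(\beta)))$ --- this uses the well-definedness $\upomega_q:\cL_\theta\to B_0$ from Proposition~\ref{p:level0}. Once the semiconjugacy is granted, the lemma only needs to show that $\gamma\mapsto v$ descends to an affine function $v_\theta(\tgamma)$, and the paper does this by noting that $w(\gamma)=\rd(L_\theta(\upomega_q(\lambda_\theta(\gamma))))$ depends only on $\tgamma$ as a complex polynomial, then matching it with Proposition~\ref{p:stimson} using the observation that reduction coincides with the limit $t\to 0$. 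Your proposal bypasses the appeal to~\cite{KiwiPuiseuxQuadratic} and attempts to extract the entire semiconjugacy directly from the algebraic structure of $\Psi(z,\gamma,t)$ plus Proposition~\ref{p:stimson}. This is a genuinely different, more self-contained route; the price is a substantially heavier normalization argument.

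The weak point is exactly the one you flag: justifying that a single $t$-normalization of the homogeneous pair $(P,Q)$ works simultaneously for all $\gamma\in\mathfrak{O}$, and that substituting $\gamma\in\mathfrak{O}$ and then reducing agrees with first substituting $\tgamma$ and then setting $t=0$. Your stated reason --- ``the characteristic-zero residue field prevents unexpected vanishing of denominators'' --- does not address the real danger, which is a degeneration of the reduced family at particular values of $\tgamma$: the $t$-constant parts of all coefficients of $P,Q$ could vanish simultaneously at an isolated $\tgamma_0$, or $P_0(z,\tgamma_0)$ and $Q_0(z,\tgamma_0)$ could acquire a common $z$-factor, dropping the reduced degree below two. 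These phenomena can occur for a polynomial family over $\C[\gamma,t]$ regardless of the residue characteristic. What actually rules them out in your setting is that Proposition~\ref{p:stimson} guarantees the $t\to 0$ limit is the honest degree-$2$ map $R_{v(\gamma)}$ for \emph{every} complex $\gamma$, with no exceptional fiber; that non-degeneracy over all of $\C$ is what forces the normalization to be uniform and the reduction to specialize cleanly. If you reorganize the argument so that this non-degeneracy is the explicit engine (rather than an appeal to residue characteristic), the proof closes. Your $q=2$ computation is correct and matches what the paper leaves as an unwritten ``straightforward direct computation.''
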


\begin{proof}
  When $\theta=1/2$, a straightforward direct computation
  proves the assertion.

If $\theta$ has order $q \ge 3$, then
the function $$\upomega_q : \cL_\theta \to B_0$$ is well defined. Therefore,
$$L_\theta (\upomega_q(\lambda_\theta (\gamma))) \in \mathfrak{O},$$
and
$$w(\gamma):=\rd L_\theta (\upomega_q(\lambda_\theta (\gamma)))$$
only depends on $\tgamma$. That is, $w(\gamma) = v_\theta (\tgamma)$ for
some $v_\theta (\tgamma) \in \C[\tgamma]$.
We must show that $\tgamma \to v_\theta(\tgamma)$ is an affine isomorphism.

According to Proposition~\ref{p:stimson}, there exists
a non-constant complex affine map $\tgamma \mapsto A\tgamma + B$
such that if $\lambda_\theta (\gamma)= c_\theta + \tgamma t$, then 
$$A\tgamma + B = \lim_{t\to 0} t^{-1} c_\theta \omega_q(c_\theta+\tgamma t). $$
Since reduction coincides with the limit as $t \to 0$,
$$ A\tgamma + B = \lim_{t\to 0} t^{-1} c_\theta \omega_q(c_\theta+\tgamma t)=\rd (L_\theta (\upomega_q(c_\theta + \tgamma t)) = v_\theta (\tgamma).$$
\end{proof}

\begin{corollary}
  \label{c:minimal}
  Consider $\beta_0 =\lambda_\theta (\gamma_0)$ in an order $q$ limb $\cL_\theta$.
  Assume that $\upomega$ has period $p$ under $\varphi_{\beta_0}$.
  Then there exists  a minimal $\ell_0 \ge 1$ such that one of the following holds:
  \begin{itemize}
    \item
      $q \ell_0 =p$ and $R_{v_\theta(\tilde{\gamma_0})}^{\circ \ell_0}(0) =0$.
      \item
      $q \ell_0 < p$ and $R_{v_\theta(\tilde{\gamma_0})}^{\circ \ell_0}(0) =-1$.
  \end{itemize}
\end{corollary}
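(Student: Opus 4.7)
The plan is to track the critical orbit of $\upomega = 0$ under $\varphi_{\beta_0}$ through the balls $B_0, \ldots, B_{q-1}$ and $D_0$ of Proposition~\ref{p:level0}, and then to transfer the combinatorics to the reduced dynamics of $R_v$ (with $v := v_\theta(\tilde{\gamma}_0)$) via the semiconjugacy $\pi_0 \circ \varphi_{\beta_0}^{\circ q} = R_v \circ \pi_0$ on $B_0 \setminus D_0$ provided by Lemma~\ref{l:parabolic-family}, where $\pi_0 := \rd \circ L_\theta$. I would define $\ell_0$ to be the smallest integer $\ell \ge 1$ such that $\upomega_{q\ell} \in \{0\} \cup D_0$.

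By Proposition~\ref{p:level0}(2)--(4), as long as $\upomega_{q\ell}$ lies in $B_0 \setminus D_0$, the next $q$ iterates pass through $B_1, B_2, \ldots, B_{q-1}$ and return to $B_0$. Hence, up to the first time the orbit either returns to $0$ or enters $D_0$, it sits in $B_{k \bmod q}$ at step $k$; in particular, both first-time returns to $0$ and first-time entries into $D_0$ must occur at multiples of $q$. The existence of $\ell_0$ with $q\ell_0 \le p$ then follows from periodicity, since the sub-orbit $(\upomega_{q\ell})_{\ell \ge 0}$ is finite and cannot perpetually avoid $\{0\} \cup D_0$ without forcing $\upomega_p \notin B_0$, contradicting $\upomega_p = 0$.

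Next I would split into cases. In case (a), $\upomega_{q\ell_0} = 0$ and the orbit has stayed in the cyclic pattern throughout. Minimality of $p$ as the period gives $p \mid q\ell_0$; the cyclic pattern forces $q \mid p$; minimality of $\ell_0$ then forces $q\ell_0 = p$. In case (b), $\upomega_{q\ell_0} \in D_0$, and since $0 \notin D_0$ we have $\upomega_{q\ell_0} \ne 0 = \upomega_p$, so $p \nmid q\ell_0$ and in particular $q\ell_0 \ne p$. To rule out $q\ell_0 > p$, note that periodicity would then give $\upomega_{q\ell_0 - p} \in D_0$ at a strictly earlier positive time; but any first entry into $D_0$ occurs at a multiple of $q$ by the argument above, which would contradict the minimality of $\ell_0$.

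Finally, since $\upomega_{q\ell} \in B_0 \setminus D_0$ for $0 \le \ell \le \ell_0 - 1$, an induction using the semiconjugacy and the base case $\pi_0(0) = 0$ yields $\pi_0(\upomega_{q\ell}) = R_v^{\circ \ell}(0)$ for $0 \le \ell \le \ell_0$. In case (a), $\upomega_{q\ell_0} = 0$ gives $R_v^{\circ \ell_0}(0) = 0$; in case (b), $\upomega_{q\ell_0} \in D_0$ gives $R_v^{\circ \ell_0}(0) = \pi_0(D_0) = -1$. The main obstacle is the case (b) bookkeeping that sharpens $q\ell_0 \ne p$ into $q\ell_0 < p$; once this is settled the semiconjugacy delivers the conclusion immediately.
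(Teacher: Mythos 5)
Your proposal is correct and reaches the same conclusion, but it takes a genuinely different route from the paper on the satellite case. Where the paper looks at the maximal open ball $D\ni\upomega$ inside $B_0$, observes that $D$ is periodic under $\varphi^{\circ q}$ when the orbit avoids $D_0$, and then invokes the Schwarz Lemma to conclude that $\upomega$ is the unique periodic point of $\varphi^{\circ q\ell_0}\colon D\to D$ (whence $q\ell_0=p$), you instead argue purely combinatorially: the cyclic pattern $B_0\to B_1\to\cdots\to B_{q-1}\to B_0$ persists until the orbit first meets $\{0\}\cup D_0$, so that the first return to $\{0\}\cup D_0$ occurs at a multiple of $q$; since $\upomega_p=0\in B_0$, this gives $q\mid p$ and, by minimality, $q\ell_0=p$. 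Your approach buys a more elementary argument (no contraction/Schwarz input) at essentially no extra cost. You also explicitly verify $q\ell_0<p$ in the non-central return case, a detail the paper leaves implicit. Incidentally, your case-(b) argument can be shortened: since the first entry into $\{0\}\cup D_0$ occurs at time $q\ell_0\le p$ (because $\upomega_p=0$), and $\upomega_{q\ell_0}\in D_0\not\ni 0$ rules out $q\ell_0=p$, one has $q\ell_0<p$ directly without needing to run the subtraction-by-$p$ argument. Both the paper's and your treatment of the semiconjugacy $\pi_0\circ\varphi^{\circ q}=R_v\circ\pi_0$ on $B_0\setminus D_0$ and of the identifications $\pi_0(0)=0$, $\pi_0(D_0)=-1$ agree.
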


\begin{proof}
  Write $\varphi:=\varphi_{\beta_0}$ and $R_v := R_{v_\theta}(\tgamma_0)$.
Denote by $D$ the maximal open ball contained  in $B_0$ around $\upomega$.
In Lemma~\ref{l:parabolic-family}, 
reduction is a semiconjugacy outside $D_0$.
If the orbit of $\upomega$ is disjoint
from $D_0$, then $D$ is periodic, say of period $\ell_0\ge 1$,
under $\varphi^{\circ q}$. Moreover, $\upomega$ is the unique
periodic point of 
$\varphi^{\circ q \ell_0 } : D \to D$, by Schwarz Lemma.
Therefore, $q \ell_0 =p$ and $D$ is contained in the basin of
attraction of the critical orbit under $\varphi$.
Also, the  critical point $\omega=0$ has
period $\ell_0$ under $R_v$.
If the orbit of $\upomega$ is not disjoint from $D_0$, then there exists a minimal $\ell_0\ge 1$ such that $\upomega_{q\ell_0}(\beta) \in D_0$. In this case, 
$\omega=0$ hits $z=-1$ in $\ell_0$ iterates under $R_v$.
\end{proof}

For a parameter $\beta$ in an order $q$ limb, we say that $\upomega$
has a \emph{non-central return} if $\upomega_{q\ell} (\beta) \in D_0$ for
some $\ell \ge 1$. In this case, we say that $q\ell_0$ is the \emph{first non-central return time} if $\ell_0$ is minimal such that  $\upomega_{q\ell_0} (\beta) \in D_0$.

A parameter without non-central returns is called a \emph{satellite parameter}.

\begin{remark}
  In Berkovich space language, one can phrase Lemma~\ref{l:parabolic-family} as follows.
  Denote by $\per_1(1)$ the space of critically marked complex quadratic
  rational maps with a parabolic fixed point. Note that
  $\per_1(1)$ is an affine line over $\C$.
  Consider an order $q \ge 3$ limb
  $\cL_\theta$, denote by $\lambda_0$ the type II point $\partial \overline{\cL_\theta}$
  and denote by $T_{\lambda_0} \overline{\cL_\theta}$ the space of (finite) directions at $\lambda_0$, naturally endowed with the structure of an affine line over $\C$. Let
  $x_0$ be the type II point $\partial \overline{B_0}$.
  Denote by $T_{x_0} \varphi_\beta^{\circ q}: T_{x_0} \poneberk \to T_{x_0} \poneberk$  the corresponding action on the set of directions at $x_0$.
  Note that $T_{x_0} \varphi_\beta^{\circ q}$ is a well defined conjugacy class
  of maps acting on the $\ponec$-structure of $T_{x_0} \poneberk$.
  Moreover, it belongs to $\per_1(1)$. The lemma, stated in this language,
  says that $\cL_\theta \ni \beta \to T_{x_0} \varphi_\beta^{\circ q} \in \per_1(1)$
  projects to an affine isomorphism  $T_{\lambda_0} \overline{\cL_\theta} \to
  \per_1(1)$. 
  The fact that this map is an isomorphism is of central importance
  to us. 
  We will prove a similar result for ``polynomial
  rescaling limits''. (For $q=2$, with minor modifications, an anologue statement is also true). 
\end{remark}

\subsection{Preliminary bounds}
\label{s:bounds}
The rest of the paper is devoted to the proof of the
Main Lemma~\ref{main-puiseux-l}.
Here we list some basic preliminary bounds.
Specifically, for future reference,
we record the size of $\partial_z \varphi_\beta(z)$ and
$\partial_\beta \varphi_\beta (z)$.
Then,
in \S~\ref{s:satellite},
our work towards proving \eqref{eq:nidentity} for $n=p$
will start (and finish) at $n=p$, for satellite parameters, and at $n=q \ell_0+1$, for parameters with non-central returns.

The critical points of $\varphi_\beta$ 
are $\upomega=0$ and $\upomega' = -2t/\beta$ (resp. $-2t^{-1}/\beta$) if $\val{\beta}\le 1$ (resp. if $\val{\beta} > 1$).
Assuming that $\beta$ lies in some limb $\cL_\theta$,
denote by $D'_0$ the maximal open ball contained in $B_0$
such that $\upomega' \in D_0'$.
Let
$$B'_0:=B_0 \setminus D_0'.$$
and
$$B_{1/2} := C_1 \cup \cdots \cup C_{q-1},$$
where $C_j$ for $1\le j < q$
are the disks from Corollary~\ref{c:action-balls} (2). 
Note that $B_{1/2}$ is the union of $q-1$ balls that depend on $\beta$,
but we omit this dependence in the notation. If $\beta_0$ is  such that $\upomega$ is periodic, then $\upomega_n(\beta_0) \in B_{1/2}$
whenever $\upomega_n(\beta_0) \in D_0$, for otherwise, $\upomega_{n+1}(\beta_0) \in U_0$.

\begin{lemma}
  \label{l:size-derivatives}
  Let $\cL_\theta$ be a limb of order $q \ge 2$.
  Assume $\beta \in \cL_\theta$.

  If $q=2$, then:
  \begin{eqnarray}
    \label{eq:zder2}
\|\partial_z \varphi_\beta(z)\| & = &
\begin{cases}
  \val{z}\cdot \val{t}      & \text{ if } z \in B'_0,\\
  \val{t}^{-1} & \text{ if } z \in B_1 \cup B_{1/2}.
\end{cases}  \\
    \label{eq:bder2}
  \|\partial_\beta \varphi_\beta (z)\|&=&
\begin{cases}
  \val{z}^3 \cdot \val{t}^2      & \text{ if } z \in  B_0' ,\\
   1        & \text{ if } z \in  B_{1/2},\\
  1/\val{z}      & \text{ if } z \in B_1.
\end{cases}
\end{eqnarray}
If $q \ge 3$, then:
\begin{eqnarray}
  \label{eq:zderq}
\|\partial_z \varphi_\beta(z)\| & = &
\begin{cases}
  \val{z}\cdot \val{t}^{-1}      & \text{ if } z \in B'_0 ,\\
  \val{t}^{-2} & \text{ if } z \in B_{1/2},\\
  1 & \text{ if } z \in B_1 \cup \cdots \cup B_{q-1}.
\end{cases}  \\
  \label{eq:bderq}
  \|\partial_\beta \varphi_\beta (z)\|&=&
\begin{cases}
  \val{z}^3 \cdot \val{t}^{-2}      & \text{ if } z \in  B_0' ,\\
   \val{t}^{-1}        & \text{ if } z \in  B_{1/2},\\
   1/\val{z}      & \text{ if } z \in B_1,\\
   1 & \text{ if } z \in B_2 \cup \cdots \cup B_{q-1}.
\end{cases}
\end{eqnarray}
\end{lemma}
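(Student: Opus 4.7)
The plan is to verify all claimed formulas by direct computation. I would work region by region and, within each, apply the strong triangle inequality to the explicit expressions $\varphi_\beta(z)=1+\beta/z+t/z^2$ (for $q\ge 3$, with $\beta=c_\theta+\gamma t$ and $\val{\beta}=1$) and $\varphi_\beta(z)=1+\beta/z+1/(tz^2)$ (for $q=2$, with $\beta=\gamma/t$ and $\val{\beta}=\val{t}^{-1}$).

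First I would note the uniform formula $\partial_\beta\varphi_\beta(z)=1/z$ in both cases, and record $\partial_z\varphi_\beta(z)=-(\beta z+2t)/z^3$ (for $q\ge 3$) or $-(\beta tz+2)/(tz^3)$ (for $q=2$). Then, in each of the regions $B_0'$, $B_{1/2}$, $B_1,\ldots,B_{q-1}$, I would first evaluate $\val{\varphi_\beta(z)}$ by the strong triangle inequality applied to the three summands of $\varphi_\beta$. This tells me whether $\varphi_\beta(z)\in\mathfrak{O}$ or in its complement, hence which coordinate chart is in effect at the image. With that in hand, I would compute $\val{\partial_z\varphi_\beta(z)}$ and $\val{\partial_\beta\varphi_\beta(z)}$ (again by the strong triangle inequality) and pass to spherical norms using the conventions recorded in \S\ref{s:spherical}: divide by $\max(1,\val{\varphi_\beta(z)})^2$ for the parameter derivative, and multiply additionally by $\max(1,\val{z})^2$ for the dynamical derivative.

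The key non-cancellation inputs come from the defining inequalities of the regions. On $B_0'$ the ball $D_0'$ around the second critical point $\upomega'$ has been excised, which forces $\val{\beta z+2t}=\val{t}$ (respectively $\val{\beta tz+2}=1$ for $q=2$) throughout. On each $B_j$ with $2\le j<q$ the reduction $\tilde z=\upomega_j(c_\theta)$ is a fixed nonzero complex number, so $\val{z}=1$ and the dominant term can be read off directly. On $B_1$ the constant $1$ dominates $\varphi_\beta(z)$. On $B_{1/2}\subset D_0$ the factor $\beta z+t$ is small (the centre of $D_0$ is where it vanishes) while $\beta z+2t$ remains of size $\val{t}$, which is exactly what converts the derivative computation into the stated value. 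The main obstacle is bookkeeping rather than ideas: there are many cases to check, each with its own combination of chart conventions that must be tracked consistently. Proposition~\ref{p:level0} and Corollary~\ref{c:action-balls} already pin down $\varphi_\beta(B_j)$ and $\varphi_\beta(C_j)$ for each $j$, and I would use these as an independent consistency check on the chart used at the image when assembling the spherical norms.
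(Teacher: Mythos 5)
Your approach is precisely the direct computation that the paper has in mind (its own ``proof'' is the single sentence ``The computations are straightforward and left to the reader''), and all the ingredients you identify are the right ones: the uniform formulas $\partial_\beta\varphi_\beta(z)=1/z$ and $\partial_z\varphi_\beta(z)=-(\beta z+2t)/z^3$ (resp.\ $-(\beta t z+2)/(tz^3)$), the non-cancellation of $\beta z+2t$ on $B_0'$ coming from the excision of $D_0'$, the smallness of $\beta z+t$ on $B_{1/2}\subset D_0$, the chart bookkeeping via $\max(1,\val{z})^2/\max(1,\val{\varphi_\beta(z)})^2$, and the use of Proposition~\ref{p:level0}/Corollary~\ref{c:action-balls} to pin down which chart applies at the image. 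This is the same proof.
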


\begin{proof}
  The computations are straightforward and left to the reader.
\end{proof}

Denote by $\diam (B)$ the spherical diameter of a ball contained in $\mathfrak{O}$
or in $\pone_\L \setminus \mathfrak{O}$. 
Observe that if $z \in B_j$ for some $j\neq 0$, then
$$\| \partial_z \varphi_\beta (z) \| = \dfrac{\diam (B_{j+1})}{\diam (B_j)},$$
subscripts mod $q$.
If
$z \in B_0'$ and $\val{z} = \diam (B_0)$,
then 
$$\| \partial_z \varphi_\beta (z) \| = \dfrac{\diam (B_{1})}{\diam (B_0)}.$$
This can be checked directly as an application of Lemma~\ref{l:size-derivatives} or deduced
from Schwarz Lemma. 

The semiconjugacy $\rd(L_\theta(z))$ furnished by Lemma~\ref{l:parabolic-family}
maps the open ball $D_0'$
onto a critical point $\omega'$ which lies in the basin of the multiple
fixed point, 
under the corresponding
parabolic map. Hence,  the orbit of $\upomega$ is disjoint from $D_0'$
and every point in $D_0'$ has infinite orbit.

% Introduce $D_0(z)$. Derivatives in $B_j$ not $0$ and $D_0(z)$ not critical.
% Derivative of $q \ell_0$.

\subsection{Satellite and first non-central return}
\label{s:satellite}

We now prove the initial cases of \eqref{eq:nidentity} according
to whether a first non-central return exists or not.

\begin{lemma}
  \label{l:satellite}
Consider a parameter  ${\beta_0}$  in an order $q$ limb such that $\upomega$ has period $p$.
Then the following statements hold:
\begin{enumerate}
\item
If $\beta_0$ is a satellite parameter, then
\begin{equation*}
  \|{\partial_\beta \upomega_{p}(\beta_0)}\| = \|{\partial_z \varphi^{\circ p-1}_{\beta_0} (\infty)}\| \cdot \val{\tau}^2.
\end{equation*}

\item
If  $q\ell_0  \ge 1$ is the first non-central return time of $\upomega$ under $\varphi_{\beta_0}$, then
\begin{eqnarray*}
\| \partial_z \varphi^{q \ell_0-1}_{\beta_0}  (\infty) \| &=& \val{\tau}^{-1}\\
 \|{\partial_\beta \upomega_{q \ell_0+1}(\beta_0)}\|
% = \|{\partial_z \varphi_{\beta_0} (\omega_{q\ell_0}(\beta_0))}\|
\cdot \val{\tau} &=& \|{\partial_z \varphi^{\circ q\ell_0}_{\beta_0}
(\infty)}\|\cdot \val{\tau}^2.
\end{eqnarray*}
\end{enumerate}
\end{lemma}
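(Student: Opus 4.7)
The plan is to prove both parts of the lemma in parallel, using the parabolic rescaling semiconjugacy of Lemma~\ref{l:parabolic-family} together with the complex transversality in Lemma~\ref{l:transversality}, and tracking spherical derivatives via Lemma~\ref{l:size-derivatives}. Throughout, I would rewrite the identities as comparisons between the non-Archimedean order of the parameter derivative $\partial_\beta \upomega_n$, computed after moving along the limb coordinate $\gamma$ with $\beta = \lambda_\theta(\gamma)$, and the chain-rule product $\prod_{j=1}^{n-1}\|\partial_z \varphi_{\beta_0}(\upomega_j)\|$ dictated by Lemma~\ref{l:size-derivatives}. The factors of $\val{\tau}$ on both sides trace back to the Jacobian $d\gamma/d\beta$ of the limb reparametrization (equal to $1/t$ for $q\ge 3$ and to $t$ for $q=2$) combined with the scaling $L_\theta$ of the parabolic chart.

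For part (1), Corollary~\ref{c:minimal} yields $p=q\ell_0$ and $R_{v_\theta(\tilde\gamma_0)}^{\circ\ell_0}(0)=0$, and Lemma~\ref{l:parabolic-family} gives $L_\theta(\upomega_{q\ell_0}(\lambda_\theta(\gamma)))\equiv R_{v_\theta(\tilde\gamma)}^{\circ\ell_0}(0)\pmod{\mathfrak M}$. I would differentiate this congruence in $\gamma$ at $\gamma_0$: the right side reduces to $\partial_v R_v^{\circ\ell_0}(0)|_{v_0}\cdot v_\theta'(\tilde\gamma_0)$, which is nonzero by Lemma~\ref{l:transversality} combined with the non-constancy of the affine map $v_\theta$ in Lemma~\ref{l:parabolic-family}. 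This pins down $\val{\partial_\gamma(L_\theta\,\upomega_p)|_{\gamma_0}}=1$, and $\|\partial_\beta\upomega_p\|$ is then recovered by unwinding $L_\theta$ and the limb chain rule. On the dynamical side, the spherical chain rule and Lemma~\ref{l:size-derivatives} compute $\|\partial_z\varphi_{\beta_0}^{\circ p-1}(\infty)\|$ as a product over the $\ell_0$ cycles $B_0\to B_1\to\cdots\to B_{q-1}\to B_0$; the satellite condition forces each intermediate $\upomega_{kq}$, $1\le k<\ell_0$, to reduce to a nonzero point under $\pi_0$, which fixes the values of $\val{\upomega_{kq}}$ appearing in the relevant formula. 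A direct comparison then yields the identity.

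For part (2), the same scheme applies, now with $R_{v_\theta(\tilde\gamma_0)}^{\circ\ell_0}(0)=-1$ so that $\upomega_{q\ell_0}\in D_0$. The first identity $\|\partial_z\varphi_{\beta_0}^{\circ q\ell_0-1}(\infty)\|=\val\tau^{-1}$ is just the chain-rule product from part (1), truncated one step before the entry into $D_0$. For the second identity, I would write
\[
\partial_\beta \upomega_{q\ell_0+1} \;=\; (\partial_\beta\varphi_\beta)(\upomega_{q\ell_0}) \;+\; \varphi_\beta'(\upomega_{q\ell_0})\cdot\partial_\beta\upomega_{q\ell_0},
\]
and show that the right side is dominated by its second summand. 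Its magnitude is $\|\partial_z\varphi_{\beta_0}(\upomega_{q\ell_0})\|\cdot\|\partial_\beta\upomega_{q\ell_0}\|$, where the first factor is computed from the bijection $\varphi_\beta\colon D_0\to\pone_\L\setminus B_1$ of Proposition~\ref{p:level0}(4) and the second is obtained exactly as in part (1) by differentiating the semiconjugacy, but this time at the non-zero value $-1$ rather than at $0$. Multiplying in the previously computed contributions from $j=1,\dots,q\ell_0-1$ collapses the product to $\|\partial_z\varphi^{\circ q\ell_0}_{\beta_0}(\infty)\|\cdot\val\tau$, as claimed.

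The main obstacle is the careful bookkeeping of spherical derivatives on $D_0$: Lemma~\ref{l:size-derivatives} is stated for the degree-two map $B_0\setminus D_0\to B_1$, whereas at points of $D_0$ one must use a Schwarz-type computation on the bijection $D_0\to\pone_\L\setminus B_1$, giving a different magnitude in terms of the ratio of the spherical diameters. I would need to reconcile these two formulas to produce the last factor in the chain-rule product and, simultaneously, to verify that the additive term $(\partial_\beta\varphi_\beta)(\upomega_{q\ell_0})$ in the expansion above is strictly dominated by $\varphi_\beta'(\upomega_{q\ell_0})\cdot\partial_\beta\upomega_{q\ell_0}$. The latter amounts to ruling out a non-Archimedean cancellation between the two summands, which is precisely what the non-vanishing supplied by Lemma~\ref{l:transversality} is designed to forbid.
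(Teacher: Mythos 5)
Your overall strategy matches the paper: differentiate the parabolic rescaling semiconjugacy of Lemma~\ref{l:parabolic-family} in the limb coordinate, invoke the complex transversality of Lemma~\ref{l:transversality} to get a nonzero residue, and combine with chain-rule products from Lemma~\ref{l:size-derivatives}. However, your argument for part~(2) contains a concrete gap at the very step you flag as the ``main obstacle.''

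You claim that in the expansion
\[
\partial_\beta \upomega_{q\ell_0+1} = (\partial_\beta\varphi_\beta)(\upomega_{q\ell_0}) + \partial_z\varphi_{\beta_0}(\upomega_{q\ell_0})\cdot\partial_\beta\upomega_{q\ell_0}
\]
``the right side is dominated by its second summand,'' and that the transversality ``rules out a non-Archimedean cancellation between the two summands.'' These two assertions are not equivalent, and for the order-$2$ limb the first one is simply false. When $q\ge 3$ the second summand does have strictly larger absolute value (order $\val{t}^{-2}$ versus $\val{t}^{-1}$), so the strong triangle inequality alone gives the estimate and no cancellation is even possible -- there the role of Lemma~\ref{l:transversality} is only to pin down $\val{\partial_\beta\upomega_{q\ell_0}}=1$, a scalar entering the second summand. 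When $q=2$ the two summands have the \emph{same} order (both of size $1$), and an explicit computation shows that their leading parts genuinely cancel in part: one finds $\partial_\beta\varphi_\beta(\upomega_{2\ell_0}) = \tilde\gamma_0 + o(1)$ and $\partial_z\varphi_{\beta_0}(\upomega_{2\ell_0})\cdot\partial_\beta\upomega_{2\ell_0} = (\tilde\gamma_0^2 C - \tilde\gamma_0) + o(1)$, whose sum is $\tilde\gamma_0^2 C + o(1)$. The $\tilde\gamma_0$ contributions do cancel, and what survives is precisely the transversality constant $C = \frac{d}{d\tilde\gamma}R^{\circ\ell_0}_{\tilde\gamma}(0)|_{\tilde\gamma_0}\neq 0$. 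Depending on $\tilde\gamma_0$ and $C$ it may even be the \emph{first} summand that has the larger or equal absolute value. So in the $q=2$ case you cannot avoid computing the combination explicitly; a dominance-plus-strong-triangle-inequality shortcut does not go through.

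A secondary point: for $q=2$ the rescaling chart $L_{1/2}(z)=\gamma z$ itself depends on $\gamma$, so when you ``differentiate the congruence in $\gamma$'' you pick up a product-rule term $\upomega_{q\ell_0}(\beta_0)$ inside the reduction, which you must carry along. This extra term is exactly what produces the $-\tilde\gamma_0$ in the second summand that cancels against the first, so it is not a bookkeeping nuisance but the mechanism behind the cancellation. For $q\ge 3$ the chart $L_\theta(z) = t^{-1}c_\theta z$ is $\gamma$-independent and this term is absent, which is why the two limb types behave differently and why the paper handles them in separate computations.

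Your account of part~(1) and of the first identity of part~(2) is consistent with the paper's proof and correct in outline.
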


\begin{proof}
  If $\beta_0$ is a satellite parameter, it will be convenient to let $\ell_0=p/q$.
  Otherwise, the first non-central return is $q\ell_0$ as is in the statement
  of the Lemma.
  
  We consider first the case in which $\beta_0$ lies in
  the $1/2$-limb. In this limb, for $\val{\gamma}=1$, 
  let $\lambda(\gamma):= \gamma/t$. Then,
$\lambda(\gamma_0) = \beta_0$ for  $\gamma_0 := t \beta_0$.
In view of Lemma~\ref{l:parabolic-family}, for all
$z \in B_0 \setminus D_0$,
$$\rd (\gamma \varphi^{\circ 2}_{\lambda(\gamma)} (z/\gamma)) =
R_\tgamma(\tilde{z}).$$

Hence,
\begin{eqnarray*}
  \rd(\upomega_{2 \ell_0}(\beta_0) + \gamma_0
   \partial_\beta \upomega_{2 \ell_0}(\beta_0) t^{-1}) &=&
  \rd\left(\dfrac{d}{d\gamma}(\gamma \upomega_{2\ell_0}(\lambda(\gamma)))|_{\gamma=\gamma_0} \right) \\
& =& \dfrac{d}{d \tgamma} \rd \left(\gamma \upomega_{2 \ell_0}(\lambda(\gamma) \right)|_{\tgamma=\tgamma_0}\\
                                                                                            &=& \dfrac{d}{d\tgamma} R^{\circ \ell_0}_{\tgamma}(0)|_{\tgamma=\tgamma_0}\\
                                                                                            &= & C \neq 0.
\end{eqnarray*}
The first line is an immediate consequence of the product and chain rules taking into account that $\beta=\gamma/t$. Reduction and derivatives commute 
in $B_0 \setminus D_0$, which yields the second line. Reduction and iteration also
commute in $B_0 \setminus D_0$, thus the third line follows.
Finally, note that $R^{\circ \ell_0}_{\tgamma_0} (0) =0$ or $-1$, so the last line
is a consequence of Lemma~\ref{l:transversality}.

If $2 \ell_0 =p$, then $\upomega_{2 \ell_0} (\beta_0) =0$.
We conclude that $$\| \partial_{\beta} \upomega_{2 \ell_0} (\beta_0) \| =
\val{\partial_{\beta} \upomega_{2 \ell_0} (\beta_0)} = \val{t}.$$

If $2 \ell_0 <p$, then
\begin{eqnarray*}
\upomega_{2 \ell_0} (\beta_0) &=&1/\tgamma_0 + o(1).\\
\partial_\beta \varphi_\beta (\upomega_{2 \ell_0} (\beta_0)) &=& \tgamma_0 + o(1).\\
\partial_z \varphi_{\beta_0}(\upomega_{2 \ell_0} (\beta_0)) & =&
                                                       -\beta_0 \cdot \tgamma_0^2 - 2 \tgamma_0^3/t\\
                                               & = & \tgamma_0^3/t.
\end{eqnarray*}                                                       
Therefore,
\begin{eqnarray*}
 \partial_\beta \upomega_{2 \ell_0 + 1} (\beta_0) &=&  \partial_\beta \varphi_\beta (\upomega_{2 \ell_0}(\beta_0)) + \partial_z \varphi_\beta(\upomega_{2 \ell_0}) \partial_\beta \upomega_{2 \ell_0} ({\beta}_0)\\
                                                   &=& {\tgamma_0} + \dfrac{\tgamma_0^3}{t} \cdot t \dfrac{C-1/\tgamma_0}{\tgamma_0} +o(1)\\
  &=&\tgamma_0^2 C +o(1).  
\end{eqnarray*}
Hence,
$$\|\partial_\beta \upomega_{2 \ell_0 + 1} (\beta_0)\| = \val{\partial_\beta \upomega_{2 \ell_0 + 1} (\beta_0)}=1,$$
since $\upomega_{2 \ell_0 + 1} (\beta_0) \in B_0$.

By~\eqref{eq:zder2},
$$\| \partial_z \varphi^{2 \ell_0-1}_{\beta_0}  (\infty) \| = \val{t}^{-1},$$
and the lemma follows for $q=2$.

% \begin{lemma}
% Consider ${\beta_0}$ such that $\upomega=0$ has period $p$ under $\psi_{{\beta_0}}$.
%     Let ${\ell_0}  \ge 1$ be the minimal integer such that
%   $\upomega_{q {\ell_0}}(\beta_0) \in B_{1/2}$ or
%   $\upomega_{q {\ell_0}}(\beta_0) =0$ (i.e. $2\ell_0=p$). Then,
%   \begin{eqnarray*}
%     |\partial_\beta \upomega_{p}(\beta_0)| &=& \|\dfrac{\psi^{\circ p-1}_{\beta_0}}{dz} (\infty)\| , \text{   if } 2\ell_0=p,\\
%     |\partial_\beta \upomega_{q \ell_0+1}(\beta_0)| &=&
% \left\|\psi_{\beta_0}' (\upomega_{q\ell_0}(\beta_0))
%   \right\|
% , \text{ if } 2\ell_0<p.
%   \end{eqnarray*}
% \end{lemma}

Now consider the case in which $q \ge 3$.
For $\gamma \in \mathfrak{O}$, let $\lambda(\gamma) := c_\theta + \lambda t$
be the standard parametrization of $\cL_\theta$.
Then, $\lambda(\gamma_0) = \beta_0$ for $\gamma_0 := (\beta_0-c_\theta)/t$.
Recall that  for all $z \in B_0\setminus D_0 $,
$$\rd (t^{-1} c_\theta \varphi^{\circ q}_{\lambda(\gamma)}
(z/t^{-1} c_\theta)) = R_{v(\tgamma)}(\tilde{z})$$
where, $v(\tgamma)$ is a $\C$-affine automorphism (Lemma~\ref{l:parabolic-family}).
As above, the chain rule and commutative properties of derivatives, reduction and iteration yield:
\begin{eqnarray*}
\rd(c_\theta\partial_\beta \upomega_{q \ell_0}(\beta_0)) &=&
                   \rd\left(\dfrac{d}{d\gamma}( t^{-1}c_\theta\upomega_{q\ell_0}(\beta(\gamma)))|_{\gamma=\gamma_0} \right) \\
& =& \dfrac{d}{d \tgamma} \rd \left(t^{-1} c_\theta \upomega_{q \ell_0}(\beta(\gamma) \right)|_{\tgamma=\tgamma_0}\\
                                                                                            &=& \dfrac{d}{d\tgamma} R^{\circ \ell_0}_{v(\tgamma)}(0)|_{\tgamma=\tgamma_0}\\
                                                                                                &\neq & 0.
\end{eqnarray*}

If $q \ell_0 =p$, then $\upomega_{q \ell_0} (\beta_0) =0$.
We conclude that $$\nor{\partial_\beta \upomega_{q \ell_0} (\beta_0)}=\val{\partial_\beta \upomega_{q \ell_0} (\beta_0)} = 1.$$

If $q \ell_0 <p$, then
\begin{eqnarray*}
\upomega_{q \ell_0} (\beta_0) &=&t/c_\theta + o(t).\\
\partial_\beta \varphi_\beta (\upomega_{q \ell_0} (\beta_0)) &=& c_\theta t^{-1} + o(t^{-1}).\\
\partial_z \varphi_{\beta_0}(\upomega_{q \ell_0} (\beta_0)) & =&
\beta_0 c^2_\theta t^{-2} - 2t c_\theta^3 t^{-3} + o(t^{-2})\\
& = & c_\theta^3 t^{-2} +o(t^{-2}).
\end{eqnarray*}                                                       

Therefore,
\begin{eqnarray*}
  \val{\partial_\beta \upomega_{q \ell_0 + 1} (\beta_0)} &=& \val{\partial_\beta \varphi_\beta (\upomega_{q \ell_0}(\beta_0)) + \partial_z \varphi_{\beta_0}(\upomega_{q \ell_0}(\beta_0)) \partial_\beta \upomega_{q \ell_0} ({\beta}_0)}\\
     &=&\val{\partial_z \varphi_{\beta_0}(\upomega_{q \ell_0}(\beta_0))}.
\end{eqnarray*}
Moreover, $\upomega_{q\ell_0} (\beta_0) ,\upomega_{q\ell_0 + 1} (\beta_0) \notin B_1$ so
$$\nor{\partial_\beta \upomega_{q \ell_0 + 1} (\beta_0)}=\nor{\partial_z \varphi_{\beta_0}(\upomega_{q \ell_0}(\beta_0))}.$$
Furthermore, $$\nor{\partial_z \varphi_{\beta_0}^{q \ell_0-1} (\infty)} = 1,$$
by \eqref{eq:zderq}. For $q \ge 3$, the lemma now follows from the chain rule.
\end{proof}

\subsection{Geometry of polynomial rescaling limits}

Given $\beta$ in an order $q$ limb $\cL_\theta$, the \emph{filled
  Julia set} of $\varphi_\beta$ is
$$\cK(\varphi_\beta) := \{ z \in \ponel : \varphi^{\circ n}_\beta (z) \notin U_0 \text{ for all } n \ge 0 \}.$$
We are interested on the geometry of $\cK(\varphi_\beta)$ around $\upomega$
for parameters with non-central returns.

\begin{proposition}
  \label{p:x}
  Let $\beta_0$ be a  parameter of period $p$ in an order $q$ limb
  such that $\upomega$ has a non-central return.
  Then there exist a periodic rational closed ball $X_0$
  around $\upomega$ of period $q'>q$. Moreover, 
  $\varphi^{\circ q'}_\beta : X_0 \to X_0$ has degree $2$.
\end{proposition}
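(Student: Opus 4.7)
The plan is to exhibit $X_0$ as a small rational closed ball around $\upomega$ preserved by some iterate of $\varphi := \varphi_{\beta_0}$, then verify the period and degree assertions using the non-central return hypothesis together with the reduction semiconjugacy from Lemma~\ref{l:parabolic-family}.

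First, I would construct $X_0$ from the super-attracting structure at $\upomega$. Since $\varphi'(\upomega) = 0$ and $\upomega$ has period $p$, the iterate $\varphi^{\circ p}$ has $\upomega$ as a super-attracting fixed point, with a local expansion of the form $\varphi^{\circ p}(z) = c z^2 + O(z^3)$ near $\upomega$. Combined with the non-Archimedean Schwarz Lemma, this yields a smallest $q' \mid p$ and a rational closed ball $X_0$ around $\upomega$ satisfying $\varphi^{\circ q'}(X_0) = X_0$; by choosing $X_0$ small, I may arrange $X_0 \subset D$, where $D$ denotes the maximal open subball of $B_0$ containing $\upomega$.

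Next, I would establish $q' > q$ using the non-central return hypothesis. Writing $\beta_0 = \lambda_\theta(\gamma_0)$ and $v := v_\theta(\tgamma_0)$, Lemma~\ref{l:parabolic-family} furnishes a semiconjugacy $\rd \circ L_\theta$ between $\varphi^{\circ q}$ on $B_0 \setminus D_0$ and $R_v$ on $\C \setminus \{-1\}$. The non-central return condition $R_v^{\circ \ell_0}(0) = -1$ with minimal $\ell_0 \geq 1$ forces $v \neq 0$, since otherwise $0$ would be fixed by $R_v$. Consequently $\rd(L_\theta(\upomega_q(\beta_0))) = v \neq 0$, i.e.\ $\upomega_q(\beta_0) \notin D$. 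Since $\varphi^{\circ q'}(X_0) = X_0 \subset D$ forces $\upomega_{q'}(\beta_0) \in D$, we cannot have $q' = q$. For $q' < q$, the iterate $\upomega_{q'}(\beta_0)$ lies in $B_{q'}$, which is disjoint from $B_0 \supset D$, again contradicting $\upomega_{q'}(\beta_0) \in X_0$. Hence $q' > q$.

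Finally, I would show that $\varphi^{\circ q'} : X_0 \to X_0$ has degree $2$ by applying Riemann--Hurwitz on the ball $X_0$. The critical point $\upomega$ contributes multiplicity $1$ to the critical set of $\varphi^{\circ q'}$ in $X_0$. After shrinking $X_0$ within $D$ if necessary, the iterates $\varphi^{\circ j}(X_0)$ for $0 < j < q'$ are small balls around $\upomega_j(\beta_0)$ that avoid the free critical point $\upomega'$, which sits in the distinct maximal open subball $D_0' \subset B_0$. Therefore the only critical point of $\varphi^{\circ q'}$ in $X_0$ is $\upomega$ itself, so $\varphi^{\circ q'}|_{X_0}$ has degree $2$.

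The main obstacle is the argument $q' > q$. Its crux is the identification $\rd(L_\theta(\upomega_q(\beta_0))) = v$ from Lemma~\ref{l:parabolic-family}, which uses the parabolic rescaling in an essential way. The non-Archimedean rigidity — that two balls in $\ponel$ are either nested or disjoint — then translates the reduction statement $\upomega_q \notin D$ into the desired exclusion of period $\le q$.
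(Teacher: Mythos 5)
Your argument breaks down at the very first step: the existence of a periodic rational closed ball $X_0$ around $\upomega$ does not follow from the super-attracting local model together with the Schwarz Lemma. Near a super-attracting periodic point the Schwarz Lemma forces \emph{strict contraction}: if $X$ is a small ball around $\upomega$, then $\varphi^{\circ p}(X)$ is a strictly smaller ball. Consequently there is no small ball fixed by any iterate, and in particular no freedom to ``choose $X_0$ small'' or to ``shrink $X_0$ within $D$ if necessary.'' A periodic ball containing $\upomega$, if it exists, has a uniquely determined diameter, namely $\diam(D)$ where $D$ is the maximal open ball in the filled Julia set $\cK(\varphi_{\beta_0})$ containing $\upomega$; so $D \subsetneq X_0$ rather than $X_0 \subset D$. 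That such an $X_0$ is either $B_0$ itself or a strictly smaller periodic closed rational ball is a global structural fact about quadratic dynamics over $\L$ — it is precisely Proposition~5.3 of~\cite{KiwiPuiseuxQuadratic}, which the paper invokes here. It cannot be deduced from local data at $\upomega$; it requires knowing that the boundary point of $\overline{D}$ in $\poneberk$ is a repelling periodic (or the Gauss) type~II point.

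This gap also undermines your argument for $q' > q$, since it is built on the containment $X_0 \subset D$ to convert $\upomega_q \notin D$ into $q' \neq q$. With the correct containment $D \subsetneq X_0$, concluding $\upomega_q \in X_0 \Rightarrow \upomega_q \in D$ would need an extra step. The paper's route is more direct: first exclude $X_0 = B_0$ (because the non-central return forces the orbit of $D$ to hit $D_0$, which escapes to $U_0$, contradicting $D \subset \cK$), then observe $q' > q$ because the orbit of the periodic ball enters $D_0$ at time $q\ell_0$, and $D_0$ is disjoint from $B_0$. Your semiconjugacy observation that $v := v_\theta(\tgamma_0) \neq 0$ is a valid and potentially useful fact, but by itself it only places $\upomega_q$ outside the maximal open subball of $B_0$ around $\upomega$; turning it into the exclusion $q' \neq q$ still requires correctly located periodic balls. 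The Riemann--Hurwitz argument for degree~$2$ is essentially right in spirit, but again should not invoke shrinking $X_0$: instead use that the orbit of $\upomega$ (hence of the $X_j$, which are contained in the filled Julia set) never meets $D_0'$, since the free critical point is attracted to the parabolic fixed point.
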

Clearly, $X_0 \subset \cK(\varphi_{\beta_0})$ and $q'$ divides $p$.

\begin{proof}
  The proposition is consequence of results from \cite{KiwiPuiseuxQuadratic}.
  Let 
  $D$ be the maximal open ball such that $\upomega \in D \subset \cK(\varphi_\beta)$. 
  Let $X_0$ 
  be the closed ball containing $\upomega$ of  diameter $\diam (D)$.
  By~\cite[Proposition~5.3]{KiwiPuiseuxQuadratic}, $X_0$ is 
  a rational closed ball and either $X_0=B_0$ or $X_0$ is periodic, say of period $q'$.
  We now prove by contradiction that $X_0 \neq B_0$ . Suppose
$X_0=B_0$. Then  $D$ is a maximal open ball in $B_0$.
Since $\beta_0$ has a non-central return,
$D$ eventually maps onto $D_0$. This is
impossible because $D_0$ contains a preimage of $U_0$.
Therefore, $X_0$ is a periodic ball. Moreover, $q'>q$, since $X_0$ maps into $D_0$ in $q\ell_0$ iterates, for some $\ell_0 \ge 1$. Furthermore,
the unique critical point of $\varphi^{\circ q'}_\beta : X_0 \to X_0$ is $\upomega$, so it has degree $2$.
\end{proof}

\begin{lemma}
  \label{l:unique-ball}
  Suppose that $\beta$ is a parameter in an order $q$ limb.
  If there exists $q' \ge 1$ and a ball $X_0 \ni \upomega$ such that
  $\varphi^{q'}_\beta (X_0) = X_0$, then either $X_0$ is an open ball
  and $\diam (X_0) = \diam (B_0)$, or $X_0$ is the unique periodic
  closed ball containing $\upomega$, $q' >q$ and
  $\upomega$ has a non-central return.
\end{lemma}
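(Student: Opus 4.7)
The plan is to first localize $X_0$ inside $B_0$, rule out $X_0=B_0$, and then use a non-Archimedean B\"ottcher/Schwarz dichotomy to show that the only periodic sub-balls containing $\upomega$ are $D_\upomega$ (giving case~(a)) and the closed ball produced by Proposition~\ref{p:x} (giving case~(b)).

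First, I would use that periodicity of $X_0$ forces every $z\in X_0$ to have bounded forward orbit, so $X_0\subseteq\cK(\varphi_\beta)\subseteq\bigcup_{j=0}^{q-1}B_j$. Since the $B_j$ are pairwise disjoint balls and $\upomega\in X_0\cap B_0$, a short ball-geometry check (the smallest ball containing $B_0$ and any $B_j$ with $j\neq 0$ either contains $\infty$ or meets $U_0$) forces $X_0\subseteq B_0$. Moreover $X_0\neq B_0$, because $\upomega\in B_0$ is a pole of $\varphi_\beta$, so Proposition~\ref{p:level0} yields $\varphi_\beta(B_0)=\pone_\L$, whence $B_0$ is not periodic.

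With $X_0\subsetneq B_0$ and $D_\upomega$ the unique maximal open sub-ball of $B_0$ containing $\upomega$, either $X_0=D_\upomega$ (conclusion~(a)) or $X_0\subsetneq D_\upomega$. In the latter case I would first rule out the satellite regime: when $\upomega$ avoids $D_0$, the orbit of $D_\upomega$ remains in the cycle of maximal open balls $D_{\upomega_j}\subseteq B_j$, so $D_\upomega$ is itself periodic of period $p$ and a non-Archimedean B\"ottcher coordinate conjugates $\varphi_\beta^{\circ p}|_{D_\upomega}$ to $w\mapsto w^2$ on $\{\val{w}<1\}$; this model admits no proper periodic sub-ball containing $0$, contradicting $X_0\subsetneq D_\upomega$. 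Therefore $\upomega$ must have a non-central return.

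In this non-central setting, Proposition~\ref{p:x} supplies the periodic closed ball $X_0^\star$ of diameter $\diam(D)$ and period $q'>q$, where $D$ is the maximal open ball of $\cK(\varphi_\beta)$ around $\upomega$. Maximality of $D$ forces $\diam(X_0)\le\diam(D)$, because a closed ball of larger diameter lying in $\cK$ would contain an open ball in $\cK$ around $\upomega$ of diameter exceeding $\diam(D)$. On $X_0^\star$, the self-map $\varphi_\beta^{\circ q'}$ rescales to an analytic self-map $\psi$ of $\mathfrak{O}$ whose reduction is a quadratic polynomial $P$ with critical point at the residue of $\upomega$; since $P'(\tilde{\upomega})=0$, the non-Archimedean Schwarz Lemma gives $\val{(\psi^{\circ n})'(\tilde{\upomega})}<1$ for every $n\ge 1$, so $\psi^{\circ n}$ strictly contracts every sub-ball through $\upomega$. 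This excludes both $X_0\subsetneq X_0^\star$ and $X_0=D$ (the open companion of $X_0^\star$), leaving $X_0=X_0^\star$ and completing case~(b). The main difficulty will be setting up the two rescaling limits---the satellite-case B\"ottcher coordinate on $D_\upomega$ and the polynomial rescaling of $\varphi_\beta^{\circ q'}$ on $X_0^\star$---both of which follow from the non-Archimedean quadratic dynamics results of \cite{KiwiPuiseuxQuadratic}.
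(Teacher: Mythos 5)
Your overall architecture is genuinely different from the paper's and is, for the most part, cleaner. The paper obtains the dichotomy in one stroke by observing that $\partial\overline{X_0}\subset\poneberk$ is a repelling periodic point and then invoking the classification from \cite[Theorem 1]{KiwiPuiseuxQuadratic}; after that, the paper only needs the Schwarz Lemma to rule out the satellite case. You instead work entirely in $\pone_\L$: the localization $X_0\subseteq B_0$ via the filled Julia set and ball geometry, the exclusion $X_0\neq B_0$ via the pole, the B\"ottcher model on $D_\upomega$ in the satellite regime, and the rescaled quadratic return map $\psi$ on $X_0^\star$ with a Schwarz-type contraction otherwise. This is a more elementary and self-contained route that makes the mechanism visible without appealing to the Berkovich classification theorem; the tradeoff is that you must handle each competing type of ball by hand, and this is where a genuine gap appears.

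The gap is in the final step, where you assert that the contraction ``excludes both $X_0\subsetneq X_0^\star$ and $X_0=D$ (the open companion of $X_0^\star$).'' Your contraction argument proves that for any ball $X$ with $\upomega\in X\subsetneq D$ (radius $r<1$ in the rescaled coordinate) one has $\diam(\psi^{\circ n}(X))<\diam(X)$, which correctly rules out $X_0\subsetneq D$. But it says nothing about $X_0=D$ itself: after rescaling, $D$ becomes $\mathfrak{M}$, and if $\tilde\psi(\tilde z)=A\tilde z^2+C$ with $\tilde\psi^{\circ p'}(0)=0$ (where $p'=p/q''$ is the period of $0$ under $\tilde\psi$), then $\psi^{\circ p'}(\mathfrak{M})=\mathfrak{M}$, so $D$ is \emph{not} contracted under the $p'$-th iterate of the return map. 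In fact the paper itself, in the proof of Proposition~\ref{p:parameter-poly}(3), records that this open ball $D'$ is periodic of period $q'p'=p$ under $\varphi_{\beta_0}$. So $D$ is a legitimate periodic ball through $\upomega$, the derivative bound $\val{(\psi^{\circ n})'(\tilde\upomega)}<1$ does not imply strict contraction of $\mathfrak{M}$, and your claim to exclude $X_0=D$ is not supported by the estimate you invoke. You would need a separate argument for this configuration (for example, the Berkovich classification the paper uses asserts the periodic ball is \emph{closed} rational; or one must observe that in the eventual application only balls strictly containing $X_0^\star$ are relevant), or else account for $X_0=D$ as an additional conclusion.

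Two minor points worth tightening. First, in your localization step the claim that periodicity of $X_0$ forces $X_0\subseteq\cK(\varphi_\beta)$ deserves a sentence: one should note that $\varphi_\beta^{\circ j}(X_0)$ remains a genuine ball for all $j<q'$ (it cannot degenerate to $\pone_\L$, or $\varphi_\beta^{\circ q'}(X_0)$ could not equal $X_0$), and none of these balls can meet $U_0$, since once a ball enters the $\varphi_\beta$-invariant domain $U_0$ it never returns. Second, when you write the dichotomy ``either $X_0=D_\upomega$ or $X_0\subsetneq D_\upomega$'' you are implicitly using that a closed or irrational ball with $D_\upomega\subsetneq X_0\subsetneq B_0$ cannot exist because $\diam(D_\upomega)=\diam(B_0)$; this is fine, but stating it avoids the appearance that intermediate cases were overlooked.
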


\begin{proof}
  Since $\partial \overline{X_0} \subset \poneberk$ is a (Berkovich) repelling
  periodic point of $\varphi_\beta: \poneberk \to \poneberk$, according to~\cite[Theorem 1]{KiwiPuiseuxQuadratic},
  we have that $\partial \overline{X_0} = \partial \overline{B_0}$ or $X_0$ is a closed rational
  ball properly contained in $B_0$. 
  In the first case, taking into account that some points of $B_0$ map into $U_0$, we have
  that $B_0$ is not a periodic ball so $X_0$ is a maximal open ball
  of $B_0$.
  In the second, we claim that $\upomega$ has a non-central return.
  By contradiction, suppose that $\beta$ is a
  satellite parameter, then $\varphi_\beta^{\circ q'}(D') =D'$, where $D'$
  is the open ball around $\upomega$ of diameter $\diam(B_0)$.
  By Schwarz Lemma, $\varphi^{\circ q'}_\beta(X_0) \subsetneq X_0 \subsetneq D'$ which
  is a contradiction. Therefore, $\upomega$ has a non-central return and $q'>q$. 
\end{proof}

%We say that $\beta_0$ has a \emph{period $q'$ primitive renormalization}.

The size of the balls in the orbit of $X_0$ is controlled by the following result.

\begin{lemma}
  \label{l:smallest-ball}
  Let $\beta$ be a parameter in an order $q$ limb.
  Suppose that $X_0$ is a periodic rational closed ball
  of period $q'$ containing $\upomega$ and $q\ell_0$ 
  is the first non-central return time of $\upomega$.
  Given $0 \le j <q'$, consider $k$ such that
  $$X_j := \varphi^{\circ j}_{\beta_0} (X_0) \subset B_k$$
  and let
  $$L_j := \log \dfrac{\diam (B_k)}{\diam (X_j)}.$$
 Then $$L_1 = \dots =L_{q\ell_0} > L_j,$$
  for all $j=q \ell_0 +1, \dots, q'$.
\end{lemma}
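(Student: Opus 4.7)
The plan is to exploit that, among $X_0,X_1,\ldots,X_{q'-1}$, only $X_0$ contains a critical point of $\varphi_{\beta_0}$. The point $\upomega$ lies in $X_0$ and, by Proposition~\ref{p:x}, is the unique critical point of the degree-$2$ map $\varphi^{\circ q'}|_{X_0}$; the other critical point $\upomega'$ lies in the basin of the multiple fixed point under the parabolic rescaling (as noted at the end of \S\ref{s:bounds}), hence cannot belong to any $X_j \subset \cK(\varphi_{\beta_0})$. Consequently $\varphi:X_j \to X_{j+1}$ is injective for $1 \le j < q'$, and the equality case of the Schwarz Lemma gives
\[
  L_{j+1} - L_j = \log\frac{\diam(B_{k(j+1)})}{\diam(B_{k(j)})} - \log \|\partial_z \varphi\|_{X_j}, \qquad X_j \subset B_{k(j)}.
\]

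I would then carry out a region-by-region comparison. For $1 \le j \le q\ell_0 - 1$, the orbit of $\upomega$ has not yet re-entered $D_0$, so $X_j$ is either in some $B_k$ with $k \ne 0$, or lies in $B_0 \setminus D_0$ centered at $\upomega_{q\ell}(\beta_0)$ with $1 \le \ell < \ell_0$. In the latter case, the minimality of $\ell_0$ in Corollary~\ref{c:minimal} excludes $R_v^{\circ \ell}(0) \in \{0,-1\}$, which via Lemma~\ref{l:parabolic-family} forces $\val{\upomega_{q\ell}(\beta_0)}$ to attain the generic value $\val{t}$ (for $q \ge 3$) or $1$ (for $q = 2$). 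The explicit formulas of Lemma~\ref{l:size-derivatives}, together with $\diam(B_k) = \val{t}$ for $k \ne 0$ (and $\diam(B_0) = 1$ when $q=2$, else $\val{t}$), then show that derivative and ratio coincide in every region, so $L_{j+1} = L_j$ throughout this range.

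At the step $j = q\ell_0$, the ball $X_{q\ell_0}$ enters $D_0$ and $\|\partial_z\varphi\|$ jumps to $\val{t}^{-2}$ for $q \ge 3$ (resp. $\val{t}^{-1}$ for $q = 2$), while the relevant diameter ratio is $1$, producing the strict drop $L_{q\ell_0} - L_{q\ell_0+1} = 2\ord_0 t$ (resp. $\ord_0 t$). For $j > q\ell_0$ the same analysis shows $L_{j+1} \le L_j$, with equality except at further returns of the orbit to $D_0$, each of which only decreases $L$. Hence $L_j \le L_{q\ell_0+1} < L_{q\ell_0}$ on $q\ell_0 < j \le q'$, yielding the claim.

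The main obstacle is the generic-value claim $\val{\upomega_{q\ell}(\beta_0)} = \val{t}$ (or $1$) for $1 \le \ell < \ell_0$. If instead $R_v^{\circ \ell}(0) = 0$ for some such $\ell$, then $0$ would be periodic of period $\ell$ under $R_v$; reducing the orbit modulo $\ell$ gives $R_v^{\circ \ell_0}(0) = R_v^{\circ(\ell_0 \bmod \ell)}(0)$, contradicting either $R_v^{\circ \ell_0}(0) = -1$ (if $\ell_0 \bmod \ell = 0$) or the minimality of $\ell_0$ (if $0 < \ell_0 \bmod \ell < \ell_0$). Once this is established, everything reduces to the explicit non-Archimedean estimates already assembled in Lemmas~\ref{l:size-derivatives} and~\ref{l:parabolic-family}.
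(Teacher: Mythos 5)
Your step-by-step strategy (comparing $L_j$ with $L_{j+1}$ via the Schwarz equality and the tables in Lemma~\ref{l:size-derivatives}) is correct for $1 \le j \le q\ell_0$, and the reduction-to-$R_v$ argument establishing $\val{\upomega_{q\ell}(\beta_0)}=\val{t}$ for $1\le\ell<\ell_0$ is sound. But the second half, ``for $j > q\ell_0$ the same analysis shows $L_{j+1}\le L_j$,'' is false, and the proof does not close.

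The gap is this: once the orbit has passed through $D_0$, the semiconjugacy to $R_v$ no longer controls the returns of $X_j$ to $B_0$, and those returns \emph{do} penetrate the ``central'' region $D$ around $\upomega$ --- they must, since $X_{q'}=X_0\subset D$. Whenever $X_j\subset B_0'\setminus D_0$ with $\val{\upomega_j(\beta_0)}<\val{t}$ (i.e.\ $X_j\subset D$), Lemma~\ref{l:size-derivatives} gives $\|\partial_z\varphi_\beta\|=\val{\upomega_j(\beta_0)}/\val{t}<1$, so your own displayed formula yields $L_{j+1}-L_j=-\log\|\partial_z\varphi\|>0$: the depth \emph{increases}. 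The paper's proof makes exactly this point explicitly (``If $X_j\subset D$ for some $j\ne 0$, then $\varphi:\Gamma_j\to\Gamma_{j+1}$ is a bijection that multiplies the length of subintervals of $]B_0,S_j]$ by a factor $2$\ldots''), so $L$ is not monotone after the first non-central return. Consequently you cannot conclude $L_j\le L_{q\ell_0+1}$ for $q\ell_0<j\le q'$, and the claimed chain of inequalities collapses.

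What is missing is a global argument bounding $L_j$ by $L_1$ even after increases. The paper achieves this by introducing, for each $j$, the totally ordered interval $\Gamma_j$ of balls between $X_j$ and $B_{k(j)}$ (parametrized by modulus), and then constructing an injective, length-preserving pullback $\psi:\Gamma_j\to\Gamma_1$ by tracing each $B\in\Gamma_j$ back to the first preimage that lies over $\Gamma_1$. Injectivity and length-preservation give $L_j\le L_1$ unconditionally; the strict inequality for $j>q\ell_0$ comes from showing $\psi$ must skip a nontrivial subinterval (otherwise one would produce a second periodic ball, contradicting Lemma~\ref{l:unique-ball}). A local step-by-step comparison cannot replace this, precisely because the returns of $X_j$ near $\upomega$ undo the monotonicity you are relying on.
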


Given a  parameter $\beta$ with a periodic ball $X_0$ as above, we will freely use the notation
for 
$X_j$ and $L_j$ introduced in the lemma, subscripts mod $q'$.

\begin{proof}
  Given $X_j$, let $k$ be such that  $X_j \subset B_k$ and
  $$\Gamma_j := \{ B \subset \ponel : B \text{ is a closed ball and } X_j \subset B \subsetneq B_{k} \}.$$ We use
  reverse interval notation in $\Gamma_j$, which is totally ordered by inclusion. That is, $[A,B] \subset \Gamma_j$ consists of all the
  balls $C$ such that $A \supset C \supset B$.
  Moduli of annuli give a natural  parametrization of  $\Gamma_j$ by a subinterval of $\R$
  (corresponding to the hyperbolic distance in $\poneberk \setminus \ponel$):
$$  \begin{array}{ccc}
      \Gamma_j & \to &   ]-\log \diam (B_{k}),-\log \diam (X_j)]\\
      B & \mapsto & -\log \diam(B).
    \end{array}
    $$
    We simply say that the length of $[A,B]$ is
    the modulus of $A \setminus B$, so the parametrization preserves length.

  We omit the subscript $\beta_0$ and simply write $\varphi$.
  We will analyze the dynamics induced by $\varphi$ on $\cup \Gamma_j$.
  Its action in $\Gamma_j$ will be also denoted by $\varphi$.
  Recall that $\varphi$ maps $D_0 \setminus \cup C_j$ bijectively onto $U_0$.
Denote by $D$ the open ball around $\upomega$ of diameter $\diam (B_0)$.

  If $X_j \subset D_0$, then $X_j \subset C_k$ for some $k$ and
   the portion $]B_0,C_k[$ of
  $\Gamma_j$ maps outside of $\cup \Gamma_j$; that is, $\varphi(B) \notin
  \cup \Gamma_j$, for all $B \in ]B_0,C_k[$.
  Moreover, $[C_k,X_j]$ maps bijectively onto $\Gamma_{j+1}=]B_{k+1},X_{j+1}]$.
  Furthermore, $\varphi: [C_k,X_j] \to \Gamma_{j+1}$ is length preserving since
  $C_k$ maps bijectively onto $B_k$. 

  If $X_j \subset B_k$ for some $k \neq 0$, then $\varphi: \Gamma_{j} \to
  \Gamma_{j+1}=]B_{k+1},X_j]$ is a length preserving bijection, since
  $\varphi$ maps $B_k$ onto $B_{k+1}$ bijectively.

  The interval $]B_0,X_0]$  consists of balls contained in $D$ and  $\varphi:
  A \setminus B \to \varphi(A)\setminus \varphi(B)$ is a degree $2$ map
  if $]A,B] \subset ]B_0,X_0]$. Therefore,
  $\varphi:\Gamma_0 \to \Gamma_1$ is a bijection that multiplies lengths by a factor of $2$.  
  
  If $X_j \subset D$ for some $j\neq 0$, then $\Gamma_j \cap \Gamma_0 =]B_0, S_j]$ for
  some $S_j$. Hence, $\varphi: \Gamma_j \to \Gamma_{j+1}$ is a bijection that
  multiplies the length of subintervals of  $]B_0,S_j]$ by a factor $2$ and preserves lengths of the subintervals of $[S_j,X_j]$.
  
  Note that $\varphi(\Gamma_j) \supset \Gamma_{j+1}$ for all $j$ and equality
 holds only if $X_j$ is not contained in $D_0$.

  %   If $b(j) \neq 0$, then $\varphi : B_{b(j)}\setminus X_j \to   B_{b(j+1)}\setminus X_{j+1}$ is an analytic isomorphism. Hence, $B \mapsto \varphi(B)$ induces
  %   a length preserving bijection $\varphi: \Gamma_j \to \Gamma_{j+1}$.
  %   If $X_j \subset D_0$, then 
  %   If $X_j  \not\subset D_0$ then $\varphi: \Gamma_j \mapsto \Gamma_{j+1}$ is injective.
  % The map is continuous and piecewise affine with slope $2$ at $x$ such that
  % $\upomega \in B_x$ and slope $1$, otherwise.
  % If $x_k \in B_\ell' \subset D_0$, then denote $b_\ell'$ and
  % $[b_0,b_\ell']$ maps into $U_0$ ($[b_1,b_\ell]$),
  % while $[b_\ell', x_k]$ map injectively onto $\Gamma_{k+1}$.

  Now we are ready to compare lengths of $\Gamma_j$ with $\Gamma_1$.
  We claim that, for all $j \ge 1$, there
  exists an injective
  piecewise continuous and
  length preserving map $\psi: \Gamma_j \to \Gamma_1$. Indeed, given $B \in \Gamma_j$, let $n=n(B) \ge 0$ be minimal so that
  there exists $B' \in \Gamma_1$ with the property that
  $\varphi^{\circ n}(B') =B$.
  Such a number $n(B)$ always exists. In fact, $B$ has a unique preimage  $C$ in $\Gamma_{j-1}$. If $C \in \Gamma_0$, then $B\in \Gamma_1$ and $n(B)=0$.
  Otherwise, $C$ maps injectively onto $B$.
  Taking preimages recursively we obtain $n(B)$ and $B' \in \Gamma_1$.
Let $\psi: \Gamma_j \to \Gamma_1$ be the map that sends $B$ to $B'$.
  Note that $n(B) < j$. Moreover, $n(B)$ is non-decreasing. 
  Given $n \ge 0$, let
  $$I_n := \{ B \in \Gamma_j : n = n(B) \}.$$
  If $B'=\psi(B)$ for some  $B \in I_n$,
  then  $B', \varphi(B'), \dots, \varphi^{n-1}(B') \notin \Gamma_0$, for otherwise, $n(B) < n$. Hence, 
 $\psi: I_n \to \Gamma_1$
  preserves length.
  Moreover, $L_1 \le L_j$, since from the definition
  of $n(B)$ we have that  $\psi(I_n) \cap \psi (I_m) = \emptyset$ if $n \neq m$.
  
  If $n(B)$ is constant, say $n$, in $\Gamma_j$, then $\varphi^{\circ n}: \Gamma_1 \to \Gamma_j$ is a bijection. This can only occur if $L_1=L_j$ and $j \le q \ell_0$.

  If $j > q\ell_0$, then $n(B)$ is not constant. Consider $n' > n$
  such that the intervals $I:=I_n$, $I':=I_{n'}$ are not empty.
  Let $J = \psi(I)$ and $J' = \psi(I')$. We claim that the endpoint $B$ of $J$
  cannot be initial point of $J'$. Otherwise, $\varphi^{\circ n}(B)$ would
  be periodic of period diving $n'-n$, contradicting
  Lemma~\ref{l:unique-ball}. It follows that $\psi(L_j)$
  omits intervals of $L_1$ and, therefore, $L_j < L_1$.
\end{proof}

The dynamical space derivate at $\infty$
is controlled by the diameters of $X_j$:

\begin{lemma}
  \label{c:zder}
    Assume that $\beta_0$ is a parameter in an order $q$ limb such
    that $\upomega$ is periodic of  period $p$ under $\varphi_{\beta_0}$ and
    $\{X_0,\dots, X_{q'-1}\}$ is its cycle of periodic closed balls such that $\upomega \in X_0$. Then, for all $j \le p$:
    $$\nor{\partial_z \varphi_{\beta_0}^{\circ j-1}(\infty)} = \dfrac{\diam(X_j)}{\diam(X_1)}.$$   
  \end{lemma}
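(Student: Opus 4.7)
The plan is to apply the chain rule
\[
\|\partial_z \varphi_{\beta_0}^{\circ j-1}(\infty)\| \,=\, \prod_{i=1}^{j-1} \|\partial_z \varphi_{\beta_0}(\upomega_i)\|
\]
and reduce the formula to a telescoping identity: $\|\partial_z \varphi_{\beta_0}(\upomega_i)\| = \diam(X_{i+1})/\diam(X_i)$ for each $1 \le i \le p-1$. The proof then splits according to whether $X_i \neq X_0$ or $X_i = X_0$.

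First suppose $X_i \neq X_0$, equivalently $q' \nmid i$. Then $X_i$ is a rational closed ball properly contained in $B_{k(i)}$ for some $k(i) \neq 0$, and Proposition~\ref{p:level0}(2) shows that $\varphi_{\beta_0}$ maps $B_{k(i)}$ bijectively onto $B_{k(i)+1}$. Consequently the restriction $\varphi_{\beta_0}:X_i\to X_{i+1}$ is a bijection of rational closed balls, and Schwarz's lemma yields $\|\partial_z \varphi_{\beta_0}(z)\| = \diam(X_{i+1})/\diam(X_i)$ uniformly on $X_i$, in particular at $\upomega_i$.

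The substantive case is $X_i = X_0$, i.e., $q'\mid i$ with $1 \le i < p$, so that $\upomega_i \in X_0 \setminus \{\upomega\}$. By Lemma~\ref{l:size-derivatives}, $\|\partial_z \varphi_{\beta_0}(\upomega_i)\| = |\upomega_i|\cdot|t|^{\mp 1}$, with the sign determined by $q=2$ versus $q \ge 3$. A direct computation of the dominant term of $\varphi_\beta(z) = 1 + \beta/z + t^{\pm 1}/z^2$ on the outer sphere $\{|z| = r_0\}$ of $X_0 = \cball{r_0}{0}$ yields $\diam(X_1) = r_0^2 \cdot |t|^{\mp 1}$, so the desired equality
\[
\|\partial_z \varphi_{\beta_0}(\upomega_i)\| \,=\, \frac{\diam(X_1)}{\diam(X_0)} \,=\, r_0\cdot|t|^{\mp 1}
\]
reduces to the sub-claim that $|\upomega_i| = r_0$; equivalently, $\upomega_i$ lies in the outer sphere $X_0 \setminus D$ rather than in the maximal open sub-ball $D \subsetneq X_0$ containing $\upomega$.

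To establish the sub-claim I would normalize by $r_0$: the first-return map $\varphi_{\beta_0}^{\circ q'}: X_0 \to X_0$ becomes a degree-$2$ analytic self-map of $\{|u| \le 1\}$ whose reduction is a complex quadratic polynomial $P$ with critical point at $0$. Argue by contradiction: if $|\upomega_{q'k}| < r_0$ for some $1 \le k < p/q'$, then $P^{\circ k}(0) = 0$, yielding a super-attracting cycle of period dividing $k$ under $P$. Lifting this back to $\L$ produces a periodic rational closed ball $X_0' \subsetneq X_0$ containing $\upomega$, contradicting the uniqueness in Lemma~\ref{l:unique-ball}. The hard part is exactly this renormalization step---lifting the super-attracting cycle of the reduction to a smaller periodic closed ball---which requires a careful analysis analogous to Proposition~\ref{p:x} but applied at a finer scale.
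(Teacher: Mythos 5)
Your proof takes a genuinely different route from the paper's. You factor $\partial_z\varphi^{\circ j-1}_{\beta_0}(\infty)$ via the chain rule and try to establish $\|\partial_z\varphi_{\beta_0}(\upomega_i)\| = \diam(X_{i+1})/\diam(X_i)$ term by term, whereas the paper applies Schwarz's Lemma just once, to the map $\varphi_{\beta_0}^{\circ j-1}\colon D_1 \to D_j$, where $D_1$ is the maximal open sub-ball of $X_1$ containing $\infty$ and $D_j=\varphi^{\circ j-1}(D_1)$ is the maximal open sub-ball of $X_j$ containing $\upomega_j$. The paper's entire content is the observation that $D_1,\dots,D_{p-1}$ contain no critical point (because every element of $D_1$ lies in the basin of the super-attracting critical cycle, by Schwarz Lemma applied to the periodic ball $D'\subset X_0$; this rules out $\upomega'$, and $\upomega\in D_i$ for $0<i<p$ would force $D'$ to have period $<p$), so $\varphi^{\circ j-1}\colon D_1\to D_j$ is a bijection of open balls of diameters $\diam(X_1)$ and $\diam(X_j)$, and Schwarz's Lemma finishes. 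This is shorter and avoids the case split entirely.

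There are two genuine gaps in your version. First, your Case 1 contains an error: the assertion that $X_i\subsetneq B_{k(i)}$ with $k(i)\neq 0$ whenever $q'\nmid i$ is false. Since $q\mid q'$ and $q'>q$ (Proposition~\ref{p:x}), there are indices $i$ with $q\mid i$ but $q'\nmid i$ (e.g.\ $i=q$), for which $X_i\subsetneq B_0$ and $X_i\neq X_0$. In that subcase Proposition~\ref{p:level0}(2) gives nothing, since $\varphi_\beta\colon B_0\setminus D_0\to B_1$ has degree~2; moreover $X_i$ may sit inside $D_0$ (indeed $X_{q\ell_0}\subset C_k\subset D_0$), which requires Corollary~\ref{c:action-balls}(2) rather than Proposition~\ref{p:level0}. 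The correct argument is simply that $X_i$ for $q'\nmid i$ contains neither critical point ($\upomega\in X_0$ only; $\upomega'\in D_0'$, a maximal open ball of $B_0$, hence too large to be contained in any $X_i$ and disjoint from the critical orbit), so $\varphi_{\beta_0}\colon X_i\to X_{i+1}$ is a bijection and Schwarz applies directly. Second, in your Case 2 the ``lifting back to $\L$'' step that you flag as the hard part is left entirely to the reader, and as stated it is not even the most economical route: it is cleaner to observe that the reduction $P$ of the first-return map $M_0^{-1}\circ\varphi_{\beta_0}^{\circ q'}\circ M_0$ has $0$ periodic of exact period $p/q'$ (this is precisely the Schwarz Lemma argument that appears later in the proof of Proposition~\ref{p:parameter-poly}(3), and is independent of the present lemma), so that $P^{\circ k}(0)\neq 0$ for $0<k<p/q'$, which gives $|\upomega_{q'k}|=r_0$ directly without constructing a new periodic ball. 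As it stands, the crucial sub-claim in Case 2 is unproven, and the proof is incomplete.
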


  \begin{proof}
    This is an application of the Schwarz Lemma (SL).
    Indeed, since $\varphi_{\beta_0}^{\circ j-1}$ maps $X_1$ onto $X_j$, it maps
    the open ball $D_1$ of diameter $\diam(X_1)$ containing $\infty$ onto
    the open ball $D_j$ of diameter $\diam(X_j)$ containing $\upomega_{j}(\beta_0)$.
    Every element of $D_1$ is in the basin of
    attraction of the periodic critical orbit (SL). Therefore, $D_1, \dots, D_{p-1}$
    cannot contain a critical point. Hence, $\varphi_{\beta_0}^{\circ j-1}:D_1\to D_j$ is a bijection for all $j \le p$ and the lemma follows from (SL).
\end{proof}

\subsection{Polynomial parameter rescaling}
\label{s:primitive}
Consider a parameter $\beta_0$ such that $\upomega$ has period $p$ and a non-central return. Recall that $X_0, \dots, X_{q'-1}$ denotes the periodic  closed balls under $\varphi_{\beta_0}$ where
$\upomega \in X_0$.
Our analysis will show that there exists a parameter space ball $\Lambda_0$,
around $\beta_0$,  such that $\{X_0, \dots, X_{q'-1}\}$
 is a periodic orbit of closed balls
 \emph{for all} $\beta \in \Lambda_0$.
We say that parameters $\beta$ with such an orbit of periodic closed balls,
have  \emph{a period $q'$ primitive renormalization}.

Before proving the existence of $\Lambda_0$,
we need to prove that \eqref{eq:nidentity} holds for $n=q'$:

\begin{lemma}
  \label{l:polynomial}
  Assume that $\beta_0$ is a parameter in an order $q$ limb such
  that $\upomega$ is periodic of  period $p$ under $\varphi_{\beta_0}$.
  Suppose that $\varphi_{\beta_0}$ has a primitive
  renormalization of period $q'$.
Let $\ell_0 \ge 1$ be such that $\upomega_{q \ell_0}(\beta_0)$ is the first non-central return of $\upomega$.
Then, for all $q \ell_0 + k \le q'$:
$$\|\partial_\beta \upomega_{q \ell_0 + k} ({\beta_0})\| =
\| \partial_z \varphi^{\circ k}_{\beta_0} (\upomega_{q\ell_0}) \| \cdot \val{\tau}.$$
Moreover,
  $$\| \partial_\beta \upomega_{q'} ({\beta_0})\| = \|\partial_z \varphi^{\circ q'-1}_{\beta_0}(\infty)\| \cdot \val{\tau}^2.$$ 
\end{lemma}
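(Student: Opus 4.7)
The plan is to establish the first identity by induction on $k$ and then derive the second as the case $k = q' - q\ell_0$ combined with the chain rule. For the base case $k = 1$, I would factor $\partial_z \varphi^{\circ q\ell_0}_{\beta_0}(\infty) = \partial_z \varphi_{\beta_0}(\upomega_{q\ell_0}) \cdot \partial_z \varphi^{\circ q\ell_0 - 1}_{\beta_0}(\infty)$ via the chain rule and substitute $\nor{\partial_z \varphi^{\circ q\ell_0 - 1}_{\beta_0}(\infty)} = \val{\tau}^{-1}$ from the first line of Lemma~\ref{l:satellite}(2) into its second line to recover $\nor{\partial_\beta \upomega_{q\ell_0 + 1}} = \nor{\partial_z \varphi_{\beta_0}(\upomega_{q\ell_0})} \cdot \val{\tau}$.

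For the inductive step $k \mapsto k+1$ (with $q\ell_0 + k + 1 \leq q'$), I would apply the chain rule for parameter derivatives to $\upomega_{q\ell_0 + k + 1} = \varphi_\beta(\upomega_{q\ell_0+k})$, obtaining
\[
\partial_\beta \upomega_{q\ell_0 + k + 1} \;=\; (\partial_\beta \varphi_\beta)(\upomega_{q\ell_0+k}) \;+\; \partial_z \varphi_{\beta_0}(\upomega_{q\ell_0+k}) \cdot \partial_\beta \upomega_{q\ell_0 + k}.
\]
The strategy is to show the second summand strictly dominates the first in spherical norm. Once dominance is established, the strong triangle inequality gives $\nor{\partial_\beta \upomega_{q\ell_0 + k + 1}} = \nor{\partial_z \varphi_{\beta_0}(\upomega_{q\ell_0+k})} \cdot \nor{\partial_\beta \upomega_{q\ell_0 + k}}$, and the inductive hypothesis combined with $\nor{\partial_z \varphi^{\circ k+1}_{\beta_0}(\upomega_{q\ell_0})} = \nor{\partial_z \varphi_{\beta_0}(\upomega_{q\ell_0+k})} \cdot \nor{\partial_z \varphi^{\circ k}_{\beta_0}(\upomega_{q\ell_0})}$ (chain rule) delivers the formula at step $k+1$.

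To verify the dominance, I would read off $\nor{(\partial_\beta \varphi_\beta)(\upomega_{q\ell_0+k})}$ from Lemma~\ref{l:size-derivatives} according to which of $B_0'$, $B_{1/2}$, $B_1$, or $B_i$ with $i\ge 2$ contains $\upomega_{q\ell_0+k}$, and compare it to the right-hand side rewritten, via Lemma~\ref{c:zder}, as $(\diam(X_{q\ell_0 + k+1})/\diam(X_{q\ell_0})) \cdot \val{\tau}$. The crucial input is the \emph{strict} inequality $L_{q\ell_0 + k + 1} < L_{q\ell_0} = L_1$ supplied by Lemma~\ref{l:smallest-ball} for every $k$ in the relevant range; this strictness forces the product to exceed the $\partial_\beta \varphi$ contribution with room to spare, excluding any cancellation. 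The second identity is then immediate: taking $k = q' - q\ell_0$ in the first identity and using the factorization $\partial_z \varphi^{\circ q' - 1}_{\beta_0}(\infty) = \partial_z \varphi^{\circ q' - q\ell_0}_{\beta_0}(\upomega_{q\ell_0}) \cdot \partial_z \varphi^{\circ q\ell_0 - 1}_{\beta_0}(\infty)$ together with $\nor{\partial_z \varphi^{\circ q\ell_0 - 1}_{\beta_0}(\infty)} = \val{\tau}^{-1}$ converts it into $\nor{\partial_\beta \upomega_{q'}} = \nor{\partial_z \varphi^{\circ q' - 1}_{\beta_0}(\infty)} \cdot \val{\tau}^2$. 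The main obstacle I expect is this dominance verification: it requires a careful case-by-case comparison of spherical sizes sharp enough to exploit the strict gap from Lemma~\ref{l:smallest-ball}, and borderline situations (when $\upomega_{q\ell_0+k}$ sits near $D_0$ or $D_0'$) may force one to refine the symbolic estimates of Lemma~\ref{l:size-derivatives} by appealing to Puiseux expansions in the spirit of the proof of Lemma~\ref{l:satellite}.
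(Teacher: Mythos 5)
Your proof takes essentially the same route as the paper: the base case $k=1$ is read off from Lemma~\ref{l:satellite}(2), the inductive step compares $\|\partial_\beta\varphi_\beta(\upomega_{q\ell_0+k})\|$ against $\|\partial_z\varphi_{\beta_0}(\upomega_{q\ell_0+k})\|\cdot\|\partial_\beta\upomega_{q\ell_0+k}\|$ via the size estimates of Lemma~\ref{l:size-derivatives}, the strict gap $L_{q\ell_0+k}<L_1$ from Lemma~\ref{l:smallest-ball} (through Schwarz/Lemma~\ref{c:zder}), and the strong triangle inequality, and the second identity follows from the first using $\|\partial_z\varphi^{\circ q\ell_0-1}_{\beta_0}(\infty)\|=\val{\tau}^{-1}$. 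The only bookkeeping wrinkle is the base case: a literal substitution of the first line of Lemma~\ref{l:satellite}(2) into its second yields $\|\partial_\beta\upomega_{q\ell_0+1}\|=\|\partial_z\varphi_{\beta_0}(\upomega_{q\ell_0})\|$ rather than the claimed $\cdot\val{\tau}$, so to recover the $k=1$ identity for $q=2$ one should rely on the values $\|\partial_\beta\upomega_{q\ell_0+1}\|=1$ and $\|\partial_z\varphi_{\beta_0}(\upomega_{q\ell_0})\|=\val{t}^{-1}$ established inside the proof of Lemma~\ref{l:satellite} rather than on the literal display in its statement.
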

\begin{proof}
  From Lemma~\ref{l:satellite}, recall that $ \|\partial_z \varphi_\beta^{\circ q \ell_0-1} (\infty)\| = \val{\tau}^{-1}$. Thus, the second formula follows from
  the first.
  To prove the first formula we proceed by induction. 
  In view of Lemma~\ref{l:satellite}, the case $k=1$ is already proven.

  For $ q \ge 3$, all the balls $B_0, \dots, B_{q-1}$ have
  the same diameter.  Hence, from Lemma~\ref{l:smallest-ball}, we conclude that
  $\diam(X_{q\ell_0}) < \diam(X_{q\ell_0 +k}).$
  For $q=2$, the same holds unless $X_{q\ell_0+k} \subset B_1$; in this case,
  $\val{t} \cdot
  \diam(X_{q\ell_0}) < \diam(X_{q\ell_0 +k}).$
For all $k\le q'-q\ell_0$, since $\varphi^{\circ k}_{\beta_0}: X_{q\ell_0} \to
  X_{q \ell_0 +k}$ is a bijection,
  Schwarz Lemma yields:
\begin{equation}
  \label{eq:expansion}
  \| \partial_z \varphi_{\beta_0}^{\circ k} (\upomega_{q\ell_0}(\beta_0)) \| >
  \begin{cases} \val{t},  \text{ if $q=2$ and $X_{q\ell_0+k} \subset B_1$,}\\
    1, \text{ otherwise.}
  \end{cases}
  % 1 < \dfrac{\diam(B_0)}{\diam(B_j)} \cdot  \dfrac{\diam(X_{q\ell_0+k})}{\diam(X_{q\ell_0})}= \dfrac{\diam(B_0)}{\diam(B_j)} \| \partial_z \varphi_{\beta_0}^{\circ k} (\upomega_{q\ell_0}) \|  
\end{equation}
Let $z =: \upomega_{q \ell_0 +k} (\beta_0)$.
If $q \ge 3$, or $q=2$ and $X_{q\ell_0+k} \subset B_0$, then
\begin{eqnarray*}
  \|\partial_{\beta} \varphi_{\beta} (z) \|& \le&  \| \partial_z
\varphi_{\beta_0} (z) \| \cdot \val{\tau}\\
&<& \| \partial_z \varphi_{\beta_0} (z) \|  \cdot  \| \partial_z \varphi_{\beta_0}^{\circ k} (\upomega_{q\ell_0})\| \cdot
    \val{\tau}
\end{eqnarray*}    
The first inequality can be checked case by case applying Lemma~\ref{l:size-derivatives}. The second is a consequence of \eqref{eq:expansion}.

Similarly, 
if $q=2$ and $X_{q\ell_0+k} \subset B_1$, then
\begin{eqnarray*}
  \|\partial_{\beta} \varphi_{\beta} (z) \|& \le&  \| \partial_z
\varphi_{\beta_0} (z) \| \cdot \val{t}^2 \\
&<& \| \partial_z \varphi_{\beta_0} (z) \|  \cdot  \| \partial_z \varphi_{\beta_0}^{\circ k} (\upomega_{q\ell_0})\| \cdot
    \val{\tau}
\end{eqnarray*}

In both cases,  the inductive hypothesis yields 
\begin{eqnarray*}
  \|\partial_{\beta} \varphi_{\beta} (z) \|
  &<& \| \partial_z \varphi_{\beta_0} (z) \|  \cdot \|\partial_\beta \upomega_{q \ell_0 + k} ({\beta_0})\|.
  \end{eqnarray*}

  By the chain rule, the strong triangle inequality, and the inductive hypothesis
  we have:
\begin{eqnarray*}
  \|\partial_\beta \upomega_{q \ell_0 +k+ 1} (\beta_0)\| &=& \|\partial_\beta \varphi_\beta (z) + \partial_z \varphi_\beta(z) \partial_\beta \upomega_{q \ell_0+k} ({\beta}_0)\| \\
                                                           &=& \| \partial_z \varphi_\beta(z) \partial_\beta \upomega_{q \ell_0+k} ({\beta}_0)\|\\
   &=& \|\partial_z \varphi_{\beta_0}^{\circ k+1} (\upomega_{q\ell_0}(\beta_0))\| \cdot \val{\tau} 
  \end{eqnarray*}

\end{proof}

\begin{proposition}
  \label{p:parameter-poly}
  Let $\beta_0$ be such that $\upomega$ has period $p$ under $\varphi_{\beta_0}$.
  Suppose that 
  $\{X_0, \dots, X_{q'-1}\}$ is a periodic cycle of
  (rational) closed balls where $\upomega \in X_0$. Consider
  $$\Lambda_0 := \{ \beta \in \L : \val{\beta - \beta_0} \le \diam({X}_1) \cdot |\tau|^{-2}\}.$$
  Then all of the following statements hold:
  \begin{enumerate}
  \item For all $\beta \in \Lambda_0$, we have $\varphi_\beta (X_j)=X_{j+1}$, subscripts modulo $q'$.
  \item The map $\upomega_{q'}: \Lambda_0 \to X_0$ is 
    a bijection.
  \item For $j =0,1$, let $\rho_j\in \QS$ be such that
    $\diam (X_j) =\val{t}^{\rho_j}$.
    Consider:
    \begin{eqnarray*}
      M_0(z) &:=& t^{\rho_0} z,\\
      \mu_0 (\gamma) &:=& \beta_0 + \tau^{-2} t^{\rho_1} \gamma \text{ for } \gamma \in \mathfrak{O}.
    \end{eqnarray*}
    Then,   there exist  $A,B,C \in \C$
  with $A,B \neq 0$, such that
    $$\rd( M_0^{-1} \circ  \varphi^{\circ q'}_{\mu_0 (\gamma)} \circ  M_0(z)) = A \tilde{z}^2 + B \tgamma + C,$$
  for all $z \in X_0$ and $\gamma \in \mathfrak{O}$. Moreover, $\omega=0$ has period $p/q'$ under
  $\tilde{z} \mapsto A \tilde{z}^2 + C$.

  %   For $|\gamma| \le 1$, let $$\mu_0 (\gamma) := \beta_0 + \tau^{-2} t^{\log \hat{r}_1}  \gamma.$$
  %   Then
  %     $$
  % 
  % $$
  % is well defined and an isomorphism. 
  \end{enumerate}
\end{proposition}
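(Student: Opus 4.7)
The plan is to first establish the dynamical persistence of the cycle $X_0, \dots, X_{q'-1}$ throughout $\Lambda_0$ (parts (1) and (2)) via a Mean Value estimate, and then deduce the rescaled family description in (3) as a formal consequence.

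For part (1) I would argue inductively on $j$ that for every $\beta \in \Lambda_0$, $\varphi_\beta(X_j) = X_{j+1}$. The Mean Value Theorem gives $|\varphi_\beta(z) - \varphi_{\beta_0}(z)| \le \|\partial_\beta \varphi_\beta\|_{X_j} \cdot \diam(\Lambda_0)$ for $z \in X_j$, and one checks, case-by-case via Lemma~\ref{l:size-derivatives} combined with the diameter estimates of Lemma~\ref{l:smallest-ball} and Lemma~\ref{l:polynomial}, that the right-hand side is bounded by $\diam(X_{j+1})$, so that $\varphi_\beta(X_j) \subseteq X_{j+1}$. Since the same perturbation estimate keeps the second critical point $\upomega'(\beta)$ inside $D_0'$ (in particular outside $X_0$), the degree of $\varphi_\beta|_{X_j}$ matches that of $\varphi_{\beta_0}|_{X_j}$, upgrading the inclusion to equality. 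For part (2), once (1) is in place, $\upomega_{q'} : \Lambda_0 \to X_0$ is well-defined and analytic; Lemma~\ref{l:polynomial} together with Lemma~\ref{c:zder} gives $\|\partial_\beta \upomega_{q'}(\beta_0)\| = |\tau|^2 \diam(X_0)/\diam(X_1)$, and multiplying by $\diam(\Lambda_0) = \diam(X_1) |\tau|^{-2}$ produces a closed ball of diameter $\diam(X_0)$; since it contains $\upomega_{q'}(\beta_0) \in X_0$, it must equal $X_0$. To promote this to a bijection I would verify that $\|\partial_\beta \upomega_{q'}\|$ is constant on $\Lambda_0$ (the chain-rule expansion has the same dominant term for every $\beta$, because the orbits $\upomega_j(\beta)$ stay in the same balls $X_j$ by (1)), and then apply the Schwarz Lemma equality case.

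For part (3), consider the two-variable map
\[
G(\gamma, z) := M_0^{-1}\bigl(\varphi^{\circ q'}_{\mu_0(\gamma)}(M_0(z))\bigr),
\]
which by (1) and (2) is an analytic map $\mathfrak{O} \times \mathfrak{O} \to \mathfrak{O}$. Its reduction $\rd G$ is a polynomial in $(\tgamma, \tilde z)$, of degree $2$ in $\tilde z$ (inherited from $\deg(\varphi_{\beta_0}^{\circ q'}|_{X_0}) = 2$, Proposition~\ref{p:x}) with critical point at $\tilde z = 0$ (since $\upomega = 0$ is the only critical point of $\varphi_\beta$ in $X_0$ for $\beta \in \Lambda_0$), and of degree $1$ in $\tgamma$ (any analytic bijection $\mathfrak{O} \to \mathfrak{O}$ reduces to an affine automorphism of $\C$, applied here to the bijection $M_0^{-1}\circ \upomega_{q'}\circ \mu_0$ of (2)). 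Hence $\rd G(\tgamma, \tilde z) = A\tilde z^2 + B\tgamma + C$ with $A, B \neq 0$.

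The main obstacle is the final claim that $\omega = 0$ has period \emph{exactly} $p/q'$ under $\tilde z \mapsto A\tilde z^2 + C$. The semiconjugacy immediately forces the period to divide $p/q'$; ruling out a proper divisor amounts to showing that the $p/q'$ iterates $\upomega, \upomega_{q'}, \dots, \upomega_{(p/q'-1)q'}$ have pairwise distinct reductions in $\C$, equivalently that no two of them lie in the same maximal open sub-ball of $X_0$. A coincidence would produce a periodic open sub-ball of $X_0$ strictly smaller than $X_0$ that contains $\upomega$, giving rise to a periodic closed ball strictly smaller than $X_0$ and contradicting the uniqueness statement of Lemma~\ref{l:unique-ball}, together with the structural description of quadratic dynamics over $\L$ in~\cite{KiwiPuiseuxQuadratic}.
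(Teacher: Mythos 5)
Your overall architecture mirrors the paper's: a Mean Value estimate to show $\varphi_\beta(X_j)\subseteq X_{j+1}$ for $\beta\in\Lambda_0$, then the Schwarz Lemma equality case at $\beta_0$ (via Lemmas~\ref{l:polynomial} and~\ref{c:zder}) for part~(2), and reduction of the conjugated return map for part~(3). Parts~(2) and~(3) are essentially the paper's argument (the extra step where you propose to verify constancy of $\|\partial_\beta\upomega_{q'}\|$ on $\Lambda_0$ is unnecessary, since the paper's statement of the Schwarz Lemma already says that equality at a single point forces the map to be a bijection).

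The genuine gap is in your upgrade of the inclusion $\varphi_\beta(X_j)\subseteq X_{j+1}$ to equality in part~(1). You argue that because the second critical point stays out of $X_0$, the degree of $\varphi_\beta|_{X_j}$ matches that of $\varphi_{\beta_0}|_{X_j}$, and that this ``upgrades the inclusion to equality.'' That implication does not hold: for a non-Archimedean analytic map the degree of $\varphi_\beta\colon X_j\to\varphi_\beta(X_j)$ (namely $1+\#\{\text{critical points in }X_j\}$) says nothing about whether $\varphi_\beta(X_j)$ fills up $X_{j+1}$ or is a strictly smaller sub-ball. To repair this one must actually control diameters: for $j\neq 0$ a diameter argument via $\|\partial_z\varphi_\beta\|$ being constant on $X_j$ does work, but for the critical ball $X_0$ (degree~$2$) there is no immediate diameter formula and the argument does not go through as stated. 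The paper avoids this entirely and proceeds by contradiction: a proper inclusion $\varphi_\beta(X_j)\subsetneq X_{j+1}$ forces the periodic closed ball $X_0'$ of $\varphi_\beta$ guaranteed by Proposition~\ref{p:x}/\cite[Prop.~5.3]{KiwiPuiseuxQuadratic} to satisfy $X_0'\supsetneq X_0$; applying the preparatory lemma to the larger parameter ball $\Lambda_0'$ associated with $X_0'$ (which contains $\beta_0$) yields $\varphi_{\beta_0}^{\circ q'}(X_0')\subseteq X_0'$, and then the Schwarz Lemma contradicts periodicity of $X_0$ for $\varphi_{\beta_0}$. This structural contradiction is a real ingredient that your proposal lacks.

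A minor wrinkle in your final period argument: the maximal open sub-ball $D'$ of $X_0$ has the \emph{same} diameter as $X_0$, so a coincidence of reductions produces a periodic \emph{open} ball with $\diam(D')=\diam(X_0)<\diam(B_0)$, which contradicts Lemma~\ref{l:unique-ball} directly (that lemma forbids a periodic open ball of diameter different from $\diam(B_0)$); it does not ``give rise to a periodic closed ball strictly smaller than $X_0$.'' The paper instead argues that the open ball $D'$ would be periodic of period $q'p'\mid p$ and that the Schwarz Lemma forces $q'p'=p$. Both routes reach the conclusion, but your phrasing misstates the mechanism.
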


\begin{remark}
  In Berkovich space language, Proposition~\ref{p:parameter-poly} (3) says
  that the following map is an affine isomorphism:
  $$\begin{array}{ccc}
     \C& \to & \cS_1\\
     \tgamma& \mapsto & T_{\zeta_0} \varphi^{q'}_{\mu_0(\gamma)}
  \end{array}$$
  where $\zeta_0 \in \poneberk$
  is the $\sup$-norm in $X_0$; equivalently $\{\zeta_0\} = \partial \overline{X}_0$.
\end{remark}

% \begin{lemma}
%   Consider $\beta_0$ and $\Lambda_0$ as in the Proposition. 
%   Let $M_0(z) = t^{\log \hat{r}_0} z$.
%   Then, $$\rd( M_0 \circ  \varphi^{\circ q'}_{\mu_0 (\gamma)} \circ  M_0^{-1}) = A \tilde{z}^2 + B_0 \tgamma + C$$
%   for some $A,B_0,C \in \C$
%   with $A,B_0 \neq 0$.
% \end{lemma}

% \begin{corollary}
%   Consider $\beta_0$ and $\Lambda_0$ as in the Proposition. Let $\zeta_0$ be the boundary of $X_0$.
% Let $\beta(\gamma) = \beta_0 + t^{-2+\log \hat{r}_1} \gamma$ for $|\gamma| \le 1$. 
%   Then:
%   $$
%   \begin{array}{ccc}
%     \C& \to & \cS_1\\
%     \tgamma& \mapsto & T_{\zeta_0} \varphi^{q'}_{\beta(\gamma)}
%   \end{array}
%   $$
%   is an isomorphism. 
% \end{corollary}

Before proving Proposition~\ref{p:parameter-poly} we need the following:

\begin{lemma}
    Suppose that 
  $\{X_0, \dots, X_{q'-1}\}$ is a periodic cycle of
  (rational) closed balls under iterations
  of $\varphi_{\beta'}$, for some $\beta'$ in a limb with $\upomega \in X_0$.
  Consider
  $$\Lambda_0 := \{ \beta \in \L : \val{\beta  - \beta'} \le \diam({X}_1) \cdot |\tau|^{-2}\}.$$
  Then, for all $\beta \in \Lambda_0$, we have $\varphi_{\beta} (X_j)\subset X_{j+1}$ for all $j$.
\end{lemma}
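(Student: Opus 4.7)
My plan is, for fixed $\beta \in \Lambda_0$ and $z \in X_j$, to bound $\dist_s(\varphi_\beta(z), \varphi_{\beta'}(z))$ by $\diam(X_{j+1})$. Since $\varphi_{\beta'}(z) \in X_{j+1}$ and $X_{j+1}$ is a closed ultrametric ball, this will force $\varphi_\beta(z) \in X_{j+1}$. The essential observation is that $\varphi_\beta$ is affine in $\beta$: explicitly,
\begin{equation*}
\varphi_\beta(z) - \varphi_{\beta'}(z) = (\beta - \beta')/z
\end{equation*}
in affine coordinates, so that passing to the spherical metric on the target recovers the formulas for $\|\partial_\beta \varphi_{\beta'}(z)\|$ tabulated in Lemma~\ref{l:size-derivatives}. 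The hypothesis $|\beta - \beta'| \le \diam(X_1) \cdot |\tau|^{-2}$ combined with the lower bound $\diam(X_{j+1})/\diam(B_{k_{j+1}}) \ge \diam(X_1)/\diam(B_{k_1})$ from Lemma~\ref{l:smallest-ball} (with equality when $j+1 \le q\ell_0$) then reduces the lemma to the pointwise inequality $\|\partial_\beta \varphi_{\beta'}(z)\| \cdot |\beta - \beta'| \le \diam(X_{j+1})$, which I verify by cases on $k$ such that $X_j \subset B_k$.

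If $X_j \subset B_k$ for some $k \ne 0$, then $X_j$ contains no critical point (the second critical point $\upomega'$ lies in $D_0'$ and has infinite forward orbit, so is absent from $\cK(\varphi_{\beta'}) \supset X_j$), so $\varphi_{\beta'} \colon X_j \to X_{j+1}$ is a bijection by Proposition~\ref{p:level0}; the Schwarz Lemma equality $\diam(X_{j+1}) = \|\partial_z \varphi_{\beta'}(z)\| \cdot \diam(X_j)$ together with the ratios $\|\partial_\beta\varphi\|/\|\partial_z\varphi\|$ read from Lemma~\ref{l:size-derivatives} yields the required bound, after separating the sub-cases $q=2$ and $q\ge 3$ (which differ because $|\tau|$ and the $\diam(B_k)$'s differ). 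The critical case $X_j = X_0 \ni \upomega = 0$ requires more care: for $z = 0$ the inclusion $\varphi_\beta(0) = \infty \in X_1$ is trivial, while for $z \ne 0$ I work in the chart $u = 1/\varphi$ at $\infty$ and compute
\begin{equation*}
\frac{1}{\varphi_\beta(z)} - \frac{1}{\varphi_{\beta'}(z)} = \frac{z^3(\beta' - \beta)}{(z^2 + \beta z + c)(z^2 + \beta' z + c)},
\end{equation*}
where $c = t$ for $q \ge 3$ and $c = 1/t$ for $q = 2$. On $X_0$ the denominator is dominated by $|c|^2$, and combining this with the degree-two Schwarz identity $\diam(X_0)^2 = |c| \cdot \diam(X_1)$ yields $\dist_s \le \diam(X_1)$ from $\diam(X_0) \le 1$ and $\diam(X_1) \le \diam(B_1)$.

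The main obstacle is the careful bookkeeping across $q=2$ and $q \ge 3$: the spherical-to-affine conversion, the differing spherical diameters $\diam(B_k)$, and the interaction between the $|\tau|^{-2}$ in the radius of $\Lambda_0$ and the $|c|$ in the degree-two Schwarz equation for the critical ball. The exponents in the definition of $\Lambda_0$ are chosen precisely so that all estimates are sharp; in particular the radius bound on $\Lambda_0$ is saturated exactly by the critical case $X_0$ via the degree-two Schwarz Lemma.
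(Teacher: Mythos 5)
Your strategy is the same as the paper's: bound $\dist_s(\varphi_\beta(z),\varphi_{\beta'}(z))$ above by $\diam(X_{j+1})$ and use that $X_{j+1}$ is a closed ultrametric ball, with the necessary size estimates coming from Lemma~\ref{l:size-derivatives} and Lemma~\ref{l:smallest-ball}. The explicit affine formula $\varphi_\beta - \varphi_{\beta'} = (\beta-\beta')/z$ (rather than a black-box Mean Value Theorem) and the Schwarz-with-degree-two identity $\diam(X_0)^2 = |c|\,\diam(X_1)$ for the critical ball are nice repackagings, and the computations you carry out in the cases you treat are correct.

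However, your case split is incomplete. You argue for ``$X_j\subset B_k$ with $k\neq 0$'' and for the critical ball $X_0$, but this omits the balls $X_j\subset B_0$ with $j\neq 0$, which occur whenever $j$ is a multiple of $q$. These come in two flavors. For $X_j\subset B_0\setminus D_0$ (the central returns before the first non-central return), your $1/\varphi$-chart formula actually does extend: the denominator is still $|c|^2$ and the same estimates go through, so this is merely an oversight. But for $X_j\subset B_{1/2}\subset D_0$ (the first non-central return, $j=q\ell_0$) your argument genuinely breaks: the claim ``the denominator is dominated by $|c|^2$'' fails, since for $z\in D_0$ the ultrametric dominance is broken (indeed $|z^2+\beta z+c| = |z|^2|\varphi_\beta(z)|$ and $\varphi_\beta(z)$ is now far from $\infty$), and the chart $u=1/\varphi$ is no longer the right one because $\varphi_\beta(X_j)$ is not contained in $B_1$. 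The paper treats this as a separate fourth case using the $B_{1/2}$ rows of Lemma~\ref{l:size-derivatives} together with the $|\tau|^{-1}$ improvement from Lemma~\ref{l:smallest-ball} for $X_j\subset B_0$; your ratio-plus-Schwarz method would also work there if you actually applied it with the $B_{1/2}$ entries of the table, but as written that case is unaddressed.
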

\begin{proof}
   Consider $\beta \in \Lambda_0$. Our proof is a case by case
  analysis according to the position of $X_j$.
From Lemma~\ref{l:smallest-ball}, $$\diam({X}_j) \ge
  \begin{cases}
    \diam({X}_1) \cdot |\tau|^{-1}, & \text{ if } X_j \subset B_0, \\
    \diam({X}_1), & \text{ otherwise. }
  \end{cases}
  $$
 
  Suppose $X_j \subset B_0' \setminus D_0$. Then $X_{j+1} \subset B_1$.
  By \eqref{eq:bder2} and \eqref{eq:bderq},  $\|\partial_\beta \varphi_\beta (z)\|  \le |\tau|^2$ for all $z \in B'_0$.
  Thus, by the Mean Value Theorem, $$\dist_s(\varphi_{\beta_0}(z),\varphi_{\beta} (z)) \le \diam(X_1) \le \diam(X_{j+1}),$$
  for all $\beta \in \Lambda_0$ and $z \in X_j$. That is, $\varphi_\beta(X_j)
  \subset X_{j+1}$.

  % Moreover, directions not containing $0$ map injectively
  % onto directions not containing $\infty$ with spherical derivative independent of $\beta \in \hat\Lambda$, so $\varphi_\beta (X_j) = X_1$.

  % If $X_j \subset B'_0$, then for all $z \in X_j$ and all $\beta \in \hat\Lambda$,
  % $\|\partial_\beta \varphi_\beta (z) \| \le |\tau|^2$. Therefore,  spherical distance
  % between $\varphi_\beta(z)$ and $\varphi_{\hat\beta} (z)$ is at most $\hat{r}_1$.
  % Hence, $\varphi_\beta(X_j)
  % \subset X_{j+1}$.

  Suppose $X_j \subset B_1$. Hence, $X_{j+1} \subset B_0$ if $q=2$ and
  $X_{j+1} \subset B_2$, otherwise. Moreover, by \eqref{eq:bder2} and \eqref{eq:bderq}, 
  if $z \in X_j$ and  $\beta \in \Lambda_0$, then
  $\|\partial_\beta \varphi_\beta (z) \| \le \val{t}$.  Therefore,
  $$\dist_s(\varphi_\beta(z),\varphi_{\beta_0} (z))\le \diam(X_1) \val{\tau}^{-2} \val{t} \le \diam(X_1) \val{\tau}^{-1} \le \diam(X_{j+1}).$$
  That is, $\varphi_\beta(X_j)
  \subset X_{j+1}$.

  Suppose $q \ge 3$ and $X_j \subset B_k$ for some $2 \le k <q$.
  If $z \in X_j$ and  $\beta \in \Lambda_0$, then
  $\|\partial_\beta \varphi_\beta (z) \| = 1$ and
  $$\dist_s(\varphi_\beta(z),\varphi_{\beta_0} (z))\le \diam(X_1) \le \diam(X_{j+1}).$$
  Thus, $\varphi_\beta (X_j) \subset X_{j+1}$.
  
  Finally, suppose that  $X_j \subset B_{1/2}$.
  By \eqref{eq:zder2}-\eqref{eq:bderq}, 
   $\|\partial_\beta \varphi_\beta (z) \| = 1$ and $\|\partial_z \varphi_{\beta_0} (z)\| =\val{t}^{-2} \cdot \val{\tau}$,
   for all $z \in X_j$ and all $\beta \in \Lambda_0$.
   Thus,
   $$\dist_s(\varphi_\beta(z),\varphi_{\beta_0} (z))\le \diam(X_1) \val{\tau}^{-2}  \le \diam(X_j) \val{\tau}^{-1},$$
   and
   $$\diam(X_j) \val{t}^{-2} \cdot \val{\tau} =
   \diam(X_{j+1}).$$
   Therefore, $\dist_s(\varphi_\beta(z),\varphi_{\beta_0} (z)) \le \diam(X_{j+1})$ and $\varphi_{\beta}(X_j) \subset X_{j+1}$.
 \end{proof}

 \begin{proof}[Proof of Proposition~\ref{p:parameter-poly}]
   Given $\beta \in \Lambda_0$, by the previous lemma, we have that
   $\varphi_\beta(X_j) \subset X_{j+1}$ for all $j$.
   It follows that $\upomega$ has a non-central return under $\varphi_\beta$.
 To prove that $\varphi_\beta(X_j) = X_{j+1}$, we proceed by contradiction.
   If $\varphi_\beta(X_j) \subsetneq X_{j+1}$ for some $j$, then
   there exist a closed ball $X_0' \supsetneq X_0$ which is periodic
   under $\varphi_\beta$, maybe of smaller
   period. 
   Let $X_j' := \varphi^{\circ j}_\beta(X'_0)$ and note
   that $\diam(X_1') > \diam(X_1)$. Therefore,
   $$\Lambda'_0 := \{ \beta' \in \L : \val{\beta' - \beta} \le \diam({X}'_1) \cdot |\tau|^{-2}\} \supset \Lambda_0.$$
   Again by the previous lemma, for all $\beta' \in \Lambda_0'$, we
   have that $\varphi_{\beta'}(X_j') \subset X'_{j+1}$. Since
   $\beta_0 \in \Lambda_0'$, it follows
   that $\varphi^{\circ q'}_{\beta_0}(X_0') \subset X_0'$. But $X_0$ is properly
   contained in $X_0'$. By Schwarz Lemma, $X_0$ is in the basin of $\upomega$ under $\varphi_{\beta_0}$, which contradicts the periodicity of $X_0$. Hence,
   (1) holds.
   
   For $0\le j < q'$, we have proven that  $\omega_{q'} : \Lambda_0 \to X_{0}$ is well defined.
   Observe that $\varphi^{\circ q'-1}_{\beta_0}: X_1 \to X_{q'}$ is a bijection. By Schwarz Lemma,
   $$\| \partial_z \varphi_{\beta_0}^{\circ q'-1}(\infty) \| = \dfrac{\diam(X_{q'})}{\diam(X_0)}.$$
Thus, Lemma~\ref{l:polynomial} yields:
$$\| {\partial_\beta \upomega_{q'}} (\beta_0)\| =
{\diam(X_{q'})}{\diam(X_1) |\tau|^{-2}}.$$
By Schwarz Lemma, again,
$\omega_{q'} : \Lambda_0 \to X_{0}$ is a bijection. That is (2) holds.

Taking into account that $\mu_0 (\gamma)$ is
a parametrization of $\Lambda_0$ by $\mathfrak{O}$, that
the map $\varphi_\beta^{\circ q'} : X_0 \to X_0$ has degree $2$, and
that $M_0 (\mathfrak{O})) = X_0$, for all $\tgamma \in \C$:
$$Q_\tgamma := \rd( M_0^{-1} \circ  \varphi^{\circ q'}_{\mu_0 (\gamma)} \circ  M_0)$$
is a quadratic polynomial. Moreover, its critical point is at $z=0$
and  its coefficients are polynomials in $\tgamma$. That is,
$$Q_\tgamma (z) = A (\tgamma) z^2 + V (\tgamma),$$
for some $A(\tgamma), V(\tgamma) \in \C[\tgamma]$.
Since $A(\tgamma)$ has no roots, it must be constant, say $A$.
The critical value $V(\tgamma)$ is $ \rd \omega_{q'} (\mu_0(\gamma))$,
which has degree $1$ by the already proven assertion (2).
That is, the first part of assertion (3) holds. If $\omega=0$ has period
$p'$ under $Q_\tgamma$. Then the open ball $D'$ of diameter
$\diam(X_0)$ containing $\upomega$ has period $q'p'|p$. By Schwarz Lemma
every element $D'$ is attracted to $\upomega$, hence $q'p'=p$, which finises
the prove of (3).
\end{proof}

  From the viewpoint of complex dynamics,
  Proposition~\ref{p:parameter-poly} proves the presence of a Mandelbrot torus
around $\cE^*_x$ in $\cM^{cm}_2$. This Mandelbrot torus is analogous to the ones obtained
by Branner-Hubbard~\cite{BrannerHubbardCubicI}, in the parameter space of cubic polynomials, close to infinity.

Let us be more concrete and consider a puncture $x$ of $\cS_p$
of multiplicity $\mu$ with associated Puiseux series $\beta_0$,
and $\diam(X_i)=\val{t}^{\rho_i}$ for $i=0,1$, as in the statement
of the proposition.  Consider the associated
parametrization $(s^\mu, b(s))$ of $\cE_x$.
Since $L_1=2 \cdot L_0$,  there exists $m_0 \ge 1$ such
that $\rho_0=n_0/m_0$ and $\rho_1=n_1/m_0$ for some $n_0,n_1\ge 1$.
Let
\begin{eqnarray*}
  a(s)&:=&s^{\mu m_0},\\
  b_c(s)&:=& b(s^{m_0})+s^{\mu n_1} c, \quad c \in \C\\
  M_s(z)&:=& \dfrac{s^{\mu n_0}}{A} z.
\end{eqnarray*}
From Proposition~\ref{p:parameter-poly} (3) it follows that, uniformly in compact subsets of $\C$, as $s \to 0$:
$$g_{s,c}(z):=M_s^{-1} \circ f_{a(s),b_c(s)} \circ M_s (z) \to z^2 + AB c + C.$$
We extend the family $g_{s,c}$ to $s=0$ by declaring $g_{0,c}(z):= z^2 + AB c + C.$
Hence, there exists $R > 0$ and $\varepsilon >0$
such that, if $|s| < \varepsilon$
and  $c \in \Lambda:= \{ c \in \C : |AB c + C|\le 4 \}$,
we have that $g_{s,c}: U_{s,c}' \to \D_R$
is a quadratic like map, where $\D_R:=\{ z \in \C : |z| <R\}$
and $U_{s,c}'$ is the preimage of $\D_R$. Moreover,
$\cG_{s_0}:=\{g_{s_0,c}: c \in \Lambda\}$ is a full and unfolded quadratic like family,
for all $s_0$, in the sense of Lyubich~\cite[\S 4.11]{LyubichHairiness}. The connectedness locus $\cM_s$ of $\cG_s$ moves holomorphically
with $s$, for $|s|<\varepsilon$, and $\cM_0$ is an affine copy of the
Mandelbrot set.  In other words, the straightening of $\cM_s$ is realized
as the limit (as $s \to 0$) of a holomorphic motion \emph{inside} of the moduli space of quadratic rational maps. 

Thus, we have established the following:

\begin{corollary}
  \label{c:mandelbrot}
  Let $x$ be a puncture with associated Puiseux series $\beta_0$ that has a period $q'$ primitive renormalization. For all $r>0$ sufficiently large,
  there exists a (punctured) neighborhood $\cE_x^*$ of $x$ and 
  a
  holomorphic family of critically marked quadratic rational maps $(F_\lambda,\omega)$ 
  parametrized by $\cE^*_x \times \D_r$ such that all of the following hold:

  \begin{enumerate}
  \item $\{ [(F_\lambda,\omega)] \in \cM^{cm}_2 : \lambda \in \cE^*_x \times \D_r\}$ is a neighborhood of $\cE_x^*$ and $\cE_x^* = \{ [(F_\lambda,\omega)] \in
    \cM^{cm}_2 : \lambda \in \cE^*_x \times \{c_0\}\}$ for some $c_0$ such that
    $z=0$ has period $p/q'$ under $Q_{c_0}$.
  \item There exist continuously varying smooth Jordan domains $U'_\lambda, U_\lambda$ such that
    $F^{\circ q'}_\lambda : U'_\lambda\to U_\lambda$ is a quadratic like
    map, for all $\lambda \in \cE^*_x \times \D_r$.
  \item The family
    of quadratic like maps
    $(g_\lambda):=(F^{\circ q'}_\lambda : U'_\lambda\to U_\lambda)$ extends analytically
    to $\cE_x \times \D_r$ by the quadratic family. Specifically,
    for all $(x,c) \in \{x\} \times \D_r$,  we may defined
    $g_{(x,c)}$ as a quadratic like
    restriction of $Q_c$ and obtain an analytic family parametrized by $\cE_x \times \D_R$.
  \end{enumerate}  
\end{corollary}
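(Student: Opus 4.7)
The plan is to promote the formal Puiseux series statement in Proposition~\ref{p:parameter-poly}(3) to a genuine complex analytic family of quadratic-like maps, and then invoke the theory of unfolded quadratic-like families to exhibit the Mandelbrot copy as a holomorphic motion inside $\cM^{cm}_2$.

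First, I would write down the local parametrization $(s^{\pm\mu}, b(s))$ of $\cE_x$ associated to the Puiseux series $\beta_0$, and use the exponents $\rho_0, \rho_1 \in \Q$ coming from $\diam(X_0), \diam(X_1)$ to choose a common denominator $m_0$ so that $s^{m_0\mu n_0}$ and $s^{m_0\mu n_1}$ are honest power-series coordinates. Then, following the setup already sketched in the excerpt, I define
\begin{eqnarray*}
a(s) &:=& s^{\pm \mu m_0}, \\
b_c(s) &:=& b(s^{m_0}) + s^{\mu n_1} c, \\
M_s(z) &:=& A^{-1} s^{\mu n_0} z,
\end{eqnarray*}
and set $g_{s,c}(z) := M_s^{-1} \circ f_{a(s), b_c(s)} \circ M_s(z)$. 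The key analytic input comes from Proposition~\ref{p:parameter-poly}(3), applied after the dictionary from the Puiseux series setting to convergent power series: it identifies the reduction of $M_0^{-1} \circ \varphi_{\mu_0(\gamma)}^{\circ q'} \circ M_0$ with the polynomial $A\tilde z^2 + B\tilde\gamma + C$. Reinterpreting reduction as the limit $t \to 0$, this translates into $g_{s,c}(z) \to z^2 + ABc + C$ uniformly on compact subsets of $\C$ as $s \to 0$, with $c$ ranging over a compact set.

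Next, I would set $\Lambda := \{c \in \C : |ABc + C| \le 4\}$ and fix $R > 0$ large enough that, for $c \in \Lambda$, the map $z \mapsto z^2 + ABc + C$ admits a canonical quadratic-like restriction $U'_{0,c} \to \D_R$ whose connectedness locus is an affine copy of the Mandelbrot set. By the uniform convergence above, for all sufficiently small $|s|$ and all $c \in \Lambda$ the map $g_{s,c}$ likewise admits a quadratic-like restriction $U'_{s,c} \to \D_R$, and standard compactness arguments give continuity of $U'_{s,c}$ in $(s,c)$. Conjugating back by $M_s$, this quadratic-like restriction of $g_{s,c}$ becomes a quadratic-like restriction of $F^{\circ q'}_\lambda$ where $\lambda = (s,c)$, yielding the domains $U'_\lambda \subset U_\lambda$ of statement (2). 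Statement (1) follows because the full family $(a(s), b_c(s))$ together with $\omega = 0$ sweeps out a bidisk neighborhood of $\cE_x^*$ in $\cM^{cm}_2$, with $\cE_x^*$ itself recovered at $c = c_0$ where $c_0$ is the critical parameter in the quadratic family for which $z=0$ has period $p/q'$ (this is forced by the period count in Proposition~\ref{p:parameter-poly}(3)).

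Finally, for statement (3) I would observe that at $s = 0$ the formulas extend naturally: setting $g_{0,c} := z^2 + ABc + C$, the full family $\{g_{s,c}\}_{(s,c) \in \cE_x \times \D_r}$ is jointly holomorphic (the coefficients are convergent power series in $s$), and the domains $U'_{s,c}, U_{s,c}$ vary holomorphically. Thus we obtain a full, unfolded quadratic-like family in the sense of Lyubich~\cite[\S 4.11]{LyubichHairiness}, and Douady--Hubbard straightening gives a holomorphic motion of the connectedness locus $\cM_s$, with $\cM_0$ an affine copy of the Mandelbrot set. The anticipated main obstacle is verifying uniform analytic control of the quadratic-like domains $U'_{s,c}$ as $s \to 0$ across the entire parameter slice $\Lambda$; this requires knowing that the convergence $g_{s,c} \to g_{0,c}$ is not merely pointwise on compact sets in $z$ but holomorphic in $(s,c,z)$, which ultimately reduces to the convergence of the Puiseux expansion for $b(s)$ (guaranteed by the proof of Lemma~\ref{l:periodic-correspond}) combined with the explicit form of $M_s$ and $b_c(s)$.
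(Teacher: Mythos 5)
The proposal is correct and follows essentially the same route as the paper: it sets up the rescaled family $g_{s,c}=M_s^{-1}\circ f_{a(s),b_c(s)}\circ M_s$, interprets the reduction in Proposition~\ref{p:parameter-poly}(3) as the limit $s\to 0$ to get uniform convergence to $z^2+ABc+C$, and invokes Lyubich's full unfolded quadratic-like families together with straightening. Your explicit flagging of joint holomorphy in $(s,c,z)$ (rather than mere locally uniform convergence for fixed $c$) as the remaining technical point is appropriate and is indeed what the convergent power-series nature of the local parametrization $b(s)$ supplies.
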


\subsection{Proof of (Main) Lemma~\ref{main-puiseux-l}}
\label{s:primitive-r}
% \begin{lemma}
%   Let  $\hat\beta$ be such that $\varphi_{\hat\beta}$ has a period $p$ critical point in a period $q'|p$ critical ball.
%   Then,
%   $$\|\dfrac{d \varphi^{p-1}_{\hat\beta}}{dz}(0) \| = \dfrac{\hat{r}_0}{\hat{r}_1}.$$
% \end{lemma}

% \begin{proof}
% Let $x_j$ be the boundary point of $X_j$.
% For all $j=1, \dots, p-1$,
% the direction in $T_{x_j} \poneberk$ that contains 
% $\upomega_j ({\beta_0})$
% maps with degree $1$ onto its image direction by the tangent map, since is critical point free. For otherwise, this direction, which is a Fatou component,
% would have two distinct (super)attracting periodic points, leading to a contradiction.
% Hence, $$\|\varphi_{\hat\beta} (\upomega_j(\hat\beta))\| = \dfrac{\hat{r}_{j+1}}{\hat{r}_j},$$
% and the lemma follows.
% \end{proof}

    Assume that $\beta_0$ is a parameter in an order $q$ limb such
    that $\upomega$ is periodic of  period $p$ under $\varphi_{\beta_0}$.
    We must show that:
    $$\|\partial_\beta \upomega_{p}(\beta_0) \| = \val{\tau}^2 \cdot
    \nor{\partial_z \varphi^{\circ p-1}_{\beta_0} (\infty)}.$$
    This was proved, when  $\beta_0$ is a satellite parameter, in Lemma~\ref{l:satellite}.
    Thus, we may assume that $\varphi_{\beta_0}$ has a
    periodic ball $X_0$ containing $\upomega$ of period $q'>q$
    dividing $p$ (Proposition~\ref{p:x}).
In this case, 
    the lemma is ultimately a consequence of the simplicity of the roots of Gleason polynomials. 
    Consider $M_0$, $\mu_0$, $A,B,C$ as in
    Proposition~\ref{p:parameter-poly}.
    Let $$Q_\tgamma (z) := A z + B\tgamma + C.$$
  Then,
  \begin{eqnarray*}
    \rd(\partial_{\gamma} (M_0^{-1} \circ \upomega_p(\mu_o(\gamma)))|_{\gamma=0}) &=&
                                                                     \dfrac{dQ^{\circ p/q'}_{\tgamma}}{d\tgamma}(0)\\
    & \neq & 0.
  \end{eqnarray*}
  Indeed, the first line is obtained changing the order of
  reduction and derivation. The second line is the aforementioned result by Gleason~(Lemma \ref{l:transversality}).
By the chain rule:
$$1 = \dfrac{1}{\diam(X_0)} \nor{\partial_\beta \upomega_p(\beta_0)} \diam(X_1) \val{\tau}^{-2}.$$
From Lemma~\ref{c:zder},
$$\val{\tau}^2 \cdot \nor{\partial_z \varphi^{\circ p-1}_{\beta_0} (\infty)} = \nor{\partial_\beta \upomega_p(\beta_0) }.$$

\qed

\bibliographystyle{alpha}
%\bibliography{/Users/jkiwi/Dropbox/Papers/index2020}

\end{document}